\newtheorem{Theorem}{Theorem}[section]
\newtheorem{algo}[Theorem]{Algorithm}
\newtheorem{Lemma}[Theorem]{Lemma}
\newtheorem{Corollary}[Theorem]{Corollary}
\newtheorem{Proposition}[Theorem]{Proposition}
\newtheorem{Definition}[Theorem]{Definition}
\newtheorem{Example}[Theorem]{Example}
\DeclareMathOperator{\init}{in}
\DeclareMathOperator{\Gr}{Gr}
\DeclareMathOperator{\GF}{GF}
\DeclareMathOperator{\GR}{GR}
\DeclareMathOperator{\gr}{gr}
\DeclareMathOperator{\trop}{Trop}
\DeclareMathOperator{\conv}{conv}
\DeclareMathOperator{\supp}{supp}
\DeclareMathOperator{\cone}{cone}
\DeclareMathOperator{\spec}{Spec}
\DeclareMathOperator{\proj}{Proj}
\DeclareMathOperator{\val}{\mathfrak v}
\newcommand{\RR}{\mathbb{R}}
\newcommand{\BB}{\mathbb{B}}
\definecolor{caribbeangreen}{rgb}{0.0, 0.8, 0.6}
\definecolor{capri}{rgb}{0.0, 0.75, 1.0}
\title{A survey on toric degenerations of projective varieties}
\author{Lara Bossinger}
\address{
Instituto de Matem\'aticas Unidad Oaxaca, 
Universidad Nacional Aut\'onoma de M\'exico,
Le\'on 2, altos, 
Centro Hist\'orico,
68000 Oaxaca,
Mexico}
\email{lara@im.unam.mx}
\date{}
\begin{document}

\maketitle

\begin{abstract}
In this survey we summarize the constructions of toric degenerations obtained from valuations and Gröbner theory and describe in which sense they are equivalent. 
We show how adapted bases can be used to generalize the classical Newton polytope to what we call a $\mathbb B$-Newton polytope.
The $\mathbb B$-Newton polytope determines the Newton--Okounkov polytopes of all Khovanskii-finite valuations sharing the adapted standard monomial basis $\mathbb B$.
\end{abstract}

\section{Introduction}\label{sec:intro}

Toric varieties are popular objects in algebraic geometry due to a dictionary between their geometric properties (e.g. dimension, degree)  and properties of associated combinatorial objects (e.g. fans, polytopes). This dictionary can be extended from toric varieties to varieties admitting a \emph{toric degeneration}. A toric degeneration is a (flat) family of varieties that share many properties with each other (e.g. dimension, degree, Hilbert-polynomial). This family contains the variety we are interested in, a toric variety, so properties of our variety can be read from the combinatorics of the toric variety.

The study of toric degenerations has various applications in pure and applied mathematics, e.g. in probability, statistics, and mathematical biology.
Tailored to the variety of interest, it is a great challenge to decide which toric degeneration has the desired properties. The task is therefore to study and compare possible constructions.

\medskip
Besides its applications in classical algebraic geometry, toric degenerations have proven to be useful in several other subjects such as 
\begin{itemize}
    \item Symplectic geometry \cite{NNU2,Hamilton_Kaehler,FLP, HP, HK, Kaveh_toric}, 
    \item Newton--Okounkov bodies \cite{LM09,KK12,KueL},
    \item Representation theory \cite{GL96,Cal02,AB04, KM05,Hoew_Branching, FFL_PBWtypeA,Kaveh_crystal, FFL_birat},
    \item Mirror symmetry \cite{Giv_Toda,BCFvSK,Batyrev_toric,GrossSiebert_toric,FOOO_toric, ACGK_mutation, Nishinou_Gromov-Witten},
    \item Cluster algebras \cite{GHKK14,BFFHL,RW17,BFMN},
    \item Numerical and computational algebraic geometry \cite{Collart_toricDeg,BLMM,BSW_numerical}
    \item Algebraic statistics \cite{KMS_quasisymmetry,Daniel_trees}
\end{itemize}
The above list and the included citations are far from being complete as the subject is broad and new applications are discovered on a regular basis. I apologize if I have missed your favorite paper using toric degenerations and I would be happy to receive emails with hints to more exiting applications.

The aim of this survey is to describe two main constructions of toric degenerations and how they are related. 
In particular, we focus on the constructions from valuations which go back to Anderson \cite{An13} and those from Gröbner theory or the tropicalization of an ideal.
In practice the \emph{bridges} connecting one construction to another are particularly useful as each approach has its own benefits and shortcomings.

\medskip
To be more precise consider a projective variety $X$. A {\bf toric degeneration of $X$} is a flat morphism $\phi:\mathfrak X\to \mathbb A^1$ that trivializes away from the fibre over $0\in \mathbb A^1$
\[
\xymatrix{
\mathfrak X\setminus \phi^{-1}(0)\ar[rr]^\sim  \ar[dr]_\phi& &  X\times \mathbb A^1\setminus\{0\}\ar[dl]\\
& \mathbb A^1\setminus\{0\}
}
\]
We will work with $T$-equivariant toric degenerations, that is we assume that the action on the central fibre is an extension of the torus action on $X$. 

\medskip
\noindent
{\bf Outline:} We summarize the background on valuations in \S\ref{sec:val} and on Gröbner theory and tropicalization of ideals in \S\ref{sec:initial ideals and trop}. In \S\ref{sec:val and trop} we explain the equivalence of the constructions of toric degenerations from valuations and from the tropicalization of an ideal \cite{KM16,Bos20_fullrank}. In \S\ref{sec:gröbner?} we consider more general \emph{algebraic toric degenerations}
and under which circumstances they can be obtained as the toric degenerations from a valuation \cite{KMM_dgeneration-cx1}.
In \S\ref{sec:adapted} we explain the importance of adapted bases for toric degenerations. In particular, in \S\ref{sec:polytopes from adapted} we show how adapted bases give rise to $\mathbb B$-Newton polytopes that project to all Newton--Okounkov polytopes of valuations that share an adapted basis.
In \S\ref{sec:wall-crossing} we recall the definition of wall-crossing formulas for Newton--Okounkov polytopes \cite{EH-NObodies}. We review this notion from a more geometric point of view in the context of flat families incorporating various toric degenerations in \S\ref{sec:families of gröbner}, \cite{BMN}. 
In \S\ref{sec:Gr36} we elaborate on the example of the Grassmannian $\Gr_3(\mathbb C^6)$.

\medskip
\noindent
{\bf Acknowledgements:} 
I would like to thank the editors of the proceeding for the opportunity to contribute with a survey on my favorite topic.
The ideas of \S4.1 arose in the context of Newton--Okounkov bodies for cluster varieties in collaboration with Man-Wai Cheung, Timothy Magee and Alfredo Nájera Chávez. I am grateful for all the inspiring discussions over the course of our collaboration.

\section{Preliminaries}\label{sec:prelim}
In this section we introduce the main tools used in this article to construct toric degenerations. In particular we review background on valuations and Newton--Okounkov bodies as well as background on initial ideals, Gröbner theory and the tropicalization of an ideal.
We use the maximum convention throughout the paper which might imply slight differences in the definitions (mostly just a sign) in comparison to the original articles cited.

\subsection{Valuations}\label{sec:val}

Let $k$ be an algebraically closed field of characteristic zero.
Throughout the paper we denote by $A$ a {\bf positively (multi-)graded algebra and domain}, that is $A=\bigoplus_{w\in \mathbb Z^m_{\ge 0}}A_w$.
Let $\Gamma$ be an abelian group that is totally ordered by $<$.
By a {\bf (Krull) valuation} on $A$ we mean a map $\nu:A\setminus \{0\}\to \Gamma$ that satisfies for all $a,b\in A$ and $c\in k$
\[
\nu(ab)=\nu(a)+\nu(b), \quad \nu(a+b)\le \max\{\nu(a),\nu(b)\},\quad \nu(ca)=\nu(a).
\]
If $\nu$ only satisfies $\nu(ab)\le \nu(a)+\nu(b)$ it is called a {\bf quasivaluation}.
Notice that the image of a valuation $\nu$ carries the structure of an additive semigroup. It is therefore called the {\bf value semigroup} of $\nu$ and we denote it by $S(A,\nu)$.
The {\bf rank} of $\nu$ is defined as the rank of the group completion of its semigroup inside $\Gamma$, $\nu$ is said to have {\bf full rank} if its rank coincides with the Krull dimension of $A$.
Every valuation induces a filtration on $A$ with filtered pieces for $\gamma \in \Gamma$ defined by
\[
\mathcal F_{\nu,\gamma}:=\{a\in A:\nu(a)\ge \gamma \}\quad  \left(\text{resp. }\mathcal F_{\nu,>\gamma}:=\{a\in A:\nu(a)> \gamma\}\right).
\]
The {\bf associated graded algebra} is $\gr_\nu(A):=\bigoplus_{\gamma\in \Gamma} \mathcal F_{\nu,\gamma}/\mathcal F_{\nu,>\gamma}$.
There is a natural quotient map of vector spaces from $A$ to $\gr_\nu(A)$ given by sending $f\in A$ to $\mathcal F_{\nu,\nu(f)}/\mathcal F_{\nu,>\nu(f)}$, denote its image by $\hat f\in \gr_\nu(A)$. Note that $\nu(fg)=\nu(f)+\nu(g)$ implies that $\widehat{fg}=\hat{f}\hat{g}$.
If the quotients $F_{\nu,\gamma}/\mathcal F_{\nu,>\gamma}$ are at most one-dimensional, then we say $\nu$ has {\bf one-dimensional leaves}.
This property is desirable as it gives an identification
\[
\gr_\nu(A)\to k[S(A,\nu)], \quad \text{given by} \quad \hat f_\gamma \mapsto \nu(f_\gamma),
\]
where $\hat f_\gamma\in \mathcal F_{\nu,\gamma}/\mathcal F_{\nu,>\gamma}$ is a generator and $f_\gamma\in A$ lies in the preimage of $\hat f_\gamma$ under the quotient map $\hat{}:A\to \gr_\nu(A)$.
It is a consequence of Abhyankar's inequality that full-rank valuations have one-dimensional leaves.

An important definition is the notion of a {\bf Khovanskii basis} for a valuation $\nu$: that is a subset $B$ of $A$ whose image in $\gr_\nu(A)$ is an algebra generating set. 
It is not hard to see that if $B$ is a Khovanskii basis for $\nu$ then the set $\{\nu(b):b\in B\}$ generates the value semigroup \cite[Lemma 2.10]{KM16}.

A valuation is called {\bf homogeneous} if it respects the grading on $A$, more precisely if $f\in A$ has homogeneous presentation $\sum_{i} f_i$ then $\nu(f)=\max\{\nu(f_i)\}$.
A valuation is {\bf fully homogeneous} if $\nu(f)=(\deg(f),\nu'(f))$, that is $S(A,\nu)\subset \mathbb Z_{\ge 0}^m\times \Gamma'$.
Any homogeneous valuation is obtained from a fully homogeneous one by composing with an isomorphism of semigroups \cite[Remark 2.6]{IW20}. So when studying homogeneous valuation we may without loss of generality assume they are fully homogeneous.

Given a fully homogeneous valuation $\nu:A\setminus \{0\}\to \mathbb Z^m_{\ge 0}\times \Gamma'$ we define its {\bf Newton--Okounkov cone}
\begin{equation}\label{eq:NOcone}
C(A,\nu):=\overline{\cone(S(A,\nu))}=\overline{\cone(\nu(f):f\in A)},    
\end{equation}
where the closure (in the Euclidean topology) is taken inside $(\mathbb Z^m_{\ge 0}\times \Gamma')\otimes_{\mathbb Z} \mathbb R$.
Let $\Gamma'_{\mathbb R}=\Gamma'\otimes_{\mathbb Z}\mathbb R$.
The {\bf Newton--Okounkov body} of $\nu$ is then defined as the intersection
\begin{equation}\label{eq:NObody}
\Delta(A,\nu):=C(A,\nu) \cap \{(1,\dots,1)\}\times \Gamma'_{\mathbb R},    
\end{equation}
where $(1,\dots,1)$ denotes the element whose entries are all one in $\mathbb Z^m_{\ge 0}$.
The definition was introduced independently by Lazarsfeld--Mustata \cite{LM09} and Kaveh--Khovanskii \cite{KK12} who based their work on a construction of Okounkov \cite{Oko98}.
Newton--Okounkov bodies far generalize Newton polytopes of polynomials and carry a lot of information about the algebra $A$ or the (weighted)projective variety $X=\proj(A)$\footnote{Recall, that the projective spectrum of the $\mathbb Z^m_{\ge 0}$-graded polynomial ring whose generators have degrees $d_ie_i$, $\{e_1,\dots,e_m\}$ being the standard basis of $\mathbb Z^m$, is the weighted projective space $\mathbb P(d_1,\dots,d_m)$. In particular, if $A$ is multigraded, $\proj(A)$ can be seen as a subvariety of a weighted projective space. For details we refer to \cite{Dolgachev_wtProjVar}.}. 

\begin{Theorem}[Corollary 3.2 \cite{KK12}]
Let $X=Proj(A)$ and $\nu:A\setminus \{0\}\to \mathbb Z^d$ be a full-rank homogeneous valuation. Then the dimension $q$ of $\Delta(A,\nu)$ coincides with the dimension of $X$ and moreover, the $q$-dimensional integral volume of $\Delta(A,\nu)$ multiplied by $q!/{\rm ind}(S(A,\nu))$ is the degree of $X$, where ${\rm ind}(S(A,\nu))$ refers to the index of the sublattice spanned by $S(A,\nu)$ inside $\mathbb Z^d$.
\end{Theorem}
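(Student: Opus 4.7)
My plan is to reduce the statement to a Hilbert-function/lattice-point count, using that a full-rank valuation has one-dimensional leaves (by Abhyankar) and that homogeneity lets us assume $\nu$ is fully homogeneous. The idea is that the Hilbert polynomial of $A$, hence the dimension and degree of $X=\proj(A)$, can be read off from the graded pieces of the value semigroup, and those in turn are governed asymptotically by the Newton--Okounkov body.

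\medskip

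First I would note that since $\nu$ has full rank it has one-dimensional leaves, so the quotient map induces an isomorphism $\gr_\nu(A)\cong k[S(A,\nu)]$ as recalled in the paper. Because $\nu$ is fully homogeneous we have $S(A,\nu)\subset \mathbb Z^m_{\ge 0}\times \Gamma'$, and this isomorphism is compatible with the $\mathbb Z^m_{\ge 0}$-grading. For simplicity I would first treat $m=1$; then for every $k\in\mathbb Z_{\ge 0}$ the filtration $\mathcal F_{\nu,\cdot}$ restricted to $A_k$ has successive quotients of total dimension
\[
\dim_k A_k \;=\; \#\bigl(S(A,\nu)\cap(\{k\}\times \Gamma')\bigr).
\]
From the definitions \eqref{eq:NOcone}--\eqref{eq:NObody}, the slice of $C(A,\nu)$ at height $k$ is $k\cdot\Delta(A,\nu)$, so the $k$-th graded piece of $S(A,\nu)$ consists of the lattice points of $S(A,\nu)$ contained in the dilate $k\cdot \Delta(A,\nu)$ (up to boundary contributions that are negligible asymptotically).

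\medskip

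Next I would invoke the asymptotic count for semigroups sitting in a cone: if $S\subset \mathbb Z_{\ge 0}\times \mathbb Z^{d-1}$ is a finitely generated (or more generally strongly convex) semigroup whose convex hull has a $q$-dimensional slice $\Delta$ at height one, then
\[
\#\bigl(S\cap (\{k\}\times \mathbb R^{d-1})\bigr)\;=\;\frac{\mathrm{vol}_q(\Delta)}{\mathrm{ind}(S)}\,k^q\;+\;O(k^{q-1}),
\]
which is the lattice-point/Ehrhart-style estimate underlying the Kaveh--Khovanskii theory (their ``Theorem on volumes of projective varieties''). Combining this with the previous display gives
\[
\dim_k A_k \;=\;\frac{\mathrm{vol}_q(\Delta(A,\nu))}{\mathrm{ind}(S(A,\nu))}\,k^q\;+\;O(k^{q-1}).
\]
On the other hand, the Hilbert polynomial of $X=\proj(A)$ satisfies $\dim_k A_k=\tfrac{\deg X}{(\dim X)!}k^{\dim X}+O(k^{\dim X -1})$. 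Matching leading terms forces $q=\dim X$ and $\mathrm{vol}_q(\Delta(A,\nu))\cdot q!/\mathrm{ind}(S(A,\nu))=\deg X$.

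\medskip

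The main obstacle is the asymptotic lattice-point count in the second step: one must control boundary effects and the fact that $S(A,\nu)$ need not equal all lattice points in its cone. This is handled by Khovanskii's theorem asserting that any finitely generated semigroup in $\mathbb Z^d$ eventually coincides with the full lattice inside a shifted copy of its cone; combined with a standard Ehrhart estimate for the slices, it yields the required leading-order formula. For the general multigraded case ($m\ge 2$) I would either pass to an appropriate one-parameter sub-grading via a positive linear functional separating the grading cone, or apply the same argument slice-by-slice along the degree lattice and sum; both routes reduce to the rank-one situation treated above.
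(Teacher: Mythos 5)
The paper does not prove this statement at all: it is quoted verbatim as Corollary 3.2 of [KK12], so there is no internal argument to compare with. Your proposal reconstructs exactly the Kaveh--Khovanskii proof strategy, and it is essentially sound: full rank gives one-dimensional leaves, homogeneity lets you identify $\dim_k A_k$ with the number of semigroup points at level $k$, and the growth theorem for graded semigroups plus the Hilbert polynomial of $X$ matches leading terms. Two small caveats. First, your asymptotic $\#\bigl(S\cap(\{k\}\times\mathbb R^{d-1})\bigr)=\frac{\mathrm{vol}_q(\Delta)}{\mathrm{ind}(S)}k^q+O(k^{q-1})$ claims more than is available in general: $S(A,\nu)$ need not be finitely generated and $\Delta(A,\nu)$ need not be a rational polytope, so an Ehrhart-type error term is not justified; what Kaveh--Khovanskii actually prove (by approximating from within by finitely generated subsemigroups, using Khovanskii's theorem on finitely generated semigroups as you indicate) is the limit statement $\lim_k \#S_k/k^q=\mathrm{vol}_q(\Delta)/\mathrm{ind}(S)$, and that limit is all your leading-term comparison needs, so you should weaken the display accordingly. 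Second, the multigraded reduction in your last paragraph is only sketched; since the cited Corollary 3.2 is stated for the $\mathbb Z_{\ge 0}$-graded case (and the survey's degree/volume statement is the standard one), it is cleaner to either restrict to $m=1$ or spell out the choice of positive linear functional and check that it preserves both the Hilbert-function asymptotics and the normalization of the volume.
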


In general the Newton--Okounkov body of a valuation need not be bounded nor polyhedral, but they are convex. Computing them is in general challenging, but much simplified when the valuation posses a finite Khovanskii basis as we will see in what follows.

\subsubsection{Khovanskii-finite valuations}
A {\bf Khovanskii-finite valuation} is a homogeneous (Krull) valuation of full rank whose value semigroup is finitely generated. In particular, Khovanskii-finite valuations have finite Khovanskii bases.
The concept was introduced and studied in great detail by Ilten and Wrobel in \cite{IW20}.

\medskip
The existence of Khovanskii bases has computational advantages.
Given a Khovanskii basis $\{b_1,\dots,b_n\}$ for $\nu$ we may represent $\gr_\nu(A)$ as a quotient of a polynomial ring $S:=k[x_1,\dots,x_n]$. Define
\[
\pi_\nu:k[x_1,\dots,x_n] \to \gr_\nu(A) \quad \text{by} \quad x_i\mapsto b_i.
\]
Then $I_\nu:=\ker(\pi_\nu)$ gives $S/I_\nu\cong \gr_\nu(A)$.

We say the value semigroup $S(A,\nu)\subset (\Gamma,<)$ is {\bf minimum well-ordered} if every subset of $S(A,\nu)$ has a unique minimal element with respect to $<$. In this case by \cite[Proposition 2.13]{KM16} the following version of the subduction algorithm terminates in finite time.

\begin{algo}
Let $A$ be positively graded algebra and domain, $\nu:A\setminus \{0\}\to (\Gamma,<)$ full-rank homogeneous Khovanskii-finite valuation with minimum well-ordered $S(A,\nu)$ with $\{b_1,\dots,b_n\}$ a Khovanskii basis.
\begin{itemize}
    \item[{\bf Input:}] $f\in A\setminus \{0\}$;
    \item[{\bf Output:}] a polynomial expression of $f$ in terms of $\{b_1,\dots,b_n\}$.
\end{itemize}
\begin{enumerate}
    \item As $\bar b_1,\dots, \bar b_n$ generate $\gr_\nu(A)$ we may find a polynomial expression for $\bar f$ in terms of $\bar b_1,\dots, \bar b_n$: $\bar f=p(\bar b_1,\dots,\bar b_n)$, here $p\in \pi_\nu^{-1}(\bar f)$.
    \item We distinguish two cases
    \begin{itemize}
        \item[a.] If $f=p(b_1,\dots,b_n)$ then {\bf output} $p$;
        \item[b.] If $\nu(f-p(b_1,\dots,b_n))<\nu(f)$ replace $f$ by $f-p(b_1,\dots,b_n)$ and go back to Step 1.
    \end{itemize} 
\end{enumerate}
\end{algo}

In particular, every Khovanskii basis is a generating set for the algebra.

\subsection{Initial ideals and tropicalization}\label{sec:initial ideals and trop}
Our second tool box for toric degenerations comes from Gröbner theory. For more detailed information we refer to \cite{HH_monomial,CLO_commut,Eisenbud,Stu96}.

\medskip
For $m\in \mathbb Z^n_{\ge 0}$ we write $x^m:=x_1^{m_1}\cdots x_n^{m_n}\in k[x_1,\dots,x_n]$.
A total order on the set of monomials in $S:=k[x_1,\dots,x_n]$ is a {\bf term order} if it satisfies:
\[
(i)\  1 < x^m\  \forall m\in \mathbb Z^n_{\ge 0} \setminus \{0\} \quad \text{and} \quad (ii)\  x^a<x^{b} \Rightarrow x^{a+c}<x^{b+c}\ \forall a,b,c\in \mathbb Z^n_{\ge 0}.
\]
The {\bf leading term} of an element $f=\sum c_ax^a\in S$ with respect to a term order $<$ is $\init_<(f)=c_bx^b:=\max_<\{c_ax^a:c_a\not =0\}$, where $c_b$ is called the {\bf leading coefficient} and $x^b$ is called the {\bf leading monomial}.
For an ideal in $I\subset S$ we define its {\bf initial ideal with respect to $<$} as 
\[
\init_<(I):=(\init_<(f):f\in I).
\]
The initial ideal is finitely generated and a generating set $G$ of $I$ that satisfies $(\init_<(g):g\in G)=\init_<(I)$ is called a {\bf Gr\"obner basis of $I$ with respect to $<$}.
Every ideal possesses only a finite number of distinct initial ideals \cite[Theorem 1.2]{Stu96}.
It has been shown by Mora and Robbiano that the initial ideals can be organized in a polyhedral fan \cite{MoraRobbiano}. 
To see how, we need the notion of initial ideals with respect to weight vectors:
fix $w\in \mathbb R^n$, we call it a {\bf weight vector} and define the {\bf initial form} of an element $f=\sum c_ax^a$ with respect to $w$ as 
\[
\init_w(f) =\sum_{b:\  w\cdot b=\max\{w\cdot a:c_a\not =0\}}c_bx^b.
\]
Notice that depending on $w$ and $f$ the initial form $\init_w(f)$ is not necessarily just one term.
Similarly, we define the {\bf initial ideal} of $I$ {\bf with respect to $w$} as $\init_w(I):=(\init_w(f):f\in I)$.
For any weight vector $w$ we may define the {\bf homogenization} of $I$ in $k[x_1,\dots,x_n,t]$: for a single element $f=\sum c_ax^a$ we set
\[
f^{h;w}:=\sum c_ax^a t^{\max\{w\cdot b:c_b\not =0\}-w\cdot a}.
\]
Similarly, for the ideal $I$ we define $I^{h;w}:=(f^{h;w}:f\in I)$. The homogenization of $I$ is a family of deformations of $I$ and the quotient algebra $A^{h;w}:=k[x_1,\dots,x_n,t]/I^{h;w}$ is a free $k[t]$-module \cite[\S15.8]{Eisenbud}.
Let $A^w:=S/\init_w(I)$.
The degeneration of $\spec(A)$ to $\spec(A^w)$ defined by $\spec(A^{h;w})$ is called a {\bf Gröbner degeneration}.

Given the ideal $I$ any term order can be {\bf represented} by a weight vector in $w\in \mathbb Z^n_{>0}$ (see, e.g. \cite[Lemma 3.1.1]{HH_monomial}), that is $\init_w(I)=\init_<(I)$.
Conversely, a weight vector $w$ belongs to the {\bf Gr\"obner region} $\GR(I)$ if there exists a term order $<$ such that $\init_<(\init_w(I))=\init_<(I)$. 
The Gr\"obner region carries a fan structure, called the {\bf Gröbner fan} $\GF(I)$ that was discovered by Mora and Robbiano in \cite{MoraRobbiano}. 
Two weight vectors $v,w\in \RR^n$ lie in the relative interior of a cone $C$, denoted by  $v,w\in C^\circ$, if and only if $\init_v(I)=\init_w(I)$.
The maximal dimensional cones in $\GF(I)$ correspond to monomial initial ideals associated with term orders on $S$.
These are particularly useful as they induce vector space bases for the quotient algebra $A=S/I$: we call a monomial $x^a$ that is {\em not} contained in $\init_<(I)$ a {\bf standard monomial}. The set $\BB_<:=\{\bar x^a\in A:x^a\not \in \init_<(I)\}$ is a vector space basis of $A$ called a {\bf standard monomial basis}.
In fact, if $w\in C$ for some maximal cone $C\in \GF(I)$ associated to $<$, then $\BB_<$ is a basis for the free $k[t]$-module $A^{h;w}$, see e.g. \cite[Proof of Theorem 15.17]{Eisenbud}.

The Gr\"obner fan has an interesting subfan that will lead us back to valuations on $A$: we define the {\bf tropicalization} of $I$:
\[
\trop(I):=\{w\in \GR(I): \init_w(I) \ \text{ does not contain monomials}\}.
\]
The dimension of $\trop(I)$ coincides with the Krulldimension of $A$ \cite{EinsiedlerKapronovLind_dimTrop}.
We may in fact reduce our attention to homogeneous ideals
according to \cite[Lemma 4]{ComputingTrop_BJSST}.
For homogeneous ideals we have $\GR(I)=\RR^n$ by \cite[Proposition 1.12]{Stu96}.
In this case the tropicalization is a pure fan whose dimension coincides with the Krull-dimension of $A$.
Moreover, $\trop(I)$ and $\GF(I)$ have a linear subspace $\mathcal L_I$, called the {\bf lineality space} which consists of elements $w\in \mathbb R^n$ such that $\init_w(I)=I$.
More precisely we have the following straight forward Lemma:

\begin{Lemma}\label{lem:grading and lineality}
Let $I$ be a (multi-)homogeneous ideal inside $S$ with respect to a $\mathbb Z^m_{\ge 0}$-grading given by  $\deg(x_{i_j})=e_i$ 
where $S=k[x_{i_j}:1\le i\le m,1\le j\le k_i]$ for some $k_i$ that satisfy $k_1+\dots+k_m=n$ and $\{e_i:1\le i\le m\}$ is the standard basis of $\mathbb Z^m$.
Then for $1\le i\le m$ we have 
\begin{equation}\label{eq:elements lineality}
    \ell_i:=(0,\dots,0,1,\dots,1,0\dots,0) \in \mathcal L_I
\end{equation}
where the $1$'s appear in the positions $i_1,\dots,i_{k_i}$.
\end{Lemma}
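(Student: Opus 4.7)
The plan is straightforward: reduce the statement to the observation that $\ell_i$ encodes the $i$-th multi-degree.

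First I would record the key arithmetic identity. For any monomial $x^a\in S$ with exponent vector $a=(a_{j_k})$, the dot product is
\[
\ell_i\cdot a\ =\ \sum_{j=1}^{k_i} a_{i_j}\ =\ \deg_i(x^a),
\]
where $\deg_i$ denotes the $i$-th coordinate of the $\mathbb Z^m_{\ge 0}$-multi-degree. This is immediate from the definition of $\ell_i$ in \eqref{eq:elements lineality} and from the assumption $\deg(x_{i_j})=e_i$.

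Next I would derive the consequence for homogeneous elements. If $g\in S$ is multi-homogeneous of multi-degree $(d_1,\dots,d_m)$, then every monomial appearing in $g$ has $\ell_i$-weight exactly $d_i$. Since the initial form $\init_{\ell_i}(g)$ keeps precisely the monomials of maximal $\ell_i$-weight, we get $\init_{\ell_i}(g)=g$.

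Now I would verify both inclusions $\init_{\ell_i}(I)=I$. For $I\subseteq \init_{\ell_i}(I)$: since $I$ is multi-homogeneous, it admits a multi-homogeneous generating set $\{g_1,\dots,g_r\}$, and by the previous step each $g_j=\init_{\ell_i}(g_j)\in \init_{\ell_i}(I)$, so $I\subseteq \init_{\ell_i}(I)$. For $\init_{\ell_i}(I)\subseteq I$: take any $f\in I$ and decompose it into its multi-homogeneous components $f=\sum_\alpha f_\alpha$; because $I$ is multi-homogeneous, every $f_\alpha$ lies in $I$. The initial form $\init_{\ell_i}(f)$ is the sum of those $f_\alpha$ whose $i$-th multi-degree attains the maximum among the components of $f$, which is a sum of elements of $I$ and therefore lies in $I$. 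Since $\init_{\ell_i}(I)$ is generated by such elements, this inclusion follows.

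There is no real obstacle here; the entire argument rests on identifying $\ell_i\cdot a$ with $\deg_i(x^a)$ and on the elementary fact that the initial form of a multi-homogeneous polynomial with respect to its own grading weight is itself.
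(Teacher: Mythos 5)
Your proof is correct: the identification $\ell_i\cdot a=\deg_i(x^a)$, the observation that $\init_{\ell_i}$ fixes multi-homogeneous elements, and the two inclusions via homogeneous generators and homogeneous components give exactly $\init_{\ell_i}(I)=I$, i.e.\ $\ell_i\in\mathcal L_I$. The paper states this lemma as a ``straightforward'' fact without proof, and your argument is precisely the standard one it implicitly has in mind, so there is nothing to add.
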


Among the maximal cones of $\trop(I)$ we may look for {\bf prime cones} whose associated initial ideal is binomial and prime. Hence, their vanishing sets define toric varieties. 
In particular, any Gr\"obner degeneration associated to a weight vector in the interior of a maximal prime cone is in fact a {\bf toric degeneration}.

\begin{Example}\label{example: curve}
Consider $I=(y^2z-x^3+z^3)\subset \mathbb C[x,y,z]$. The lineality space $\mathcal L_I$ is one dimensional generated by $(1,1,1)^T$.
We draw the Gröbner fan $\GF(I)$ modulo $\mathcal L_I$ inside the hyperplane $\{(w_1,w_2,w_3)^T\in \mathbb R^3:w_3=0\}$ in Figure~\ref{fig:example Gfan}.
The one skeleton whose maximal cones correspond to the rays in the above picture is the tropicalization of $I$.
\begin{figure}
    \centering
    \begin{tikzpicture}
\draw[dashed] (-2.5,0) -- (2,0);
\node[right] at (2,0) {\tiny $w_1$};
\draw[dashed] (0,2.5) -- (0,-2.5);
\node[above] at (0,2.5) {\tiny $w_2$};
\node[below] at (-2.5,2.5) {\small $w_3=0$};

\node at (4,0) {$\times \mathcal L_I=\left\langle \left(\begin{smallmatrix}1\\ 1\\ 1 \end{smallmatrix}\right)\right\rangle$};

\draw[fill,magenta, opacity=.3] (0,0) -- (1.5,2.25) to [out=320,in=10] (0,-2) -- (0,0);
\draw[fill,blue,opacity=.2] (0,0) -- (1.5,2.25) to [out=140,in=110] (-2,-1) -- (0,0);
\draw[fill, teal, opacity=.3] (0,0) -- (-2,-1) to [out=295,in=195] (0,-2) -- (0,0);
\draw[thick, blue,opacity=.2] (-2,-1) -- (0,0);
\draw[thick, teal] (0,0) -- (0,-2);
\draw[thick, magenta] (0,0) -- (1.5,2.25);

\node at (-1,1) {\small {$( y^2z )$}};
\node at (1,.5) {\small{$( x^3 )$}};
\node at (-1,-1) {\small{$( z^3 )$}};

\node[left, blue,opacity=.7] at (-2,-1) {\small{$(y^2z+ z^3)$}};
\node[below right, teal] at (0,-2) {\small{$(z^3-x^3) $}};
\node[above right, magenta] at (1.5,2.25) {\small{$ ( y^2z-x^3)$}};
\end{tikzpicture}   
    \caption{The Gröbner fan of $I=(y^2z-x^3+z^3)\subset \mathbb C[x,y,z]$ modulo $\mathcal L_I$, its one-skeleton is $\trop(I)$, and all initial ideals.}
    \label{fig:example Gfan}
\end{figure}
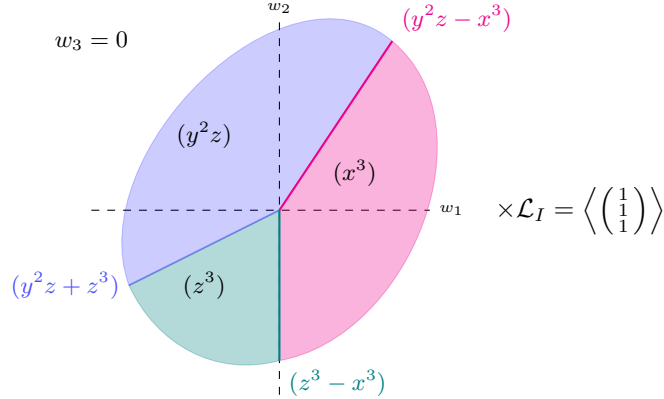

\end{Example}

\subsubsection{Initial ideals with respect to weighting matrices}
Before moving on to the next section we need a bit more background on a slight generalization of initial ideals: a \emph{higher dimensional analogue of Gr\"obner theory} (see e.g. \cite{FR_Hahn_higherRank}).

Recall that $S=k[x_1,\dots,x_n]$ and consider as before $f=\sum c_ax^a\in S$. 
We call a matrix $M\in \mathbb Q^{d\times n}$ a {\bf weighting matrix} and together with a linear order $\prec$ on $\mathbb Z^d$ we define the {\bf initial form} of $f$ with respect to $M$ as
\[
\init_M(f):=\init_{M,\prec}(f):=\sum_{b: \ Mb=\max_{\prec}\{Ma:c_a\not =0\}} c_bx^b.
\]
As before we define the {\bf initial ideal} of an ideal $I\subset S$ {\bf with respect to $M$ (and $\prec$)} as $\init_{M;\prec}(I):=(\init_M(f):f\in I)$.
To simplify notation we will drop the linear order from the index and simply assume that we have fixed it once and for all.
The Gr\"obner region also has a higher dimensional analogue: the {\bf $d^{\text{th}}$ Gr\"obner region} is denoted $\GR^d(I)$ and defined as the set of all weighting matrices $M\in \mathbb Q^{d\times n}$ such that there exists a term order $<$ on $S$ with $\init_<(I)=\init_<(\init_M(I))$.
Given $<$ let $C^d_<\subset \GR^d(I)$ be the set of all $M$ satisfying the previous relation.
We may also define equivalence classes of weighting matrices by setting $C_M:=\{M'\in \GR^d(I):\init_M(I)=\init_{M'}(I)\}$.
In the higher dimensional case several features of Gr\"obner theory still hold, among these the existence of standard monomial bases.
For example, $\GR^d(I)$ always contains the positive orthant $\mathbb Q_{\ge 0}^{d\times n}$ and if $I$ is homogeneous we have $\GR^d(I)=\mathbb Q^{d\times n}$ (see \cite[Lemma 8.7]{KM16} but be aware that the authors are using the minimum convention which introduces a sign).

We may use weighting matrices to define quasivaluations as follows.
Consider the quotient map $\pi:S\to S/I=:A$ and denote by $\bar f$ the coset of $f$ in the quotient. 
For $f=\sum c_a x^a\in S$ set $\tilde \nu_M(f):=\max_\prec\{Ma:c_a\not =0\}$.
This defines a valuation $\tilde \nu_M:S\setminus\{0\}\to \mathbb Z^d$.
By \cite[Lemma 3.2]{KM16} there exists a quasivaluation $\nu_M:A\setminus\{0\}\to (\mathbb Z^d,\prec)$ given for $\bar f\in A$ by
\[
\nu_M(\bar f)=\min_\prec\{\tilde \nu_M(f):f\in \bar f\}
\]
called the {\bf quasivaluation with weighting matrix $M$}. Its associated graded algebra, denoted $\gr_M(A)$, satisfies $\gr_M(A)\cong S/\init_M(I)$.
In particular, this isomorphism gives us standard monomial bases for $\gr_M(A)$: let $<$ be a term order with $M\in C^d_<$. Then $\mathbb B_<$ is a vector space basis for $\gr_M(A)$. 
Moreover, we may use $\mathbb B_<$ to compute the values of $\nu_M$: for $\bar f\in A$ let $\bar f=\sum_{\bar x^b\in \mathbb B_<} c_b\bar x^n$ be its expression in $\mathbb B_<$. Then
\[
\nu_M(\pi( f))=\max_{\prec}\{Mb:c_b\not =0\}.
\]
We explore standard monomial bases and their influence on valuations further in \S\ref{sec:adapted}.

\section{Valuations, tropicalization and toric degenerations}\label{sec:val and trop}

In this section we merge the concepts of Khovanskii-finite valuations and the tropicalization of an ideal.
This section is based on results in \cite{KM16} and \cite{Bos20_fullrank}.

\subsection{Valuations from tropicalization}\label{sec:val from trop}
The main aim of Kaveh and Manon in \cite{KM16} is to establish a connection between the toric degenerations from prime cones in a tropicalization to toric degenerations obtained from Newton--Okounkov polytopes.
It relies on the quasivaluations with weighting matrices introduced above.

As before, let $I\subset S$ be a homogeneous ideal.
Suppose there exists a maximal prime cone $\tau\in\trop(I)$ and choose a basis $r_1,\dots,r_d\in \mathbb Q^n$ for the real vector space spanned by $\tau$. 
The quotient $\tau/\mathcal L_I$ is a strongly convex cone (see e.g. \cite[Lemma 2.13]{BMN}) so we may take a maximal linearly independent set of cosets of primitive ray generators of $\tau/\mathcal L_I$. 
Together with a basis of the lineality space this will be our choice for ${\bf r}:=\{r_1,\dots,r_d\}$.
In particular, we set $r_i=\ell_i$ for $1\le i\le m$, see Lemma~\ref{lem:grading and lineality}.
Define
\[
M_{\bf r}:=(r_{ij})_{1\le i\le d,1\le j\le n}
\]
where $r_{ij}$ is the $j^{\text{th}}$ entry in $r_i$, so the $r_i$ are the rows of $M_{\bf r}$.

\begin{Proposition}[Proposition 4.2 and 4.6 in \cite{KM16}]
If $\tau$ is a maximal prime cone in $\trop(I)$ then quasivaluation with weighting matrix $M_{\bf r}$ is in fact a full rank valuation with one-dimensional leaves.
Its value semigroup is generated by the images of $\bar x_1,\dots,\bar x_n$ and it only depends on $\tau$, not on our choice of ${\bf r}$.
\end{Proposition}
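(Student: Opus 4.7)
The plan is to reduce every claim to a single identification: $\init_{M_{\bf r}}(I)=\init_\tau(I)$. Once this is in place the assertions fall out by applying standard properties of the quasivaluation--initial-ideal dictionary together with the fact that $\tau$ is a prime cone. The main technical work is therefore the first step, and the rest is book-keeping.

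The first step is the perturbation argument. Using the lexicographic tie-breaker on $\mathbb Z^d$, the matrix initial form satisfies $\init_{M_{\bf r}}(f)=\init_{r_1+\varepsilon r_2+\dots+\varepsilon^{d-1}r_d}(f)$ for all sufficiently small $\varepsilon>0$ depending only on the (finitely many) exponents appearing in $f$. Choosing a reduced Gröbner basis for $I$ with respect to some term order refining $M_{\bf r}$, one can pick $\varepsilon$ small enough to work simultaneously for every basis element, and conclude $\init_{M_{\bf r}}(I)=\init_w(I)$ where $w:=r_1+\varepsilon r_2+\dots+\varepsilon^{d-1}r_d$. By construction $w$ lies in the relative interior of $\tau$ (the first $m$ rows $\ell_i$ generate the lineality $\mathcal L_I$ and the remaining rays were chosen in the interior modulo $\mathcal L_I$), so $\init_w(I)=\init_\tau(I)$. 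This is the step I expect to be the main obstacle, since it needs the interaction between the iterated/perturbed description of $\init_{M_{\bf r}}$ and the description of the prime cone $\tau$ in terms of its generators, and requires checking that the $\ell_i$'s being in the boundary of $\tau$ does not spoil interior-ness of $w$.

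With the identification in hand, everything else is immediate. Because $\tau$ is a prime cone, $\init_\tau(I)$ is a prime binomial ideal, so $\gr_{M_{\bf r}}(A)\cong S/\init_{M_{\bf r}}(I)$ is a domain; and a quasivaluation whose associated graded is a domain is automatically a valuation, so $\nu_{M_{\bf r}}$ is a valuation. For full rank, note $\nu_{M_{\bf r}}(\bar x_j)$ is the $j$-th column of $M_{\bf r}$; the columns generate a rank-$d$ subgroup of $\mathbb Z^d$ since $M_{\bf r}$ has rank $d$, and $d=\dim\tau=\dim\trop(I)$ equals the Krull dimension of $A$ by the cited result of Einsiedler--Kapronov--Lind. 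One-dimensional leaves then follow from full rank via the Abhyankar inequality noted in \S\ref{sec:val}. The semigroup statement is the observation that $\{\bar x_1,\dots,\bar x_n\}$ maps onto algebra generators of $\gr_{M_{\bf r}}(A)$, hence is a Khovanskii basis, so $S(A,\nu_{M_{\bf r}})$ is generated by $\nu_{M_{\bf r}}(\bar x_1),\dots,\nu_{M_{\bf r}}(\bar x_n)$ by \cite[Lemma 2.10]{KM16}. Finally, for independence: two admissible choices ${\bf r}$ and ${\bf r}'$ produce matrices with the same row span, hence by Step 1 the same initial ideal $\init_\tau(I)$, hence the same filtration on $A$; the change of basis is an element of $GL_d(\mathbb Q)$ that intertwines the two value semigroups and identifies the two valuations up to a rescaling isomorphism of the ordered group.
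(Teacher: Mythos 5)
The paper does not actually prove this Proposition --- it is imported verbatim from [KM16, Propositions 4.2 and 4.6] (and the identification $\gr_M(A)\cong S/\init_M(I)$ it rests on is the cited Lemma 3.2 there), so there is no in-paper argument for you to diverge from. Your reconstruction follows the same route as the source: reduce everything to $\init_{M_{\bf r}}(I)=\init_\tau(I)$, then use primeness of $\init_\tau(I)$ to make the associated graded a domain (hence the quasivaluation a genuine valuation), get full rank from the columns of $M_{\bf r}$ together with $\dim\trop(I)=\dim_{\rm Krull}A$, one-dimensional leaves from Abhyankar, and the semigroup statement from the variables being a Khovanskii basis. Three points deserve more care than your sketch gives them. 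First, the claim that $w=r_1+\varepsilon r_2+\dots+\varepsilon^{d-1}r_d$ lies in $\tau^\circ$ is not automatic when $\tau/\mathcal L_I$ is non-simplicial and only a linearly independent subset of rays is used: you need the convexity fact that the $d$-dimensional subcone $\cone(r_1,\dots,r_d)\subseteq\tau$ has its relative interior contained in $\tau^\circ$ (a relative interior point lying on a supporting hyperplane of $\tau$ would force the subcone into that hyperplane, contradicting equal dimension). Second, knowing $\init_{M_{\bf r}}(g)=\init_w(g)$ for the finitely many elements of a Gröbner basis only yields $\init_{M_{\bf r}}(I)=\init_w(I)$ because that basis is a Gröbner basis for the refining term order, so its initial forms generate on both sides; this should be said, and it is exactly the content of the cited statement that every matrix initial ideal is realized by a single weight vector. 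Third, in the independence step, ``same row span hence same initial ideal'' is false for weighting matrices in general; what saves you is precisely that both perturbed weights land in $\tau^\circ$, so both matrix initial ideals equal $\init_\tau(I)$ --- and note also that the identity $\nu_{M_{\bf r}}(\bar x_j)=M_{\bf r}e_j$ (used for full rank and for the semigroup generators) silently uses that $\init_\tau(I)$ contains no monomials, since otherwise a representative of $\bar x_j$ of strictly smaller value would put the variable $x_j$ into the initial ideal. With these points filled in, your argument is a faithful version of the Kaveh--Manon proof.
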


Given the Proposition we adopt our notation and set $M_{\tau}:=M_{\bf r}$ and $\nu_\tau:=\nu_{M_\tau}$.
We obtain the following corollary about the associated Newton--Okounkov polytopes:

\begin{wrapfigure}[14]{r}{2cm}
\centering
\begin{tikzpicture}[scale=.8]
    \node[teal] at (.75,5) {\small $S(A,\nu_\tau)$};
    \draw (-.5,0) -- (2.5,0);
    \draw (0,-.5) -- (0,6.5);
    \draw[thick, teal] (1,0) -- (1,3);
    \draw[magenta,thick] (0,0) -- (2.25,0);
    \draw[magenta,thick] (0,0) -- (2.25,6.75);
    \draw[fill,magenta,opacity=.2] (0,0) -- (2.25,0) -- (2.25,6.75) -- (0,0);
    \node[below] at (1,0) {\tiny 1};
    \node[below] at (2,0) {\tiny 2};
    \node[left] at (0,1) {\tiny 1};
    \node[left] at (0,2) {\tiny 2};
    \node[left] at (0,3) {\tiny 3};  
    \node[left] at (0,4) {\tiny 4};
    \node[left] at (0,5) {\tiny 5};
    \node[left] at (0,6) {\tiny 6};  
    \node[teal] at (0,0) {\footnotesize $\times$};
    \node[teal] at (1,0) {\footnotesize $\times$};
    \node[teal] at (1,2) {\footnotesize $\times$};
    \node[teal] at (1,3) {\footnotesize $\times$};    
    \node[magenta] at (1,1) {\tiny $\circ$};
    \node[teal] at (2,0) {\footnotesize $\times$};
    \node[magenta] at (2,1) {\tiny $\circ$};
    \node[teal] at (2,2) {\footnotesize $\times$};
    \node[teal] at (2,3) {\footnotesize $\times$};    
    \node[teal] at (2,4) {\footnotesize $\times$};
    \node[magenta] at (2,5) {\tiny $\circ$};
    \node[teal] at (2,6) {\footnotesize $\times$}; 
\end{tikzpicture}
\end{wrapfigure}

\begin{Corollary}[Corollary 4.7 in \cite{KM16}]
\label{cor:NO columns}
The Newton--Okounkov body of the valuation $\nu_\tau$ is a convex polytope whose vertices are $\nu_\tau(\bar x_1),\dots,\nu_\tau(\bar x_n)$, which are exactly the columns of $M_\tau$. 
Moreover, up to linear isomorphism $\Delta(A,\nu_\tau)$ only depends on $\tau$.
\end{Corollary}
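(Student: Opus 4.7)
The plan is to read both claims off directly from the Proposition just stated, using the description of the Newton--Okounkov cone and the concrete definition of $\nu_{M_\tau}$. First, unwinding the definition of the quasivaluation with weighting matrix gives $\tilde\nu_{M_\tau}(x_j)=M_\tau e_j$ for the coordinate $x_j\in S$; since the monomial $x_j$ is already a single-term lift of $\bar x_j\in A$, one has $\nu_\tau(\bar x_j)=M_\tau e_j$, the $j$-th column of $M_\tau$. By the Proposition these columns generate the value semigroup $S(A,\nu_\tau)$, hence
\[
\cone(S(A,\nu_\tau))=\cone(\nu_\tau(\bar x_1),\dots,\nu_\tau(\bar x_n))
\]
is a finitely generated rational polyhedral cone, already closed in the Euclidean topology; by \eqref{eq:NOcone} it equals $C(A,\nu_\tau)$.

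Next, the choice ${\bf r}$ forces $r_i=\ell_i$ for $1\le i\le m$, so $\nu_\tau$ is fully homogeneous and the first $m$ coordinates of $\nu_\tau(\bar f)$ record the multidegree of $f$. Slicing $C(A,\nu_\tau)$ at $(1,\dots,1)$ in those coordinates as in \eqref{eq:NObody} yields a polytope whose extreme points are the intersections of the slicing hyperplane with the rays through the columns of $M_\tau$; in the convention of this section each coordinate generator $x_j$ is homogeneous of degree one in its grading direction, so these intersections are exactly the columns themselves, proving the first claim.

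For the ``only depends on $\tau$'' assertion, let ${\bf r}'$ be another admissible basis for the span of $\tau$, still with $r'_i=\ell_i$ for $i\le m$. There is a unique $T\in GL_d(\mathbb Q)$ with $T{\bf r}={\bf r}'$, and by construction $T$ is the identity on the first $m$ coordinates. Then $M_{{\bf r}'}=TM_\tau$, whence $\nu_{M_{{\bf r}'}}=T\circ\nu_\tau$, so $T$ carries $C(A,\nu_\tau)$ isomorphically onto $C(A,\nu_{M_{{\bf r}'}})$. Since $T$ fixes the slicing hyperplane, it restricts to a linear isomorphism of Newton--Okounkov bodies.

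The genuine content already lies in the preceding Proposition (full rank, one-dimensional leaves, and the explicit generating set of the value semigroup); given that, the argument above is essentially bookkeeping. The one subtlety worth flagging is that the vertex set of $\Delta(A,\nu_\tau)$ is in general only contained in $\{\nu_\tau(\bar x_j)\}_{j=1}^n$, with equality after possibly discarding columns that lie in the convex hull of the others---the statement should be read accordingly.
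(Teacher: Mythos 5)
The first thing to note is that the survey does not prove this statement at all: it is quoted verbatim from \cite[Corollary 4.7]{KM16}, so your argument can only be measured against the definitions the paper supplies and the standard argument of the cited source. Your skeleton is exactly that natural argument: the Proposition gives that the $\nu_\tau(\bar x_j)$ generate $S(A,\nu_\tau)$, hence $C(A,\nu_\tau)$ is the (finitely generated, hence closed) cone on these points, the body is its slice, and a change of the ray basis $\bf r$ acts by a linear map fixing the grading coordinates. Your closing caveat is also well taken and is even witnessed inside the paper: in Example~\ref{example:val from trop} the column $(1,2)^T$ of $M_\tau$ is \emph{not} a vertex of $\Delta(A,\nu_\tau)$, so the statement must indeed be read as ``$\Delta(A,\nu_\tau)$ is the convex hull of the columns.''

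Two of your steps, however, do not hold as written. First, ``$x_j$ is a single-term lift of $\bar x_j$, hence $\nu_\tau(\bar x_j)=M_\tau e_j$'' is not a proof: by definition $\nu_{M}(\bar f)=\min_\prec\{\tilde\nu_{M}(f):f\in\bar f\}$, so exhibiting one lift only gives $\nu_\tau(\bar x_j)\preceq M_\tau e_j$. To get equality you must exclude a representative $x_j+h$, $h\in I$, of strictly smaller value; such a representative would force the term $x_j$ to cancel, producing an element of $I$ whose $M_\tau$-initial form is a scalar multiple of $x_j$, and this contradicts that $\init_\tau(I)$ contains no monomials because $\tau\subset\trop(I)$. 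That monomial-freeness (equivalently, the standard monomial formula of \S\ref{sec:initial ideals and trop}) is the missing ingredient, and the paper itself is sensitive to this kind of issue: Theorem~\ref{thm:NObodies} explicitly has to assume that $\init_C(I)$ contains no variables. Second, the identity $\nu_{M_{{\bf r}'}}=T\circ\nu_\tau$ is not automatic: the quasivaluation depends on the fixed total order $\prec$ on $\mathbb Z^d$, and $T$ need not be order preserving, so the maxima defining the two quasivaluations can be attained at different exponents and the two valuations need not be conjugate pointwise by $T$. What is true, and suffices for the ``moreover'' part, is that both value semigroups are images of one and the same standard monomial basis under ${\bf x}^a\mapsto M_\tau a$, respectively ${\bf x}^a\mapsto TM_\tau a$ (this is the parametrization recalled in \S\ref{sec:polytopes from adapted}), so $S(A,\nu_{M_{{\bf r}'}})=T\,S(A,\nu_\tau)$; since the first $m$ rows of $T$ are the standard ones, $T$ maps slice to slice and induces the linear isomorphism of bodies. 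Finally, a smaller point: your slicing step is literally correct only for the standard grading. If $m>1$ the columns have first coordinates $e_i\neq(1,\dots,1)$ and do not lie on the slice in \eqref{eq:NObody}; this is why the remark after the Corollary specializes to $r_1=(1,\dots,1)$, and in the multigraded reading one should instead use the degree-normalized description of $\Delta(A,\nu)$ from \S\ref{sec:polytopes from adapted}.
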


Notice that for $I$ homogeneous with respect to the standard grading and $r_1=(1,\dots,1)$ as above the Corollary implies that (up to linear isomorphism) we have $\Delta(A,\nu)\subset \{1\}\times \mathbb R^{d-1}$.

\begin{Example}\label{example:val from trop}
Consider $I=(y^2z-x^3+z^3)\subset \mathbb C[x,y,z]$ from Example~\ref{example: curve} above. The initial ideal $(y^2z-x^3)$ corresponding to the ray $(2,3,0)^T$ in Figure~\ref{fig:example Gfan} is a maximal prime cone $\tau$ in $\trop(I)$. The associated ray matrix is $M_\tau=\left(\begin{smallmatrix}1&1&1\\2&3&0\end{smallmatrix}\right)$.
Let $A=\mathbb C[x,y,z]/I$, then the valuation $\nu_\tau:A\setminus\{0\}\to \mathbb Z_{\ge 0}\times\mathbb Z$ satisfies $\nu_\tau(\bar x)=(1,2),\nu_\tau(\bar y)=(1,3)$ and $\nu_\tau(\bar z)=(1,0)$.
In particular, the value semigroup of $\nu_\tau$ is generated by these three elements and depicted on the right.

Here $\textcolor{teal}{\times}$ denotes the lattice points in $S(A,\nu_\tau)$ and $\textcolor{magenta}{\circ}$ denotes lattice points not contained in $S(A,\nu_\tau)$.
The shaded region is the Newton--Okounkov cone $C(A,\nu_\tau)$ and the line segment connecting $(1,0)^T$ and $(1,3)^T$ is the Newton--Okounkov polytope $\Delta(A,\nu_\tau)$. 
Note that $S(A,\nu_\tau)$ is not saturated: for example, $(2,2)^T$ is in $S(A,\nu_\tau)$, but $(1,1)^T$ is not.
\end{Example}

\subsection{Tropicalization from valuations}\label{sec:trop from val}
Fix a Khovanskii-finite valuation $\nu:A\setminus\{0\}\to \mathbb Z^d$ on a positively graded algebra and domain $A$.
We may assume without loss of generality that $\dim_{\text{Krull}}(A)=d$ (if this was not the case we may apply \cite[Proposition 2.17(e)]{BrunsGubeladze} as the image of $\nu$ is in fact a monoid whose only unit is $0$).
Moreover, we may assume that the underlying total order on $\mathbb Z^d$ is the lexicographic order (if this was not the case we may follow Mora and Robbiano \cite{MoraRobbiano} and represent the order by a $d\times s$ matrix $M$ such that our order coincides with the lexicographic order on $M\mathbb Z^d$).
Choose a finite generating set $a_1,\dots,a_n$ for the value semigroup $\nu(A\setminus\{0\})$ and let $M_\nu$ be $d\times n$ matrix whose columns are $a_1,\dots,a_n$.
Notice that
\[
\text{rank}(M_\nu)=\dim(\text{im}(M_\nu)=\dim(\text{span}_{\mathbb Z}(a_1,\dots,a_n))=\dim(\text{cone}(a_1,\dots,a_n))=\text{rank}(\nu).
\]
In particular, $M_\nu$ is of full rank. 

\begin{Lemma}
Given the generators $a_1,\dots,a_n$ of the value semigroup $S(A,\nu)$ choose $b_1,\dots,b_n\in A$ with $\nu(b_i)=a_i$.
Then the set $\{b_1,\dots,b_n\}$ is a Khovanskii basis. 
\end{Lemma}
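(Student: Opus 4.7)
The plan is to unpack the definition of a Khovanskii basis (a subset of $A$ whose image in $\gr_\nu(A)$ generates $\gr_\nu(A)$ as an algebra) and reduce the claim to the fact that $a_1,\dots,a_n$ generate the value semigroup $S(A,\nu)$. The key technical ingredient will be the identification $\gr_\nu(A) \cong k[S(A,\nu)]$ afforded by the one-dimensional leaves property.

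First I would invoke that $\nu$ is a full-rank valuation (this is part of the definition of Khovanskii-finite), and therefore has one-dimensional leaves by Abhyankar's inequality as recalled earlier in the excerpt. This yields the $k$-algebra isomorphism
\[
\gr_\nu(A) \xrightarrow{\ \sim\ } k[S(A,\nu)], \qquad \hat{f}_\gamma \longmapsto \chi^{\nu(f_\gamma)},
\]
where $\chi^a$ denotes the monomial corresponding to $a \in S(A,\nu)$ in the semigroup algebra. Under this isomorphism, for each $i$ the image of $\hat{b}_i$ is $\chi^{\nu(b_i)} = \chi^{a_i}$.

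Next I would use the hypothesis that $a_1,\dots,a_n$ generate $S(A,\nu)$ as a semigroup: any element $\gamma \in S(A,\nu)$ can be written as a nonnegative integer combination $\gamma = \sum_{i=1}^n c_i a_i$, and therefore $\chi^\gamma = \prod_{i=1}^n (\chi^{a_i})^{c_i}$ lies in the $k$-subalgebra of $k[S(A,\nu)]$ generated by $\chi^{a_1},\dots,\chi^{a_n}$. Since $k[S(A,\nu)]$ is $k$-spanned by the $\chi^\gamma$ as $\gamma$ ranges over $S(A,\nu)$, we conclude that $\{\chi^{a_1},\dots,\chi^{a_n}\}$ is an algebra generating set. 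Transporting back through the isomorphism, $\{\hat{b}_1,\dots,\hat{b}_n\}$ generates $\gr_\nu(A)$ as a $k$-algebra, which is exactly the statement that $\{b_1,\dots,b_n\}$ is a Khovanskii basis.

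I do not expect a serious obstacle: the argument is a direct unpacking once the one-dimensional leaves identification is in hand. The only point to verify with care is that the image of $\hat{b}_i$ really is $\chi^{a_i}$ (i.e.\ that the isomorphism does not introduce nontrivial scalar factors on generators), which follows immediately from the defining formula $\hat{f}_\gamma \mapsto \chi^{\nu(f_\gamma)}$ together with $\nu(b_i) = a_i$.
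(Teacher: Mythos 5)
Your argument is correct and is essentially the paper's own proof: the paper simply says that under the identification $k[S(A,\nu)]\cong \gr_\nu(A)$ (valid since the full-rank valuation has one-dimensional leaves), the semigroup generators $a_1,\dots,a_n$ become algebra generators of $\gr_\nu(A)$. You have merely spelled out the details of tracking $\hat b_i\mapsto \chi^{a_i}$, which is a fine elaboration of the same one-line argument.
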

\begin{proof}
As $k[S(A,\nu)]\cong \gr_v(A)$ the elements $a_1,\dots,a_n$ form a set of algebra generators for $\gr_v(A)$. 
\end{proof}

Notice further that for dimension reasons the Khovsankii basis $\{b_1,\dots,b_n\}$ is a generating set for $A$ as $\nu$ is full-rank. Hence, we may use it to obtain a presentation of $A$. Define
\[
\pi:S:=k[x_1,\dots,x_n]\to A, \quad \text{by} \quad x_i\mapsto b_i.
\]
Notice that $b_1,\dots,b_n$ not necessarily are of degree one in $A$. 
To have a \emph{graded} presentation of $A$ we endow the polynomial ring with a (not necessarily standard) grading given by $\deg(x_i):=\deg(b_i)$.
Then $I:=\ker(\pi)$ is a homogeneous ideal and
$S/I\cong A$.
Our valuation $\nu$ is intimately related with the tropicalization of the ideal $I$. 

\begin{Theorem}[\cite{Bos20_fullrank}]\label{thm:val and trop}
Let $\nu:A\setminus \{0\}\to \mathbb Z^{d}$ be a full rank valuation with finitely generated value semigroup and let $S/I\cong A$ be the presentation induced by a Khovanskii basis $b_1,\dots,b_n$.

Then there exists a maximal prime cone $\tau\in \trop(I)$ such that $\nu=\nu_\tau$ and 
\[
k[S(A,\nu)] \cong \gr_\nu(A)\cong S/\init_\tau(I).
\]
\end{Theorem}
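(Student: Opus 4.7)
The plan is to feed the valuation $\nu$ into the weighting-matrix framework of \S\ref{sec:initial ideals and trop}, show that the induced quasivaluation on $A=S/I$ is $\nu$ itself, and then exhibit a maximal prime cone $\tau\in\trop(I)$ whose associated valuation recovers $\nu$. Concretely, let $M_\nu\in\mathbb Z^{d\times n}$ be the matrix with columns $a_1,\dots,a_n$ where $a_i=\nu(b_i)$, equip $\mathbb Z^d$ with the lexicographic order, and form the quasivaluation $\nu_{M_\nu}:A\setminus\{0\}\to\mathbb Z^d$ with weighting matrix $M_\nu$ from \S\ref{sec:initial ideals and trop}.

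The key step is to verify $\nu_{M_\nu}=\nu$. One direction is automatic from subadditivity: for any lift $f=\sum c_a x^a$ of $\bar f\in A$,
\[
\nu(\bar f)=\nu\!\left(\sum c_a b^a\right)\le\max_\prec\{M_\nu a:c_a\ne 0\}=\tilde\nu_{M_\nu}(f),
\]
so $\nu(\bar f)\le\nu_{M_\nu}(\bar f)$. For the reverse, I would invoke that full-rank valuations automatically have one-dimensional leaves, rescale the $b_i$ so that the isomorphism $\gr_\nu(A)\cong k[S(A,\nu)]$ sends $\hat b_i\mapsto\chi^{a_i}$, and then run the subduction algorithm. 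Since the $a_i$ generate $S(A,\nu)$, at each iteration one may pick a monomial $x^{\alpha^*}$ with $M_\nu\alpha^*$ equal to the current leading value; termination produces a lift $p\in S$ of $\bar f$ whose $M_\nu$-initial form represents the nonzero class $\hat f\in\gr_\nu(A)_{\nu(\bar f)}$. Such a lift satisfies $\tilde\nu_{M_\nu}(p)=\nu(\bar f)$, giving $\nu_{M_\nu}(\bar f)\le\nu(\bar f)$. Once $\nu_{M_\nu}=\nu$ is established, the general theory of \S\ref{sec:initial ideals and trop} yields
\[
S/\init_{M_\nu}(I)\cong\gr_\nu(A)\cong k[S(A,\nu)]
\]
with $\bar x_i\mapsto\chi^{a_i}$, so $\init_{M_\nu}(I)$ is the toric ideal of $S(A,\nu)$, in particular binomial, prime and monomial-free.

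To turn the weighting matrix into a single weight vector, I would set $w=c_1 r_1+\cdots+c_d r_d$ where $r_i$ are the rows of $M_\nu$ and $c_1\gg\cdots\gg c_d>0$, so that the standard lex-refinement argument gives $\init_w(I)=\init_{M_\nu}(I)$. Then $w\in\trop(I)$ and the Gröbner cone $\tau$ containing $w$ is a prime cone of $\trop(I)$ whose initial ideal quotient $S/\init_\tau(I)\cong k[S(A,\nu)]$ has Krull dimension $d=\dim_{\mathrm{Krull}}A$, matching the pure dimension of $\trop(I)$; hence $\tau$ is maximal. Because the rows of $M_\nu$ span the linear hull of $\tau$ modulo its lineality and are of full rank $d$, they furnish an admissible choice of ray data $\mathbf r$ in the sense of the proposition recalled in \S\ref{sec:val from trop}, so $\nu_\tau=\nu_{M_{\mathbf r}}=\nu_{M_\nu}=\nu$ and $\init_\tau(I)=\init_{M_\nu}(I)$, delivering both assertions.

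The hard part is the reverse inequality $\nu_{M_\nu}\le\nu$: it is the only place where one-dimensional leaves, the rescaling of the Khovanskii basis, and termination of subduction must be combined to exhibit a distinguished lift whose $M_\nu$-leading weight equals $\nu(\bar f)$. The remaining work—identifying the initial ideal as a toric one, realising the weighting matrix by a single weight vector, and matching $\nu_\tau$ with $\nu_{M_\nu}$—is bookkeeping on top of the Kaveh--Manon framework of \S\ref{sec:initial ideals and trop} and \S\ref{sec:val from trop}.
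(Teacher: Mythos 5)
Your overall route---identify $\nu$ with the quasivaluation $\nu_{M_\nu}$, pass from the weighting matrix to a single weight vector by a lex-refinement, and then locate the corresponding cone of $\trop(I)$---is essentially a from-scratch reconstruction of the argument of \cite{Bos20_fullrank}, which the paper's own proof simply cites (Theorem 1, Lemma 3, Corollary 3, Proposition 1 there). The two inequalities $\nu\le\nu_{M_\nu}$ (subadditivity applied to any lift) and $\nu_{M_\nu}\le\nu$ (subduction producing a lift whose $M_\nu$-weight is $\nu(\bar f)$) are set up correctly. The genuine gap is in the final geometric step. You justify maximality of the cone $\tau$ with $w\in\tau^\circ$ by saying that $S/\init_\tau(I)\cong k[S(A,\nu)]$ has Krull dimension $d$, ``matching the pure dimension of $\trop(I)$''. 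This invariant proves nothing: for a homogeneous ideal every Gr\"obner degeneration preserves the Hilbert function, so $\dim_{\rm Krull}S/\init_{w'}(I)=d$ for \emph{every} $w'\in\trop(I)$, including points lying in cones of any dimension; the Krull dimension of the special fibre cannot distinguish maximal cones of $\trop(I)$ from lower-dimensional faces. For the same reason, your assertion that the rows of $M_\nu$ span $\langle\tau\rangle_{\mathbb R}$---which is exactly what you need to call them admissible ray data ${\bf r}$ in the sense of \S\ref{sec:val from trop} and conclude $\nu_\tau=\nu_{M_\nu}$---is stated but never proved.

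Both claims follow from a different observation. Since $\init_w(I)=\init_{M_\nu}(I)$ is the toric ideal of the semigroup generated by the columns $a_1,\dots,a_n$, it is homogeneous for the $\mathbb Z^d$-grading $\deg(x_i)=a_i$, i.e.\ $\init_v(\init_w(I))=\init_w(I)$ for every $v$ in the row space of $M_\nu$. Combined with the standard Gr\"obner-fan identity $\init_{w+\epsilon v}(I)=\init_v(\init_w(I))$ for small $\epsilon>0$ (see \cite{Stu96}), this shows that a neighbourhood of $w$ inside $w+{\rm rowspace}(M_\nu)$ lies in the relative interior of the cone containing $w$; hence $\dim\tau\ge d$ and ${\rm rowspace}(M_\nu)\subseteq\langle\tau\rangle_{\mathbb R}$, and purity of $\trop(I)$ in dimension $d$ forces equality in both, giving maximality of $\tau$ and the admissibility of the rows of $M_\nu$ simultaneously; then $\nu_\tau=\nu_{M_\nu}=\nu$ as you intend. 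A smaller point: termination of subduction is not automatic, since the lexicographic order on $\mathbb Z^d$ is not a well-order; you should invoke homogeneity of $\nu$ (Khovanskii-finite valuations are homogeneous by definition) together with finite-dimensionality of the graded pieces of $A$, or the minimum well-ordered hypothesis of the subduction algorithm in \S\ref{sec:val}, to guarantee that the strictly decreasing sequence of values stops.
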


\begin{proof}
Notice that $M:=M_\nu$ the weighting matrix of $\nu$ is of full rank $d \le n$ as $\nu$ is of full rank.
Then by \cite[Theorem 1]{Bos20_fullrank} the initial ideal $\init_M(I)$ is prime as the value semigroup $S(A,\nu)$ is generated by $\nu(b_1),\dots,\nu(b_n)$. 
Here we use the total order on  $\mathbb Z^{d}$ given by $\nu$.
We may restrict our attention to the case of usual initial ideals as by \cite[Lemma 3]{Bos20_fullrank} there exists a weight vector $w\in \mathbb Z^{n}$ such that $\init_w(I)=\init_M(I)$.
It is left to show that
\begin{enumerate}
    \item $w \in \trop(J)$;
    \item $k[S(A,\nu)] \cong S/\init_\tau(J)$, where $w \in \tau^\circ$.  
\end{enumerate}
The first item follows from \cite[Corollary 3]{Bos20_fullrank}.
For the second consider the quasivaluation $\nu_M$.
By Proposition 2.1 $\nu_M$ is a valuation and by \cite[Proposition 1]{Bos20_fullrank} it satisfies $\nu=\nu_M$. 
Further, by \cite[Equation 3.3]{Bos20_fullrank} we have
\[
S/\init_w(I) \cong \gr_{\nu}(A) \cong k[S(A,\nu)],
\]
which finishes the proof.
\end{proof}

One nice feature of this connection is that it may be used to characterize when a toric degeneration is a Gr\"obner degeneration.
We explore this direction further in the following subsection.

Theorem~\ref{thm:val and trop} depends on a choice of Khovanskii basis, so naturally one may ask how strong this dependence is.
If we change the Khovanskii basis, the presentation of $A$ changes and so does the tropicalization.

\begin{Proposition}\label{prop:different trop}
Assume $\mathcal B:=\{b_1,\dots,b_n\}$ and $\mathcal B':=\{b'_1,\dots,b_n'\}$ are two Khovsankii bases of $(A,\nu)$.
Let $I$ and $I'$ be the ideals presenting $A$ and let $\tau$ and $\tau'$ be the cones in the corresponding tropicalizations from Theorem~\ref{thm:val and trop}.
Then there exists an ideal $\tilde I\subset k[y_1,\dots,y_m]$ for some $m\ge n$ presenting $A$ and projections $p:\mathbb R^m\to \mathbb R^n$ and $p':\mathbb R^m\to \mathbb R^n$ such that for a maximal prime cone $\tilde \tau\in \trop(\tilde I)$ we have $p(\tilde\tau)=\tau$ and $p'(\tilde\tau)=\tau'$.
\end{Proposition}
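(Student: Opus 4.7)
The plan is to combine both Khovanskii bases into a single larger one and then invoke Theorem~\ref{thm:val and trop} on the resulting presentation. Set $\tilde{\mathcal B}:=\{b_1,\dots,b_n,b_1',\dots,b_n'\}$; since the image of $\mathcal B$ (and of $\mathcal B'$) already generates $\gr_\nu(A)$, so does $\tilde{\mathcal B}$, hence it is a Khovanskii basis for $\nu$. Assuming after relabelling that the combined set has $m\le 2n$ distinct elements, define the presentation $\tilde\pi:k[y_1,\dots,y_m]\to A$ sending the first block of variables to $\mathcal B$ and the second to $\mathcal B'$, with kernel $\tilde I$. By construction $I=\tilde I\cap k[y_1,\dots,y_n]$ and $I'=\tilde I\cap k[y_{n+1},\dots,y_m]$ (using the tautological inclusions induced by $\tilde\pi|_{k[x]}=\pi$ and $\tilde\pi|_{k[x']}=\pi'$). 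Take $p,p':\mathbb R^m\to \mathbb R^n$ to be the two coordinate projections onto the first and second block, respectively.

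Apply Theorem~\ref{thm:val and trop} to the Khovanskii basis $\tilde{\mathcal B}$: this produces a maximal prime cone $\tilde\tau\in\trop(\tilde I)$ satisfying $\nu=\nu_{\tilde\tau}$ and $\init_{\tilde\tau}(\tilde I)\cong k[S(A,\nu)]$. The weighting matrix $M_{\tilde\tau}$ has as columns $\nu(b_1),\dots,\nu(b_n),\nu(b_1'),\dots,\nu(b_n')$, so the first $n$ columns are precisely those of $M_\tau$ and the last $n$ columns are precisely those of $M_{\tau'}$.

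It remains to verify $p(\tilde\tau)=\tau$ (the argument for $p'(\tilde\tau)=\tau'$ is symmetric). For the inclusion $p(\tilde\tau)\subseteq\tau$, let $\tilde w\in\tilde\tau^\circ$ and write $w:=p(\tilde w)$. Every $f\in I\subset k[y_1,\dots,y_n]$ involves only the first block of variables, so $\init_{\tilde w}(f)=\init_w(f)$; this shows $\init_w(I)\subseteq \init_{\tilde w}(\tilde I)\cap k[y_1,\dots,y_n]$. Both ideals present the same quotient algebra: the right-hand side is $k[S(A,\nu)]$ cut down to the relevant variables, and the left-hand side contains $\init_{M_\tau}(I)\cong k[S(A,\nu)]$ (from Theorem~\ref{thm:val and trop} applied to $\mathcal B$), forcing equality $\init_w(I)=\init_\tau(I)$ and hence $w\in\tau^\circ$. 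Conversely, to obtain $\tau\subseteq p(\tilde\tau)$, pick $w\in\tau^\circ$ and $w'\in\tau'^\circ$, and consider $\tilde w:=(w,w')\in\mathbb R^m$. Because the columns of $M_{\tilde\tau}$ split as described above, the quasivaluation $\nu_{(w,w')}$ agrees with $\nu$ on each element of $\tilde{\mathcal B}$ and therefore on all of $A$; by the uniqueness clause of Theorem~\ref{thm:val and trop} this forces $\tilde w\in\tilde\tau^\circ$, giving $w=p(\tilde w)\in p(\tilde\tau)$.

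The main obstacle is the last paragraph, specifically justifying that a weight vector on a subset of variables inherits the correct initial-ideal behaviour under the inclusion $k[y_1,\dots,y_n]\hookrightarrow k[y_1,\dots,y_m]$. The crux is that the valuation $\nu$ is intrinsic to $A$ and does not depend on presentation, so the matching of the column data of $M_\tau$, $M_{\tau'}$, and $M_{\tilde\tau}$ (which encode $\nu$-values on the respective Khovanskii bases) forces the corresponding initial ideals to coincide after intersecting with the appropriate coordinate subrings. Once this compatibility is established, the proposition follows at once from two applications of Theorem~\ref{thm:val and trop} and a lifting argument.
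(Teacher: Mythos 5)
Your construction is the same as the paper's: form the combined Khovanskii basis $\mathcal B\cup\mathcal B'$ (the paper adds the new elements one at a time, you do it in one block), take the induced presentation $\tilde I$, apply Theorem~\ref{thm:val and trop} to it to obtain $\tilde\tau$, and use the coordinate projections. The gap lies in your verification of $p(\tilde\tau)=\tau$ — exactly the point you flag as the main obstacle. The containment $p(\tilde\tau)\subseteq\tau$ is repairable: for $\tilde w\in\tilde\tau^\circ$ and $f\in I\subset k[y_1,\dots,y_n]$ one has $\init_{\tilde w}(f)=\init_{p(\tilde w)}(f)$, hence $\init_{p(\tilde w)}(I)\subseteq\init_{\tilde\tau}(\tilde I)\cap k[y_1,\dots,y_n]=\init_\tau(I)$, where the last equality holds because both sides are the kernel of $y_i\mapsto\hat b_i$ into $\gr_\nu(A)\cong k[S(A,\nu)]$; then an inclusion between initial ideals of the homogeneous ideal $I$ forces equality, since all initial ideals share the Hilbert function of $I$. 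Your phrase ``the left-hand side contains $\init_{M_\tau}(I)$'' is unjustified and points in the wrong direction, but the Hilbert-function argument fills that hole.

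The reverse containment $\tau\subseteq p(\tilde\tau)$ is where your argument genuinely fails. Choosing $w\in\tau^\circ$ and $w'\in\tau'^\circ$ independently and setting $\tilde w=(w,w')$ need not even give a point of $\trop(\tilde I)$: any relation in $\tilde I$ mixing the two blocks (for instance $y_{n+i}-y_i$ when $b_i=b'_i$, or the polynomial expressing $b'_j$ in terms of the $b_i$'s, which exists because $\mathcal B$ generates $A$) acquires a monomial initial form once one block is rescaled relative to the other, and rescaling keeps $w'$ inside the cone $\tau'^\circ$. Moreover $\nu_{(w,w')}$ is a rank-one, $\mathbb Z$-valued quasivaluation, so it cannot ``agree with'' the rank-$d$ valuation $\nu$; a quasivaluation is not determined by its values on a generating set; and Theorem~\ref{thm:val and trop} contains no uniqueness clause that would convert such agreement into $\tilde w\in\tilde\tau^\circ$. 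What is needed is a genuine lifting argument: starting from $w\in\tau^\circ$ alone, produce the second block from $w$ (for example $w'_j:=\nu_w(\bar b'_j)$ for the quasivaluation induced by $w$ through the presentation $I$) and then verify that this weight realizes $\init_{\tilde\tau}(\tilde I)$. This is the delicate half of the statement; the paper's own proof leaves it implicit, and your closing paragraph, which appeals to $\nu$ being ``intrinsic,'' does not supply it.
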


\begin{proof}
We have two presentations of $A$:
\[
\pi:S\to A,\pi(x_i)=b_i \quad \text{and} \quad \pi':S\to A,\pi'(x_i)=b_i'
\]
given by $\ker(\pi)=I$ and $\ker(\pi')=I'$. To see how the two tropicalizations $\trop(I)$ and $\trop(I')$ are related we proceed recursively and introduce another presentations of $A$ given by $\mathcal B\cup \mathcal B'$.
For simplicity assume $b_i=b_i'$ for all $i<n$.
Consider $\tilde \pi:k[x_1,\dots,x_{n+1}]\to A$ given by $x_i\mapsto b_i, x_{n+1}\mapsto b_n'$ and let $\tilde I:=\ker(\tilde \pi)$.
Let $p:k[x_1,\dots,x_{n+1}]\to k[x_1,\dots,x_n]$ and $p':k[x_1,\dots,x_{n-1},x_{n+1}]$ be the natural projections.
By construction we have $I\subset p(\tilde I)$ and $I'\subset p'(\tilde I)$.
Let $\tilde\tau\in\trop(\tilde I)$ be the maximal prime cone given by Theorem~\ref{thm:val and trop}.
Then the corresponding projections $p$ and $p'$ from $\mathbb R^{n+1}\to \mathbb R^n$ have the desired properties.
\end{proof}

Proposition~\ref{prop:different trop} invites us to change our point of view: suppose we have two Khovanskii-finite valuations $\nu$ and $\nu'$ on $A$ with two different Khovanskii bases $\mathcal B$ and $\mathcal B'$. 
We may use the proof of Proposition~\ref{prop:different trop} to construct a tropicalization that contains simultaneously prime cones corresponding to $\nu$ and $\nu'$.
This idea is closely related to a procedure in \cite{BLMM} for constructing new prime cones by changing the presentation of $A$ (\cite[Procedure 1]{BLMM}).  
It was shown in \cite{IW20} that for non Khovanskii-finite valuations the above mentioned procedure does not terminate.
We further elaborate on these ideas in the example of the Grassmannian $\Gr_3(\mathbb C^6)$ in \S\ref{sec:Gr36}.

\subsection{Which toric degenerations are Gr\"obner?}\label{sec:gröbner?}

Theorem~\ref{thm:val and trop} shows that toric degenerations induced by Khovanskii-finite valuations equivalently arise as Gröbner degenerations from the tropicalization of an adequate ideal. 
Naturally we may extend the question and ask which toric degenerations are Gröbner degenerations.

\medskip
Recall the definition of a toric degeneration from \S\ref{sec:intro}.
Assume $X$ is a projective variety and we have $X=\proj(A)$.
Given $\nu:A\setminus \{0\}\to \mathbb Z^d$ a Khovanskii-finite valuation, $\gr_\nu(A)$ is a $\mathbb Z^d$-graded toric algebra. 
Anderson showed how to construct a toric degenerations of $X$ given $\nu$ and we briefly recall his construction \cite{An13}.
In order to associate a (Noetherian) Rees algebra to $\nu$ that deforms $A$ to $\gr_\nu(A)$ we apply a standard trick  \cite[Proposition 1.8]{Bay82} to change from the $\mathbb Z^d$-grading on $\gr_\nu(A)$ to a $\mathbb Z$-grading:

\begin{Lemma}\label{lem:trick Bayer}
Let $F$ be a finite subset of $\mathbb Z^d$. Then there exists an {\bf order preserving projection} $e :\mathbb Z^d \to \mathbb Z_{\ge 0}$ such that for
all $m, n \in F$ we have $m<n$ (in $\mathbb Z^d$) implies  $e(m) < e(n)$ (in $\mathbb Z$).
\end{Lemma}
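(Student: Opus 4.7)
My strategy is to build $e$ as a weighted sum of the coordinates, with positive integer weights chosen to emulate the lexicographic order on the finite set $F$. The standard trick is to take weights that decrease geometrically at a rate larger than the coordinate bound of $F$, so that the sign of the first non-zero coordinate difference in a lex comparison dictates the sign of the difference of the images, converting an order comparison in $\mathbb Z^d$ into a numerical comparison in $\mathbb Z$.

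Concretely, I would proceed as follows. Since $F$ is finite, fix $B\in\mathbb Z_{>0}$ with $|x_i|\le B$ for every $x=(x_1,\dots,x_d)\in F$ and every coordinate $i$, then set $N:=2B+2$ and
\[
e(x_1,\dots,x_d):=N^{d-1}x_1+N^{d-2}x_2+\cdots+Nx_{d-1}+x_d.
\]
To check that $e$ is order-preserving on $F$, take $m,n\in F$ with $m<n$ lexicographically, and let $k$ be the least index where $m_k<n_k$. Then
\[
e(n)-e(m)=N^{d-k}(n_k-m_k)+\sum_{i=k+1}^{d}N^{d-i}(n_i-m_i)\;\ge\; N^{d-k}-2B\cdot\frac{N^{d-k}-1}{N-1}\;>\;0,
\]
using $n_k-m_k\ge 1$, $|n_i-m_i|\le 2B$, and $N-1>2B$, so the leading term dominates the geometric-series tail.

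For the image to land in $\mathbb Z_{\ge 0}$, I would invoke the intended setup of the lemma: $F$ is drawn from the value semigroup $S(A,\nu)\subset\mathbb Z^m_{\ge 0}\times\Gamma'$ of a fully homogeneous valuation on a positively graded algebra, so $F$ lies in the positive orthant after choosing a $\mathbb Z$-basis of $\Gamma'$; with strictly positive weights $N^{d-i}$ one then has $e(F)\subset\mathbb Z_{\ge 0}$ automatically. In the purely set-theoretic general case one can alternatively post-compose $e$ with a translation by any constant $C\ge-\min_{x\in F}e(x)$, which still preserves strict order on $F$. The only substantive computation is the elementary estimate $\sum_{i=k+1}^{d}N^{d-i}=(N^{d-k}-1)/(N-1)<N^{d-k}/(2B)$; the sole bookkeeping obstacle is choosing $N$ strictly above $2B+1$ so the inequality is strict, and interpreting the word \emph{projection} consistently with the non-negative target $\mathbb Z_{\ge 0}$.
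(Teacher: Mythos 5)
The paper never proves this lemma itself: it is quoted as a standard trick with a citation to \cite[Proposition 1.8]{Bay82}, so there is no internal proof to compare against. Your argument is the standard one behind that citation and it is correct: with $N=2B+2$ the geometric-series estimate $2B\sum_{i=k+1}^{d}N^{d-i}=2B\,\frac{N^{d-k}-1}{N-1}<N^{d-k}$ does force $e(n)-e(m)>0$ whenever $m<n$ lexicographically, and the empty-tail case $k=d$ is harmless. Two remarks on scope rather than correctness. First, you prove the statement for the lexicographic order; this is consistent with the paper, which explicitly reduces to the lex order in \S\ref{sec:trop from val} (for a general additive total order on $\mathbb Z^d$ one would invoke its representation by a real matrix à la Mora--Robbiano and run the same finite-set approximation, which is essentially what \cite{Bay82} does). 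Second, the codomain $\mathbb Z_{\ge 0}$ in the statement cannot be taken literally for a nonzero linear map on all of $\mathbb Z^d$; what the application to the Rees algebra actually needs is that $e$ be additive (so the filtration is multiplicative) and non-negative on the finite set of relevant values, and your primary construction with positive weights delivers exactly this, while your translation fallback should be avoided since it destroys additivity. So your proof is sound, and your closing caveats correctly identify the only points where the lemma's wording is looser than its use.
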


In our setting the set $F$ is induced by a Gröbner basis.
Consider a presentation of $A=S/I$ given by a Khovanskii-basis for $\nu$ and let $\tau\in \trop(I)$ be the maximal prime cone corresponding to $\nu$. Then choose a maximal cone $C\in \GF(I)$ who contains $\tau$ as a face and fix a Gröbner basis $g_1,\dots,g_s$ for $I$ with respect to the monomial initial ideal $\init_C(I)$.
We may assume without loss of generality that $\nu$ is the valuation associated to the matrix $M$ whose rows are representatives of the rays of $\tau$. 
If this is not the case by Corollary~\ref{cor:NO columns} there is a linear isomorphism that maps $S(A,\nu)$ to $S(A,\nu_M)$.
The valuation $\nu_M$ has the advantage that it compatible with $\tilde \nu_M:S\setminus \{0\}\to \mathbb Z^d$ defined as above by
\[
\tilde \nu_M(f) = \max_{\prec}\left\{Ma:f=\sum c_a{\bf x}^a, c_a\not=0\right\}
\]
where $\prec$ is the total order on $\mathbb Z^d$. For every element of the Gröbner basis $g$ we have an expression
\[
g=\sum_{a: \ Ma=\nu_M(g)} c_a{\bf x}^a + \sum_{b: \ Mb\prec \nu_M(g)} c_b{\bf x}^b,
\]
where in particular $Ma\succ Mb$. The elements $Ma,Mb$ for all $g$ in the Gröbner basis constitute the finite set $F$ of Lemma~\ref{lem:trick Bayer} that determines the order preserving projection $e:\mathbb Z^d \to \mathbb Z$.
It induces a $\mathbb Z$-filtration of $A$ with filtered pieces
\[
\mathcal F_{\nu,i}:=\{f\in A: e(\nu(f))\ge i\}.
\]
The associated graded algebra coincides with $\gr_\nu(A)$ by construction.
We define the {\bf Rees algebra of $\nu$} as
\[
\mathcal R_{\nu,A}:=\bigoplus_{i\ge 0} t^i\mathcal F_{\nu,i} \subset A[t].
\]
\begin{Proposition}[Proposition 5.1 in \cite{An13}]
The Rees algebra $\mathcal R_{\nu,A}$ is a flat $k[t]$-algebra with $\mathcal R_{\nu,A}/(t)\cong \gr_\nu(A)$ and $\mathcal R_{\nu,A}[t^{-1}]\cong A[t,t^{-1}]$ as $k[t]$-modules. 
\end{Proposition}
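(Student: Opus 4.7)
My plan is to verify the three assertions---flatness, the central-fibre identification, and the generic-fibre identification---separately, exploiting the $\mathbb Z$-graded structure of $\mathcal R_{\nu,A}$ as a subring of $A[t]$.

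For flatness, I would observe that $A[t]\cong A\otimes_k k[t]$ is a free (hence flat) $k[t]$-module because $A$ is a $k$-vector space, and in particular it is $k[t]$-torsion-free since $A$ is a domain. Any $k[t]$-submodule of a torsion-free module is again torsion-free, so $\mathcal R_{\nu,A}$ inherits torsion-freeness. Over the PID $k[t]$, torsion-freeness is equivalent to flatness, so flatness follows at once.

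For the generic fibre, I would establish the two inclusions between $\mathcal R_{\nu,A}[t^{-1}]$ and $A[t,t^{-1}]$ inside $A[t,t^{-1}]$. One direction is immediate from $\mathcal R_{\nu,A}\subseteq A[t]$. For the other, any $a\in A$ lies in some filtered piece $\mathcal F_{\nu,N}$, so that $t^N a\in \mathcal R_{\nu,A}$ and hence $a=t^{-N}(t^N a)\in \mathcal R_{\nu,A}[t^{-1}]$; since $A$ generates $A[t,t^{-1}]$ over $k[t,t^{-1}]$, this delivers $A[t,t^{-1}]\subseteq \mathcal R_{\nu,A}[t^{-1}]$.

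The central-fibre identification is the heart of the argument and the place where I expect the main technical obstacle. A degree-by-degree comparison of $\mathcal R_{\nu,A}$ and $t\mathcal R_{\nu,A}$ yields, in degree $i$, a successive quotient of two consecutive filtered pieces, which is by definition the degree-$i$ graded piece of the associated graded $\gr_{\mathcal F}(A)$ of the $\mathbb Z$-filtration $\{\mathcal F_{\nu,i}\}$. The subtlety is to argue that $\gr_{\mathcal F}(A)\cong \gr_\nu(A)$ as $k$-algebras, i.e.\ that compressing the $\mathbb Z^d$-graded $\gr_\nu(A)$ down to a $\mathbb Z$-graded object via the order-preserving projection $e$ loses no structural information. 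This is precisely what Lemma~\ref{lem:trick Bayer} guarantees: by construction $e$ preserves strict inequalities on the finite set $F$ of $\nu$-values originating from the Gröbner basis, so no two distinct valuation classes collapse under $e$, and the two associated graded algebras therefore coincide. Combined with the multiplicative structure inherited from the ambient $A[t]$, this yields the desired isomorphism $\mathcal R_{\nu,A}/(t)\cong \gr_\nu(A)$ of graded $k$-algebras.
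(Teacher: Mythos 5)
Your flatness and generic-fibre arguments are correct and are the standard ones: torsion-freeness over the PID $k[t]$ for a submodule of $A[t]$, and inverting $t$ using that every $a\in A$ lies in some filtered piece. This is also how the cited source argues; note that the survey itself gives no proof of this Proposition (it quotes \cite{An13}) and dismisses the identification of the associated graded with the words ``by construction'', so the part you singled out as the heart of the matter is exactly the part the text glosses over. One small caveat you should make explicit even for the easy steps: for $\mathcal R_{\nu,A}$ to be a $k[t]$-module at all, and for your comparison of $\mathcal R_{\nu,A}$ with $t\mathcal R_{\nu,A}$ to make sense, you need $t\mathcal R_{\nu,A}\subseteq \mathcal R_{\nu,A}$; with the paper's maximum convention this forces the filtered pieces entering the Rees sum to be increasing in $i$ (or one uses negative powers of $t$), since with $\mathcal F_{\nu,i}$ decreasing the sum $\bigoplus_{i\ge 0}t^i\mathcal F_{\nu,i}$ is not $t$-stable.

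The genuine gap is in your justification of $\gr_{\mathcal F}(A)\cong \gr_\nu(A)$ for the $\mathbb Z$-filtration induced by $e\circ\nu$. The claim ``no two distinct valuation classes collapse under $e$'' cannot hold: $e$ maps $\mathbb Z^d$ to $\mathbb Z_{\ge 0}$, while the value semigroup $S(A,\nu)$ is an infinite, full-dimensional subsemigroup of $\mathbb Z^d$ (as $\nu$ has full rank), so as soon as $d\ge 2$ the map $e$ identifies infinitely many distinct values of $\nu$. Lemma~\ref{lem:trick Bayer} only gives strict order preservation on the finite set $F$ of values read off from a fixed Gr\"obner basis $g_1,\dots,g_s$, so the $\mathbb Z$-filtration genuinely coarsens the $\mathbb Z^d$-filtration and one must argue that this coarsening does not alter the associated graded. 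The standard way to close this is through the presentation $A\cong S/I$: with $M=M_\tau$ and $w\in\mathbb Z^n$ the weight vector whose $j$-th entry is $e$ applied to the $j$-th column of $M$ (here linearity of $e$ is used), the choice of $F$ gives $\init_w(g_j)=\init_M(g_j)$ for every $j$; one then needs that these initial forms still generate, i.e.\ $\init_w(I)=\init_M(I)$, which uses that the $g_j$ form a Gr\"obner basis for a maximal cone $C\in\GF(I)$ having $\tau$ as a face, so that the initial ideals attached to (faces of) $C$ are generated by the corresponding initial forms and share the standard monomial basis $\mathbb B_C$. Only after this does one get $\gr_{\mathcal F}(A)\cong S/\init_M(I)\cong \gr_\nu(A)$, and hence $\mathcal R_{\nu,A}/(t)\cong\gr_\nu(A)$. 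As written, your key step replaces this Gr\"obner-basis argument by an assertion whose stated reason is false, so the central-fibre identification is not yet proved.
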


Moreover, the isomorphisms on the above proposition hold for the \emph{graded} algebras (remember that $\nu$ is homogeneous). 
In particular, $\phi:\proj(\mathcal R_{\nu,A})\to \mathbb A^1$ is a toric degeneration of $X$ with special fibre $\phi^{-1}(0)=X_0=\proj(\gr_\nu(A))$.
While $X_0$ is normal if and only if $S(A,\nu)$ is saturated, Anderson shows that its normalization is the projective toric variety associated with the Newton--Okounkov polytope $\Delta(A,\nu)$.

Given the example of a toric degeneration induced by a valuation we may formulate an \emph{algebraic} definition of toric degeneration. We summarize the definiton and its relation to valuations in the following result of Kaveh, Manon and Murata:

\begin{Theorem}[Theorem 1.11 in \cite{KMM_dgeneration-cx1}]
\label{thm:R and val}
Let $A$ be a positively graded domain and let $\mathcal R$ be a finitely generated positively graded $k[t]$-module and domain with the following properties:
\begin{itemize}
    \item $\mathcal R[t^{-1}]\cong A[t,t^{-1}]$ as $k[t]$-modules and graded algebras;
    \item the algebra $\mathcal R/(t)$ is a graded semigroup algebra $k[S]$ where $S\subset \mathbb Z_{\ge 0}\times \mathbb Z^d$;
    \item the standard $k^*$-action on $k[t]$ extends to an action on $\mathcal R$ respecting its grading, moreover this $k[t]$-action acts through $(k^*)^d$ on the semigroup algebra $k[S]$.
\end{itemize}
Then there is a full-rank valuation $\nu:A\setminus\{0\}\to \mathbb Z_{\ge 0}\times \mathbb Z^d$ such that $S=S(A,\nu)$.
\end{Theorem}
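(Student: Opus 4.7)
The strategy is to reverse-engineer the valuation in two coordinates: a ``$t$-adic'' coordinate $\nu_1 \in \mathbb{Z}_{\ge 0}$ extracted from the filtration of $A$ that $\mathcal{R}$ encodes, and a ``torus weight'' coordinate $\nu_2 \in \mathbb{Z}^d$ extracted from the $(k^*)^d$-action on the special fibre $k[S]$. Together these yield $\nu(f) := (\nu_1(f),\nu_2(f))$ with $\mathbb{Z}_{\ge 0}\times\mathbb{Z}^d$ ordered lexicographically.

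\emph{Recovering the filtration and the leading form.} The graded isomorphism $\mathcal{R}[t^{-1}] \cong A[t,t^{-1}]$, combined with the $k^*$-action scaling $t$, realises $\mathcal{R}$ inside $A[t]$ as a direct sum $\bigoplus_{i\ge 0} t^i \mathcal{F}_i$ with $\mathcal{F}_i \subseteq A$ forming an exhaustive multiplicative filtration such that $\bigcap_i \mathcal{F}_i = 0$ (the last point following from finite generation of $\mathcal{R}$ together with finite dimensionality of each graded piece of $A$). Set $\nu_1(f) := \sup\{i : f \in \mathcal{F}_i\}$. The $(k^*)^d$-action lifts to $\mathcal{R}$, commutes with the $t$-scaling, and refines $k[S] = \mathcal{R}/(t)$ into a $\mathbb{Z}^d$-grading by the characters $\chi^\alpha$, $\alpha \in S$. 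Fix a linear order $\prec$ on $\mathbb{Z}^d$; for $f \in A\setminus\{0\}$ with $i = \nu_1(f)$, the reduction $\overline{t^i f} \in k[S]$ is nonzero and expands uniquely as $\sum_\alpha c_\alpha \chi^\alpha$ with each $\alpha = (i,\beta)$ in $S$. Define $\nu_2(f)$ to be the $\prec$-maximal such $\beta$ and put $\nu(f) := (\nu_1(f),\nu_2(f))$.

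\emph{Verifying the valuation axioms.} Subadditivity $\nu(f+g) \le \max\{\nu(f),\nu(g)\}$ and invariance under nonzero scalars follow immediately from the bigraded filtered structure. The multiplicative identity $\nu(fg) = \nu(f) + \nu(g)$ is the crucial point: because $S$ embeds in the torsion-free group $\mathbb{Z}\times\mathbb{Z}^d$, the semigroup algebra $k[S]$ is an integral domain, so the two nonzero leading forms $\overline{t^{\nu_1(f)} f}$ and $\overline{t^{\nu_1(g)} g}$ multiply without cancellation in $k[S]$. This forces $\nu_1(fg) = \nu_1(f) + \nu_1(g)$, and since the $\prec$-maximum character of a product of two sums of characters is the sum of the $\prec$-maximum characters of the factors, also $\nu_2(fg) = \nu_2(f) + \nu_2(g)$.

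\emph{Full rank, $S(A,\nu) = S$, and the main obstacle.} By construction $S(A,\nu) \subseteq S$. For the converse, any $\alpha = (i,\beta) \in S$ allows one to lift $\chi^\alpha \in k[S]$ to an element $t^i g \in \mathcal{R}$ of $(t,\text{torus})$-bidegree $(i,\beta)$; this forces $g \in \mathcal{F}_i \setminus \mathcal{F}_{i+1}$ with leading form $\chi^\alpha$, so $\nu(g) = \alpha$, and hence $S(A,\nu) = S$. Full rank then follows from flatness of $\mathcal{R}$ over $k[t]$, which equates the Krull dimensions of the generic and special fibres: $\operatorname{rank}\nu = \operatorname{rank}(\operatorname{gp}(S)) = \dim k[S] = \dim A$. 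The principal obstacle in the argument is the multiplicativity step, where one must simultaneously rule out cancellation of leading forms in $k[S]$ and guarantee additivity of the $\prec$-top weight under multiplication; both rest on the integrity of $k[S]$, which itself stems from the third hypothesis presenting $S$ as a submonoid of a torsion-free abelian group, so all three bullets of the theorem are essentially used.
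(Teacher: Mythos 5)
A preliminary remark: the survey states this theorem without proof, citing Theorem 1.11 of Kaveh--Manon--Murata, so there is no in-paper argument to compare yours with; I am judging the proposal on its own merits. The general circle of ideas you use (recover a filtration of $A$ from the $k^{*}$-equivariant Rees-type structure of $\mathcal R$, pass to leading forms in $\mathcal R/(t)=k[S]$, and use that $k[S]$ is a domain to get multiplicativity) is the right one, and the verification of the valuation axioms for a pair (filtration level, monomial valuation of the leading form) ordered lexicographically is essentially fine.

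The genuine gap is in the step that is supposed to deliver the actual conclusion $S(A,\nu)=S$. You expand the leading form $\overline{t^{i}f}\in k[S]$ as $\sum_\alpha c_\alpha\chi^\alpha$ ``with each $\alpha=(i,\beta)$,'' i.e.\ you assume that the $\mathbb Z_{\ge 0}$-coordinate of the exponents in $S$ equals your filtration level $\nu_1(f)$. That is not among the hypotheses: the grading of $k[S]=\mathcal R/(t)$ recorded by the first coordinate of $S$ is the positive grading inherited from $\mathcal R$ (the degree grading of $A$), whereas the filtration level is the weight of the extended $k^{*}$-action; these are different functions on $S$ in general. Consequently your $\nu=(\nu_1,\nu_2)$ has value semigroup equal to the image of $S$ under $\alpha\mapsto(\text{$t$-weight of }\chi^\alpha,\ (k^{*})^{d}\text{-weight of }\chi^\alpha)$, which need not equal $S$ as a subset of $\mathbb Z_{\ge 0}\times\mathbb Z^{d}$ (and need not even be in bijection with $S$: two characters of different degree can share both weights), so both inclusions in your argument for $S(A,\nu)=S$ are unjustified. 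The missing idea is exactly what the third hypothesis provides: since the $t$-scaling $k^{*}$ acts on $k[S]$ through the torus, the filtration level of $\chi^\alpha$ is a linear functional $\langle u,\alpha\rangle$ on $S$; one then defines $\nu(f)$ to be the $\prec$-maximal exponent $\alpha\in S$ itself appearing in the leading form, for a total order $\prec$ on $\mathbb Z_{\ge 0}\times\mathbb Z^{d}$ refining comparison of $\langle u,\cdot\rangle$, which forces $\nu$ to take values in $S$, preserves subadditivity when leading forms cancel (a change of filtration level cannot increase the value), and gives $S(A,\nu)=S$ by lifting characters. Your write-up uses the torus action only to read off a $\mathbb Z^{d}$-weight and never exploits this linearity, so the gap cannot be closed without adding that ingredient. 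Secondary, fixable issues: your filtration bookkeeping is inconsistent (with $\mathcal R=\bigoplus_{i\ge 0}t^{i}\mathcal F_i\subset A[t]$ the $\mathcal F_i$ must be increasing and $\nu_1$ a minimum, whereas you treat them as decreasing with $\bigcap_i\mathcal F_i=0$ and $\nu_1$ a supremum), and flatness of $\mathcal R$ over $k[t]$ is not a hypothesis but follows from $\mathcal R$ being a domain, hence torsion-free over the principal ideal domain $k[t]$.
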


We call a toric degeneration of $\proj(A)$ induced by an algebra $\mathcal R$ as in Theorem~\ref{thm:R and val} an {\bf algebraic toric degeneration}. We obtain the following Corollary by combining Theorem~\ref{thm:R and val} and Theorem~\ref{thm:val and trop}:
 
\begin{Corollary}
Every algebraic toric degeneration is induced by a valuation and can be realized as a Gröbner degeneration associated with a maximal prime cone in the tropicalization of an apropriate ideal.
\end{Corollary}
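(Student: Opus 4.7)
The plan is to simply chain the two main theorems of this section. Let $\mathcal R$ be the algebra realizing an algebraic toric degeneration of $\proj(A)$. First I would apply Theorem~\ref{thm:R and val} directly: its three bulleted hypotheses are exactly the definition of an algebraic toric degeneration, so it produces a full-rank valuation $\nu : A \setminus \{0\} \to \mathbb Z_{\ge 0} \times \mathbb Z^d$ with $S(A,\nu) = S$, where $k[S] \cong \mathcal R/(t)$. This already settles the first half of the statement (``induced by a valuation'').

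Next I would verify that $\nu$ is Khovanskii-finite so that Theorem~\ref{thm:val and trop} becomes applicable. Since $\mathcal R$ is finitely generated as a positively graded $k[t]$-algebra, its quotient $\mathcal R/(t) \cong k[S]$ is a finitely generated $k$-algebra, which forces $S = S(A,\nu)$ to be a finitely generated semigroup. Combined with full-rankedness this is precisely the definition of Khovanskii-finiteness recalled in \S\ref{sec:val}. Lifting semigroup generators of $S$ to elements $b_1,\dots,b_n \in A$ gives a Khovanskii basis (by the Lemma in \S\ref{sec:trop from val}), and the induced surjection $\pi : k[x_1,\dots,x_n] \to A$, $x_i \mapsto b_i$, has kernel $I$ with $S/I \cong A$.

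Now I would apply Theorem~\ref{thm:val and trop} to this presentation. It yields a maximal prime cone $\tau \in \trop(I)$ such that $\nu = \nu_\tau$ and
\[
k[S(A,\nu)] \cong \gr_\nu(A) \cong S/\init_\tau(I).
\]
Picking any $w \in \tau^\circ \cap \mathbb Z^n$ and forming the Gröbner degeneration $\proj(A^{h;w}) \to \mathbb A^1$ from \S\ref{sec:initial ideals and trop} produces a flat family of $\proj(A)$ whose special fibre is $\proj(S/\init_w(I)) = \proj(S/\init_\tau(I))$, a (possibly non-normal) toric variety sharing the central fibre of the original family $\proj(\mathcal R) \to \mathbb A^1$.

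The only part that requires a moment's care is checking that this Gröbner family genuinely \emph{realizes} the original algebraic toric degeneration, not merely another flat family with isomorphic central fibre. Here I would invoke Anderson's construction recalled right before Theorem~\ref{thm:R and val}: choosing an order-preserving projection $e : \mathbb Z^d \to \mathbb Z$ as in Lemma~\ref{lem:trick Bayer} for the finite set of exponents appearing in a Gröbner basis of $I$ relative to a term order refining $\tau$, the Rees algebra $\mathcal R_{\nu,A}$ associated to the $\mathbb Z$-filtration $\mathcal F_{\nu,i} = \{f : e(\nu(f)) \ge i\}$ coincides with the homogenization $A^{h;w}$ for a weight $w$ whose restriction to $\mathbb Q \cdot \tau$ agrees with $e \circ M_\tau$. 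Thus $\proj(\mathcal R_{\nu,A}) \to \mathbb A^1$ is simultaneously Anderson's valuation-induced degeneration and the Gröbner degeneration from $\tau$, and its uniqueness among algebraic toric degenerations with the prescribed central fibre and generic fibre (guaranteed by the $k[t]$-module identifications in Theorem~\ref{thm:R and val}) identifies it with the given $\mathcal R$.
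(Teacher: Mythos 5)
Your proposal is correct and follows essentially the same route as the paper: the Corollary is obtained precisely by chaining Theorem~\ref{thm:R and val} (which produces the full-rank valuation $\nu$ with $S(A,\nu)=S$, and whose finite generation plus full rank gives Khovanskii-finiteness) with Theorem~\ref{thm:val and trop} (which supplies the maximal prime cone $\tau$ with $\nu=\nu_\tau$ and $\gr_\nu(A)\cong S/\init_\tau(I)$). One caveat: your closing claim that the Gröbner family is identified with the given $\mathcal R$ by ``uniqueness among algebraic toric degenerations with the prescribed central fibre and generic fibre'' is not justified by Theorem~\ref{thm:R and val} as stated (the $k[t]$-module identifications do not force uniqueness of the family), but this extra step is not needed for the Corollary, whose content is only that the degeneration is induced by a valuation and can be realized as a Gröbner degeneration from a maximal prime cone.
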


\section{Adapted bases and wall-crossing formulas}\label{sec:adapted}

Suppose $A=\bigoplus_{j\ge 0}A_j$ is a graded algebra, for example the section ring of a line bundle, and it is equipped with a vector space basis $\BB$.
We assume that the basis is {\bf graded}, i.e. basis elements are homogeneous and a homogeneous element $f$ of degree $i$ is a linear combination of basis elements of degree $i$.
Recall that given a valuation $\nu:A\setminus\{0\}\to \Gamma$ the basis $\mathbb B$ is {\bf adapted to $\nu$} if for every $\gamma\in \Gamma$ the set $\mathbb B\cap \mathcal F_{\nu,\gamma}$ is a vector space basis of the filtered piece $\mathcal F_{\nu,\gamma}\subset A$.
Reversely we say that {\bf $\nu$ is adapted to $\mathbb B$}.
In particular, if $\nu$ has one-dimensional leaves we have a bijection of sets
\[
\mathbb B \quad \leftrightarrow \quad S(A,\nu).
\]
In this section we explore the consequences.
Assume the basis is {\bf parametrized by lattice points}, so we have an assignment of $b\mapsto m(b)\in \mathbb Z^n$ for all $b\in \mathbb B$.
In fact we may find an adapted basis with a parametrization by lattice points for every Khovanskii-finite valuation. 
Consider $\nu:A\setminus \{0\}\to \mathbb Z^{d}$ with Khovanskii basis $b_1,\dots,b_n$ and let $S/I\cong A$ and $\tau\in\trop(I)$ be the presentation of $A$ and the maximal prime cone in $\trop(I)$ as in Theorem~\ref{thm:val and trop}.
By definition $\trop(I)$ is a subfan of the Gröbner fan $\text{GF}(I)$ and $\tau$ is a face of at least one maximal cone in $\text{GF}(I)$.
Maximal cones in $\text{GF}(I)$ are in correspondence with monomial initial ideals of $I$ as defined above.
For a maximal cone $C$ we denote by $\init_C(I)\subset S$ the corresponding monomial ideal.
In particular, the set
\[
\mathbb B_C:=\{{\bf x}^m:{\bf x}^m\not \in \init_C(I)\}
\]
is a vector space basis for all quotients $A_w:=S/\init_w(I)$ with $w\in C$ called a {\bf standard monomial basis}.
Notice that for $w=0$ the quotient $A_w=A$ and for $w\in \tau^\circ$ we have $A_w\cong \gr_\nu(A)$.
Hence, $\mathbb B_C$ is an adapted basis for $\nu$.
The assignment
\[
{\bf x}^m\mapsto m\in \mathbb Z^n 
\]
is a parametrization by lattice points.
Recall that every monomial ideal has a unique set of monomial generators (see e.g. \cite[Proposition 1.1.6]{HH_monomial}).
Let ${\bf x}^{g_1},\dots,{\bf x}^{g_t}$ be this generating set.
Then ${\bf x}^m\in \init_C(I)$ if and only if there exists $i$ such that ${\bf x}^{g_i}\text{ divides } {\bf x}^m$.
This translates to $m\not \in \bigcup_{i=1}^t g_t+\mathbb Z^n_{\ge 0}$ for elements of the standard monomial basis, where $+$ denotes the Minkowski sum.
So we have bijections of sets
\[
S(A,\nu)  \quad \leftrightarrow \quad   \mathbb B_C \quad \leftrightarrow \quad  \mathbb Z^n_{\ge 0}\setminus \bigcup_{i=1}^t g_i+\mathbb Z^n_{\ge 0}
\]
Hence, we obtain the following corollary:

\begin{Corollary}
Every Khovanskii-finite valuation has an adapted basis parametrized by lattice points. 
\end{Corollary}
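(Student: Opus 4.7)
The plan is to unpack the construction sketched in the paragraphs preceding the statement. Starting from a Khovanskii-finite valuation $\nu$, I pick a finite Khovanskii basis $b_1,\dots,b_n$ and form the presentation $\pi: S = k[x_1,\dots,x_n]\to A$ with $\pi(x_i) = b_i$ and $I = \ker\pi$. Theorem~\ref{thm:val and trop} produces a maximal prime cone $\tau\in\trop(I)$ with $\nu = \nu_\tau$. Since $\trop(I)$ is a subfan of $\GF(I)$, the cone $\tau$ is a face of some maximal cone $C\in\GF(I)$, whose monomial initial ideal $\init_C(I)$ produces the standard monomial basis $\mathbb B_C = \{\bar x^m : x^m\notin\init_C(I)\}$ of $A$. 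If $x^{g_1},\dots,x^{g_t}$ are the minimal monomial generators of $\init_C(I)$, the bijection $\bar x^m \mapsto m$ identifies $\mathbb B_C$ with the lattice set $\mathbb Z^n_{\ge 0}\setminus\bigcup_{i=1}^t (g_i+\mathbb Z^n_{\ge 0})$, giving the desired parametrization by lattice points.

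It remains to verify that $\mathbb B_C$ is adapted to $\nu$. Because $C$ is a Gröbner cone, for any $w\in C$ the algebra $A^{h;w}$ is a free $k[t]$-module with basis $\mathbb B_C$, so specialising at $t=1$ shows that $\mathbb B_C$ is a basis of $A$, and choosing $w\in \tau^\circ\subseteq C$ and specialising at $t=0$ shows that $\mathbb B_C$ is simultaneously a basis of $\gr_\nu(A)\cong S/\init_w(I)$. The formula recalled at the end of \S\ref{sec:initial ideals and trop} gives $\nu(f) = \max_\prec\{M_\tau b : c_b\ne 0\}$ for any expansion $f = \sum_b c_b \bar x^b$ in $\mathbb B_C$; in particular $\nu(\bar x^b) = M_\tau b$. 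Given $\gamma\in\mathbb Z^d$ and $f\in\mathcal F_{\nu,\gamma}$, split $f = f^+ + f^-$ according to whether $M_\tau b\succeq \gamma$ or $M_\tau b\prec\gamma$. The summand $f^+$ automatically lies in $\mathcal F_{\nu,\gamma}$, so $f^- = f - f^+\in\mathcal F_{\nu,\gamma}$ as well; but the formula forces $\nu(f^-)\prec\gamma$ unless $f^- = 0$, contradicting $f^-\in\mathcal F_{\nu,\gamma}$. Hence $f^- = 0$ and $f\in\mathrm{span}(\mathbb B_C\cap\mathcal F_{\nu,\gamma})$; linear independence is inherited from $\mathbb B_C$, so $\mathbb B_C\cap\mathcal F_{\nu,\gamma}$ is a basis of $\mathcal F_{\nu,\gamma}$.

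I do not anticipate a substantive obstacle here, since every ingredient --- Theorem~\ref{thm:val and trop}, the $\mathbb B_<$-expansion formula for $\nu_M$, and the basis property of standard monomials on each Gröbner cone --- has already been collected in the preceding discussion, so the proof reduces to the bookkeeping above. The only subtlety worth flagging is the need for $\tau\subseteq C$ (not merely $\tau\cap C^\circ\ne\emptyset$), which is guaranteed because $\tau$ is a face of $C$; this is what lets a single basis $\mathbb B_C$ serve both as a basis of $A$ and of $\gr_\nu(A)$.
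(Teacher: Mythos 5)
Your proposal is correct and follows essentially the same route as the paper: present $A$ via the Khovanskii basis, invoke Theorem~\ref{thm:val and trop} to get the maximal prime cone $\tau$, pass to a maximal Gröbner cone $C\supseteq\tau$, and take the standard monomial basis $\mathbb B_C$ with its tautological lattice parametrization. Your explicit verification of adaptedness via the expansion formula $\nu(f)=\max_\prec\{M_\tau b:c_b\neq 0\}$ only spells out what the paper leaves implicit in the sentence ``Hence, $\mathbb B_C$ is an adapted basis for $\nu$,'' so it is a welcome elaboration rather than a different argument.
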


\begin{Example}\label{example: standard mono basis}
Consider as above $I=(y^2z-x^3+z^3)\subset \mathbb C[x,y,z]$ and the maximal prime cone $\tau\in \trop(I)$ spanned by the ray $(2,3,0)^T$ modulo $\mathcal L_I$.
Inside $\GF(I)$ the cone $\tau$ is adjacent to two maximal cones: one of them has associated initial monomial ideal $(x^3)$ (see Figure~\ref{fig:example Gfan}). Let $C$ be this maximal cone. Then $\mathbb B_C=\{x^ay^bz^c:a<3\}$ is the set of all monomials in $\mathbb C[x,y,z]$ that are not divisible by $x^3$, hence they are not in $\init_C(I)$.
\end{Example}

\subsection{Polytopes from adapted bases}\label{sec:polytopes from adapted}
Fix a Khovanskii-finite valuation $\nu:A\setminus\{0\}\to \mathbb Z^d$ and an associated adapted standard monomial basis $\mathbb B:=\mathbb B_C$ with the parametrization given above.
For an element $f\in A$ let $f=\sum_{{\bf x}^m\in \mathbb B}c_m{\bf x}^m$ be its linear extension in $\mathbb B$.
Let $M$ be the matrix whose rows $r_1,\dots,r_m$ are representatives of primitive ray generators of $C$. 
In particular, let $r_1,\dots,r_s$ be the generators of the lineality space.
Define $\supp_{\BB}(f):=\{m\in \mathbb Z^n:c_m\not =0\}\subset \mathbb Z^n$
and the {\bf $\BB$-Newton polytope}  of $f$ by
\[
\text{New}_{\BB}(f):=\conv\left(Ma:a\in\supp_{\BB}(f)\right)\subset \RR^m.
\]
Notice that $\text{New}_{\BB}(f)$ depends on our choice of ray generators for $C$. We will slightly abuse notation and not include $M$ in the index. 
We think of the $\text{New}_{\BB}(f)$ as placeholder for a valuation adapted to $\mathbb B$ and we define a placeholder for the Newton--Okounkov body of $\nu$:
\[
\Delta_{\BB}(A):=\conv\left(\bigcup_{j\ge 1}\left\{ \frac{1}{j}\text{New}_{\BB}(f):f\in A_j\right\}\right)\subset \mathbb R^m.
\]
Recall that the {\bf Newton--Okounkov body} of a valuation $\nu:A\setminus\{0\}\to \mathbb Z^d$ is defined as
\[
\Delta(A,\nu):=\conv\left(\bigcup_{j\ge 1}\left\{ \frac{\nu(f)}{j}:f\in A_j \right\}\right).
\]
If $\nu$ is fully homogeneous, i.e. of form $\nu:A\setminus \{0\}\to \mathbb Z_{\ge 0}^m\times \Gamma'$ given by $ \nu(f)=(\deg(f),\nu(f))$ then the above definition coincides with the one in Equation \eqref{eq:NObody}.
Any valuation constructed from a maximal prime cone $\tau$ in the tropicalization of an ideal as in \S\ref{sec:val from trop} is by construction fully homogeneous.
Recall that the first $m$ rows of $M_\tau$ are the elements $\ell_i$ from Lemma~\ref{lem:grading and lineality}.
Let $M_\ell$ be the submatrix with rows $\ell_1,\dots,\ell_m$.
Denote by ${\rm pr}:\mathbb Z_{\ge 0}^m\times \Gamma'\to \mathbb Z_{\ge 0}^m$ the projection.
Then for any element $f\in A$ we have
\[
{\rm pr}(\nu_M(f))=M_\ell b=\deg(f)
\]
where $b$ is such that $M_\tau b=\max_{<}\{M_\tau a:f=\sum c_a{\bf x}^a,c_a\not =0\}$.
Moreover, in this context the bijection between the basis $\mathbb B=\mathbb B_C$ and the valuation $\nu_{M_\tau}$ is given explicitly by
\begin{eqnarray*}
\mathbb B \to S(A,\nu_M),\quad  {\bf x}^a \mapsto M_{\tau}a
\end{eqnarray*}

\begin{Theorem}\label{thm:NObodies}
Let $C\in \GF(I)$ be a maximal cone that contains the maximal prime cones $\tau_1,\dots,\tau_q\in \trop(I)$ as $d$-dimensional face with associated Khovanskii-finite valuations $\nu_i:A\setminus\{0\}\to \mathbb Z^d$. 
Assume additionally that $\init_C(I)$ does not contain any variables.
Then there exist projections $p_i:\mathbb R^n\to \mathbb R^d$ for $1\le i\le q$ such that
\[
p_i(\Delta_{\mathbb B}(A))=\Delta(A,\nu_i).
\]
\end{Theorem}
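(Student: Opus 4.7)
The plan is to exploit the fact that because each prime cone $\tau_i$ is a $d$-dimensional face of $C$, the standard monomial basis $\BB=\BB_C$ is simultaneously adapted to every valuation $\nu_i$, and the weighting matrix $M_{\tau_i}$ sits naturally as a row-submatrix of $M$. Concretely, I would first choose primitive ray generators of $C$ so that the rays spanning each $\tau_i$ (together with a common basis of $\mathcal L_I$, in particular the rows $\ell_1,\dots,\ell_m$ from Lemma~\ref{lem:grading and lineality}) appear among them; then $M_{\tau_i}$ is obtained from $M$ by keeping exactly the rows corresponding to rays of $\tau_i$. Let $p_i$ be the coordinate projection that selects these rows and drops the others. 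Then $p_i\circ M=M_{\tau_i}$, and consequently
$$p_i\bigl(\text{New}_{\BB}(f)\bigr)=\conv\bigl(M_{\tau_i}a:a\in \supp_{\BB}(f)\bigr)\quad \text{for every }f\in A.$$

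Next I would invoke adaptedness. Since $\tau_i\subset C$, the discussion preceding the theorem (together with Theorem~\ref{thm:val and trop}) gives the bijection $\BB\leftrightarrow S(A,\nu_i)$ explicitly by $\bar x^a\mapsto M_{\tau_i}a=\nu_i(\bar x^a)$, and the quasivaluation formula yields $\nu_i(f)=\max_{\prec}\{M_{\tau_i}a:a\in\supp_{\BB}(f)\}$. Because $\BB$ is a graded basis, whenever $f\in A_j$ every $\bar x^a$ in its $\BB$-expansion itself lies in $A_j$; hence each point $M_{\tau_i}a/j$ appearing in $p_i(\text{New}_{\BB}(f))/j$ has the form $\nu_i(\bar x^a)/j$ with $\bar x^a\in A_j$.

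With these ingredients both inclusions drop out. For $\Delta(A,\nu_i)\subseteq p_i(\Delta_{\BB}(A))$: for any $f\in A_j$ the point $\nu_i(f)/j$ equals $M_{\tau_i}a^\ast/j$ for the $\prec$-maximal $a^\ast\in\supp_{\BB}(f)$, which already lies in $p_i(\text{New}_{\BB}(f))/j\subseteq p_i(\Delta_{\BB}(A))$. Conversely, every point of $p_i(\text{New}_{\BB}(f))/j$ is a convex combination of points $\nu_i(\bar x^a)/j$ with $\bar x^a\in A_j$, hence lies in $\Delta(A,\nu_i)$ by convexity of the Newton--Okounkov body; taking convex hulls over all $j$ and all $f\in A_j$ gives $p_i(\Delta_{\BB}(A))\subseteq\Delta(A,\nu_i)$.

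The main obstacle, in my view, lies in justifying the very first step: that the matrices $M_{\tau_i}$ can be taken as row-submatrices of a \emph{single} matrix $M$ uniformly in $i$. This is where the hypothesis that $\init_C(I)$ contains no variables is crucial --- it forces each $x_j$ to be a standard monomial (so the Khovanskii generators $b_j$ are themselves elements of $\BB$), and it keeps the lineality rows $\ell_1,\dots,\ell_m$ compatible between $M$ and every $M_{\tau_i}$. This in turn guarantees that $\Delta_{\BB}(A)$ and each $\Delta(A,\nu_i)$ are genuine slices of their Newton--Okounkov cones recorded in matching coordinates, so that the projection $p_i$ really does intertwine the two constructions.
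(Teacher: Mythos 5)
Your proposal is correct and follows the same overall strategy as the paper: realize each $M_{\tau_i}$ as a row-submatrix of the ray matrix $M$ of $C$ (together with the lineality rows), let $p_i$ be the corresponding coordinate projection, and trace points through the bijection $\mathbb B\leftrightarrow S(A,\nu_i)$, ${\bf x}^a\mapsto M_{\tau_i}a$. The one place where you genuinely diverge is the inclusion $\Delta(A,\nu_i)\subseteq p_i(\Delta_{\mathbb B}(A))$: the paper only checks the vertices of $\Delta(A,\nu_i)$, which by Corollary~\ref{cor:NO columns} are the columns $M_ie_j=\nu_i(\bar x_j)$, and this is exactly where the hypothesis that $\init_C(I)$ contains no variables enters --- it guarantees $x_1,\dots,x_n\in\mathbb B$, so these vertices are hit by the computation applied to the variables. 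You instead prove the inclusion for an arbitrary $f\in A_j$ using the standard-monomial expression formula $\nu_i(f)=\max_\prec\{M_{\tau_i}a:a\in\supp_{\mathbb B}(f)\}$ from \S\ref{sec:initial ideals and trop} (valid since $\tau_i$ is a face of $C$, so $M_{\tau_i}\in C^d_<$ for the term order defining $\mathbb B_C$); this is a clean variant that in fact makes the no-variables hypothesis superfluous for that direction. Your diagnosis of where that hypothesis is ``crucial'' is, however, misplaced: that the $M_{\tau_i}$ are row-submatrices of a single $M$ is automatic, since every ray of a face $\tau_i$ of $C$ is a ray of $C$ and $\mathcal L_I$ is contained in every face, independently of whether any variable lies in $\init_C(I)$. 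Its actual role, in the paper's argument, is only to ensure the Khovanskii generators are standard monomials so that the vertices of $\Delta(A,\nu_i)$ are realized; in your argument it plays no essential role.
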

\begin{proof}
Let $M$ be the matrix whose rows $r_1,\dots,r_{n_C}$ are either generators of the lineality space $\mathcal L_I$ or representatives of primitive ray generators for $C/\mathcal L_I$. 
Notice that $n_C\ge n$ as $C$ is a maximal cone with equality if and only if $C/\mathcal L_I$ is simplicial.
For every cone $\tau_i$ choose a collection of rows $r_1,\dots,r_s,r_{i_1},\dots,r_{i_{d-s}}$ 
(where $r_1,\dots,r_s$ are the generators of the lineality space), that correspond to rays spanning the same real vector space as $\tau_i$:
\[
\langle r_1,\dots,r_s,r_{i_1},\dots,r_{i_{d-s}}\rangle_{\mathbb R}= \langle \tau_i\rangle_{\mathbb R}.
\]
Denote the matrix whose rows are $r_1,\dots,r_s,r_{i_1},\dots,r_{i_{d-s}}$ by $M_i$
and define $p_i:\mathbb R^{n_C}\to \mathbb R^d$ as the projection onto the coordinates $1,\dots,s,i_1,\dots,i_{d-s}$.
Recall that $\Delta(A,\nu_i)$ without loss of generality by Corollary~\ref{cor:NO columns} is the convex hull of the columns of the matrix $M_i$.
We verify $p_i(\Delta_{\mathbb B}(A))=\Delta(A,\nu_i)$ pointwise by tracing the elements of $\mathbb B$ through both constructions.
As $\mathbb B$ is in bijection with $S(A,\nu_i)$ the claim follows.
Consider ${\bf x}^a\in \mathbb B$, then $\text{New}_{\mathbb B}({\bf x}^a)=Ma$. In $\Delta_{\mathbb B}(A)$ the element ${\bf x}^a$ corresponds to the point $\frac{1}{a_1+\dots+a_n}Ma$ and
\[
p_i\left(\frac{1}{a_1+\dots+a_n}Ma\right)= \frac{1}{a_1+\dots+a_n}M_ia=\frac{1}{a_1+\dots+a_n}\nu_i({\bf x}^a)\in \Delta(A,\nu_i),
\]
so $p_i(\Delta_{\mathbb B}(A))\subset \Delta(A,\nu_i)$. To show equality it suffices to verify that the vertices of $\Delta(A,\nu_i)$ are contained in $p_i(\Delta_{\mathbb B}(A))$.
By the additional assumption that $\init_C(I)$ does not contain any variables we know that $x_1,\dots,x_n\in \mathbb B$.
The computation above applied to a variable $x_j={\bf x}^a$ yields:
\[
p_i\left(\frac{1}{a_1+\dots+a_n}Ma\right)=M_ie_j=M_{ij},
\]
where $M_{ij}$ is the $j$th column of $M_i$ and a vertex of $\Delta(A,\nu_i)$ by Corolary~\ref{cor:NO columns}.
\end{proof}

\begin{Example}\label{exp:B-poly}
We continue with the Example~\ref{example: standard mono basis}. For the maximal cone $C$ we choose the ray matrix:
\[
M_C=\left(\begin{matrix} 1&1&1\\ 2&3&0\\ 1&0&1\end{matrix}\right).
\]
Notice that $(1,0,1)\mod \mathcal L_I=(0,-1,0)\mod \mathcal L_I$ which corresponds to the teal ray in Figure~\ref{fig:example Gfan}. Let $\mathbb B=\mathbb B_C$ and $A=\mathbb C[x,y,z]/I$.
Then 
\[
\Delta_{\mathbb B}(A)=\conv \left( \frac{1}{a+b+c}\left(\begin{smallmatrix}a+b+c\\2a+3b\\a+c\end{smallmatrix}\right):\begin{matrix}a+b+c\ge 1, a<3\\ a,b,c\in \mathbb Z_{\ge 0}\end{matrix}\right) =\conv\left(\left(\begin{smallmatrix}1\\2\\1\end{smallmatrix}\right),\left(\begin{smallmatrix}1\\3\\0\end{smallmatrix}\right),\left(\begin{smallmatrix}1\\0\\1\end{smallmatrix}\right)\right).
\]
Let $p_1$ be the projection away from the third coordinate in $\mathbb R^3$, then 
\[
p_1(\Delta_{\mathbb B}(A))=\conv\left(\binom{1}{3},\binom{1}{0}\right)=\Delta(A,\nu_\tau).
\]
Where $\tau\in \trop(I)$ is the maximal prime cone spanned by $(2,3,0)^T\mod \mathcal L_I$ as in Example~\ref{example:val from trop}.
\end{Example}

\subsection{Wall-crossing formulas}\label{sec:wall-crossing}
The Newton--Okounkov polytopes associated to the faces $\tau_1,\dots,\tau_q$ of $C$ are related by piecewise-linear maps called {\bf wall-crossing formulas} that were introduced by Escobar and Harada in \cite{EH-NObodies}.
We briefly review their construction.

Assume that $\tau_1$ and $\tau_2$ are two adjacent faces of the maximal cone $C\in \GF(I)$, so that $\tau:=\tau_1\cap\tau_2$ is a facet of both. 
Then we may choose the ray matrices $M_1$ and $M_2$ such that they agree in all rows but the last one.
Let $M_{1,2}$ be the matrix with the $d-1$ rows that $M_1$ and $M_2$ have in common.
In particular, we have two full-rank homogeneous valuations $\nu_1:=\nu_{\tau_1},\nu_2:=\nu_{\tau_2}: A\setminus \{0\}\to \mathbb Z^m_{\ge 0}\times \mathbb Z^{d-m}$ and one homogeneous valuation $\nu_{1,2}:A\setminus\{0\}\to \mathbb Z^m_{\ge 0}\times \mathbb Z^{d-m-1}$ of almost full rank, that is rank $d-1$.
We denote by $p_{[d-m-1]}:\mathbb R^{d-m}\to \mathbb R^{d-m-1}$ the projection onto the first $d-m-1$ coordinates.
By construction (and Corollary~\ref{cor:NO columns}) we have the following relation between the associated Newton--Okounkov polytopes
\[
\xymatrix{
\Delta(A,\nu_1) \ar[rd]_{p_{[d-m-1]}} & & \Delta(A,\nu_2) \ar[ld]^{p_{[d-m-1]}}\\
& \Delta(A,\nu_{1,2})
}
\]
In particular there exist piecewise linear maps $\varphi_i:\Delta(A,\nu_{1,2})\to \mathbb R$ and $\psi_i:\Delta(A,\nu_{1,2})\to \mathbb R$ for $i\in \{1,2\}$ such that
\begin{equation}
\Delta(A,\nu_i)=\left\{({\bf 1},v,z)\in \{{\bf 1}\}\times \mathbb R^{d-m-1}\times \mathbb R: \begin{matrix} ({\bf 1},v)\in \Delta(A,\nu_{1,2}) \\ \varphi_i({\bf 1},v)\le z\le \psi_i({\bf 1},v)\end{matrix}\right\},    
\end{equation}
where ${\bf 1}=(1,\dots,1)\in \mathbb R^m$.
By \cite[Theorem 3.4]{EH-NObodies} there exists a constant $\kappa>0$ such that for all $(1,v)\in \Delta(A,\nu_{1,2})$:
\[
\kappa(\psi_1({\bf 1},v)-\varphi_1({\bf 1},v)) = \psi_2({\bf 1},v)-\varphi_2({\bf 1},v)
\]
We define the piecewise linear wall-crossing maps
\begin{equation}\label{eq:flip and shift}
    \begin{split}
      S_{12}:\mathbb R^d \to \mathbb R^d & \text{ given by }
      ({\bf 1},v,z) \mapsto ({\bf 1},v,\kappa(z-\varphi_1({\bf 1},v))+\varphi_2({\bf 1,v})) \\
      F_{12}:  \mathbb R^d \to \mathbb R^d & \text{ given by } ({\bf 1},v,z) \mapsto ({\bf 1},v,\kappa(\varphi_1({\bf 1},v)-z)+\psi_2({\bf 1,v})).
    \end{split}
\end{equation}
The map $S_{12}$ is called the {\bf shift} and the map $F_{12}$ is called the {\bf flip}.

\begin{Theorem}[Theorem 2.7 in \cite{EH-NObodies}]
Let $I$ be a (multi-)homogeneous ideal in $S$ and $C$ a maximal cone in $\GF(I)$ such that there exist two maximal prime cones $\tau_1,\tau_2\subset C\cap\trop(I)$ that share a common facet $\tau=\tau_1\cap\tau_2$.
Let $\nu_1,\nu_2$ and $\nu_{1,2}$ be the associated homogeneous valuations. Then for $\Phi_{12}\in\{F_{12},S_{12}\}$ the associated Newton--Okounkov polytopes are related by
\[
\xymatrix{
\Delta(A,\nu_1) \ar[rd]_{p_{[d-m-1]}}\ar[rr]^{\Phi_{12}} & & \Delta(A,\nu_2) \ar[ld]^{p_{[d-m-1]}}\\
& \Delta(A,\nu_{1,2})
}
\]
and the Euclidean lengths of the fibers of $p_{[d-m-1]}$ are equal.
\end{Theorem}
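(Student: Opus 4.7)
The plan is to exploit the common standard monomial basis $\mathbb B = \mathbb B_C$ simultaneously adapted to $\nu_1$, $\nu_2$, and $\nu_{1,2}$, which is available because all three cones $\tau_1, \tau_2, \tau$ are faces of the maximal Gröbner cone $C$. By the choice described just before the theorem, I would fix ray matrices $M_1, M_2$ sharing the first $d-1$ rows (generators of $\mathcal L_I$ and of $\tau/\mathcal L_I$), so that $M_{1,2}$ is exactly the common submatrix and the last rows $r, r'$ of $M_1, M_2$ are representatives of the two rays of $C$ lying outside the common facet $\tau$. Each basis element $\mathbf x^a \in \mathbb B$ then acquires three lattice images $M_1 a$, $M_2 a$, and $M_{1,2} a$, all differing only in the last coordinate and all determined by the common exponent $a$.

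Next I would apply Theorem~\ref{thm:NObodies} to obtain projections $p_i : \Delta_{\mathbb B}(A) \to \Delta(A, \nu_i)$ and verify commutativity of the triangle $p_{[d-m-1]} \circ p_1 = p_{[d-m-1]} \circ p_2$, which is immediate since these two compositions both send $\frac{1}{|a|}a \mapsto \frac{1}{|a|} M_{1,2} a$. Fix a point $(\mathbf 1, v) \in \Delta(A, \nu_{1,2})$ and set $F_{\mathbb B}(v) := (p_{[d-m-1]} \circ p_{\mathbb B})^{-1}(\mathbf 1, v)$. Then the fiber of $p_{[d-m-1]}|_{\Delta(A, \nu_i)}$ over $(\mathbf 1, v)$ is the image of $F_{\mathbb B}(v)$ under the last-row map $a \mapsto (r \cdot a)$ or $a \mapsto (r' \cdot a)$ respectively; in particular both fibers are parametrized by the \emph{same} finite set of basis elements with maximal/minimal last-coordinate value.

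The core step is then to identify the affine map relating the two fibers. Here I would use that the pair $\tau_1, \tau_2$ are adjacent prime cones of $\trop(I)$ sharing a facet inside $C$: the corresponding binomial initial ideals are related by an exchange relation on generators crossing the wall $\tau$, which forces the endpoints of the two fibers to correspond under either $F_{12}$ or $S_{12}$. Concretely, the extreme basis elements giving $\psi_1(\mathbf 1, v)$ and $\varphi_1(\mathbf 1, v)$ map under the common parametrization to the extreme basis elements of the $\nu_2$-fiber, and affineness of this correspondence is guaranteed by the Escobar--Harada constant $\kappa$ (cited in the excerpt). Tracing through the definitions of $F_{12}, S_{12}$ in \eqref{eq:flip and shift} and comparing which extremum of the $\nu_1$-fiber corresponds to which extremum of the $\nu_2$-fiber picks out whether the wall-crossing is a flip or a shift.

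The main obstacle I anticipate is twofold. First, determining the sign (flip vs.\ shift) cleanly from the combinatorics of the wall $\tau$ inside $C$ — specifically, whether ordering of basis elements by $r \cdot a$ agrees with ordering by $r' \cdot a$ on $F_{\mathbb B}(v)$ — requires a careful argument, probably using the dual description of adjacency of $\tau_1, \tau_2$ through the monomial initial ideal $\init_C(I)$. Second, the equality of Euclidean fiber lengths demands that the ray representatives $r, r'$ be chosen primitively with respect to the same lattice, so that $\kappa = 1$; I would prove this by using the common lattice structure inherited from $\mathbb B_C \subset \mathbb Z^n$ and the compatibility of the parametrization $\mathbf x^a \mapsto M_i a$, arguing that both endpoints of each fiber are images of integer points $a$ whose difference lies in the kernel of $M_{1,2}$ and is mapped to the same integer by both $r$ and $r'$ up to sign.
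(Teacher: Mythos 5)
Note first that the survey does not prove this statement at all: it is quoted verbatim with its proof deferred to Escobar--Harada \cite{EH-NObodies}, so there is no ``paper proof'' for your argument to parallel; judged on its own terms, your plan has a genuine flaw at its core step. The map you build by matching elements of the common standard monomial basis $\mathbb B_C$ across the two value semigroups is precisely the \emph{algebraic wall-crossing} $A_{12}$ discussed right after diagram \eqref{diagram}, and the survey's own running example (Examples~\ref{exp:wall-crossing} and \ref{exp:algebraic wall-crossing}) shows that $A_{12}$ coincides with neither $F_{12}$ nor $S_{12}$ and that neither flip nor shift makes the upper triangle through $\Delta_{\mathbb B}(A)$ commute. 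In particular, your assertion that the basis elements realizing $\varphi_1({\bf 1},v)$ and $\psi_1({\bf 1},v)$ are carried to the extremes of the $\nu_2$-fiber, and that this correspondence is affine on fibers, is exactly what fails: in the example the basis element with $\nu_1$-value $(1,2)$, an interior point of the fiber over $\Delta(A,\nu_{1,2})=\{1\}$, is sent to the endpoint $(1,1)$ of the $\nu_2$-fiber, so the basis bijection is not affine along fibers and cannot be used to identify $\Phi_{12}$. The actual mathematical content is the existence of the piecewise-linear envelopes $\varphi_i,\psi_i$ and of the constant $\kappa$ with $\kappa(\psi_1-\varphi_1)=\psi_2-\varphi_2$ (Theorem~3.4 of \cite{EH-NObodies}, proved there by a fiberwise volume/degree comparison, both polytopes computing the degree of $\proj(A)$); once that is granted, the commutativity of the diagram and the surjectivity of $S_{12},F_{12}$ onto $\Delta(A,\nu_2)$ are immediate from Equation~\eqref{eq:flip and shift}, since these maps do not move the $({\bf 1},v)$-coordinates---the detour through $\Delta_{\mathbb B}(A)$ and Theorem~\ref{thm:NObodies} is unnecessary and, as the example shows, misleading.

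Your treatment of the ``equal Euclidean lengths'' clause compounds the problem: you propose to prove $\kappa=1$, but this is false in general---in Example~\ref{exp:wall-crossing} one has $\kappa=1/3$ and the two fibers over the same point of $\Delta(A,\nu_{1,2})$ have lengths $3$ and $1$. The clause is to be read as saying that $\Phi_{12}$ maps each fiber of $p_{[d-m-1]}$ in $\Delta(A,\nu_1)$ onto the corresponding fiber in $\Delta(A,\nu_2)$, so that the Euclidean length of the image fiber equals that of the target fiber; this is exactly what the factor $\kappa$ in the definitions of $S_{12}$ and $F_{12}$ arranges, not a statement that corresponding fibers of the two polytopes have the same length. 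A lattice-primitivity argument forcing $\kappa=1$ would therefore be an attempt to prove something the survey's own example refutes. (A smaller technical issue: a fiber over a generic $({\bf 1},v)$ contains no points of the form $\frac{1}{|a|}M_i a$ at all, so ``parametrizing the fiber by basis elements'' needs a density-and-convexity argument that your sketch does not supply; but the decisive obstruction is the non-affineness above.)
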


\begin{Example}\label{exp:wall-crossing}
In our running example the maximal cone $C\in \GF(I)$ has two maximal prime cones in $\trop(I)$ as facets. Let $\tau_1\in \trop(I)$ be the cone generated by $(2,1,1)^T\mod \mathcal L_I$ (teal in Figure~\ref{fig:example Gfan}) and $\tau_2$ be the cone generated by $(2,3,0)^T\mod \mathcal L_I$.
Then
\[
\Delta(A,\nu_1)=\conv\left(\binom{1}{0},\binom{1}{3}\right) \quad \text{and} \quad \Delta(A,\nu_2)=\conv\left(\binom{1}{0},\binom{1}{1}\right).
\]
The Newton--Okounkov polytope $\Delta(A,\nu_{1,2})$ is simply the point $\{1\}\in \mathbb R$. Hence, the piecewise linear functions $\varphi_i,\psi_i$ are constants:
\[\begin{split}
\Delta(A,\nu_1)=\{(1,z):\varphi_1(1):=0\le z\le 3=:\psi_1(1)\},\\
\Delta(A,\nu_2)=\{(1,z): \varphi_2(1):=0\le z\le 1=:\psi_2(1)\}.
\end{split}
\]
The global constant can be computed from the volume of the Newton--Okounkov polytopes (with respect to the ambient subspace where they are full-dimensional poyltopes). We have that $\kappa_1{\rm vol}(\Delta(A,\nu_1))=\kappa_2{\rm vol}(\Delta(A,\nu_2))=\deg(y^2z-x^3+z^3)$ and $\kappa=|\kappa_1/\kappa_2|=\frac{1}{3}$, so
\[
S_{12}:(1,z)\mapsto \left(1,\frac{z}{3}\right), \quad F_{12}:(1,z)\mapsto \left(1,1-\frac{z}{3}\right).
\]
\end{Example}

\section{Families of Gröbner degenerations}\label{sec:families of gröbner}
In this section, we recall the main construction of the paper \cite{BMN}. It gives a multi-parameter flat family associated to a maximal cone $C\in \GF(I)$ where $I\subset S$ is a homogeneous ideal. 
We will see that this algebraic construction is closely related to the polyhedral objects from the previous section.
\medskip

Let $A$ be the quotient $S/I$. Recall the classical construction of a Gröbner degenerations associated to w a weight vector $w\in C^\circ$ from \S\ref{sec:initial ideals and trop} defined by the quotient $S[t]/I^{h;w}$.
The ideal $I^{h;w}$ defines the flat family ${\spec}(S[t]/I^{h;w}) \to {\spec}(k[t])$ whose fiber over the closed point $(t)$ is isomorphic to ${\spec}(A^w)$, where $A^w:=S/\init_w(I)$ and the fiber over any non-zero closed point $(t-c)$ is isomorphic to ${\spec}(A)$. 
Both, the construction of $I^{h;w}$ and \cite[Theorem 15.17]{Eisenbud} hold for arbitrary cones in $\GF(I)$.
In what follows, for simplicity, we focus on maximal cones as the generalization to lower dimensional ones is straight forward.

To generalize the construction of $I^{h;w}$ we fix vectors $r_1, \dots , r_{n_C} \in C$ such that $\{ \overline{r}_{1},\dots, \overline{r}_{n_C}\}$ is the set of primitive ray generators for $\overline{C}$, which is possible due to \cite[Lemma 2.13]{BMN}.
Let $M$ be the $(n_C \times n)$-matrix whose rows are $r_1,\dots, r_{n_C}$.  
Additionally, we write $<$ for a monomial term order compatible with $C$ and denote by $G$ the associated reduced Gr\"obner basis.

\begin{Definition}\thlabel{def:lift}
For $f=\sum_{\alpha \in \mathbb Z^n_{\ge 0}} c_\alpha {\bf x}^{\alpha} \in I$ set ${\mu}_M(f):=(\max_{c_\alpha\not =0}\{r_i\cdot \alpha\})_{i=1,\dots,n_C}\in \mathbb Z^{n_C\times 1}$, hence $\mu_M(f)$ as a column vector with $n_C$ entries.
Define the {\bf lift of $f$} as the polynomial $\tilde f_M \in S[t_1,\dots,t_{n_C}]$ given by the following formula 
\begin{displaymath}
    \tilde f_M := \tilde f_M ({\bf t},{\bf x}) 
    := f({\bf t}^{-M\cdot e_1}x_1,\dots, {\bf t}^{-M\cdot e_n}x_n) {\bf t}^{\mu_M(f)}
    =\sum_{\alpha \in \mathbb Z^n_{\ge 0}} c_\alpha {\bf x}^{\alpha} {\bf t}^{-M\cdot \alpha +\mu_M(f)}.
\end{displaymath}
Similarly, we define the {\bf lifted ideal} as $\tilde I_M:=\left\langle \tilde f_M : f \in I \right\rangle \subset S[t_1,\dots,t_{n_C}]$ and the {\bf lifted algebra} as the quotient 
\begin{equation}\label{eq:lifted algebra}
\tilde A_M:=S[t_1,\dots,t_{n_C}]/\tilde I_M.
\end{equation}
\end{Definition}
Although by construction the lifted algebra depends on the choice of ray matrix $M$ it can be shown that different choices yield the same algebra \cite[Corollary 3.10]{BMN}.
Another useful result about the lifted ideal $\tilde I_M$ is an explicit construction of a Gröbner basis. 
On $S[t_1,\dots,t_{n_C}]$ we consider the following term order induced by the term order $<$ on $S$ corresponding to $C$:
\begin{equation}\label{eq:<<}
{\bf x}^\alpha{\bf t}^{\lambda}\ll{\bf x}^\beta{\bf t}^{\mu}\quad\text{if and only if}\quad  \text{(i)}\ {\bf x}^\alpha<{\bf x}^\beta \quad \text{or}\quad \text{(ii)}\ {\bf x}^\alpha={\bf x}^\beta\ \text{and}\ {\bf t}^{\lambda}<_{\rm lex}{\bf t}^{\mu}.
\end{equation}
Then by \cite[Proposition 3.9]{BMN} the lifts of the elements of a Gröbner basis $G$ of $I$ with respect to $<$ form a Gröbner basis for $\tilde I_M$ with respect to $\ll$.
The main result is proven using this Gröbner basis and the standard monomial basis $\mathbb B_C$ of $A$:

\begin{Theorem}[Theorem 3.14 in \cite{BMN}]
\thlabel{thm:family}
Let $I$ be a homogeneous ideal in $S$, $A=S/I$, $C$ a maximal cone in $\GF(I)$ with associated term order $<$ and $M$ an $(m\times n)$-matrix whose rows are representatives of the primitive ray generators of $\overline{C}\subset \mathbb R^n/\mathcal L(I)$. Then:
\begin{itemize}
    \item[(i)] The algebra $\tilde A_M$ is a free $S$-module with basis $\mathbb B=\mathbb B_C$, the standard monomial basis of $A$ with respect to $\init_C(I)$. 
    In particular, we have a flat family 
    \begin{displaymath}
    \xymatrix{ {\rm Proj}(\tilde A_M) \ar@{^{(}->}[r]\ar[d]^\pi & \mathbb P^{n-1}\times \mathbb A^{n_C} \ar@{->>}[ld]\\
    \mathbb A^{n_C} &}
    \end{displaymath}
    \item[(ii)] For every face $\tau$ of $C$ there exists ${\bf a}_\tau\in \mathbb A^{n_C}$ such that
    $\pi^{-1}({\bf a}_\tau) = {\rm Proj}(S/\init_\tau(I))$. 
    In particular, generic fibers are isomorphic to ${\rm Proj}(A)$ and there exist special fibers for every proper face $\tau\subset C$.
\end{itemize}
\end{Theorem}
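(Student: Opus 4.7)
The plan is twofold: for part (i), use the term order $\ll$ from \eqref{eq:<<} together with the Gr\"obner basis of $\tilde I_M$ provided by the lifts of a Gr\"obner basis of $I$; for part (ii), exhibit explicit points ${\bf a}_\tau \in \mathbb A^{n_C}$ at which substitution into the lift recovers $\init_\tau(I)$.

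For part (i), the key input is the cited fact \cite[Proposition 3.9]{BMN} that for a reduced Gr\"obner basis $G$ of $I$ with respect to $<$, the set $\{\tilde g_M : g \in G\}$ is a Gr\"obner basis of $\tilde I_M$ with respect to $\ll$. The essential calculation is to show that $\init_\ll(\tilde g_M) = \init_<(g)$, so that this leading term lies purely in $k[{\bf x}]$. Indeed, writing $g = \sum c_\alpha {\bf x}^\alpha$ with $\init_<(g) = c_\beta {\bf x}^\beta$, compatibility of $<$ with $C$ forces $r_i \cdot \beta = \mu_M(g)_i$ for every row $r_i$ of $M$, so the exponent $-M\beta + \mu_M(g)$ vanishes and ${\bf x}^\beta$ appears in $\tilde g_M$ without a ${\bf t}$ factor. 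Collecting over $G$ yields $\init_\ll(\tilde I_M) = \init_C(I) \cdot S[t_1,\dots,t_{n_C}]$, whose $\ll$-standard monomials decompose as $\{{\bf x}^\alpha {\bf t}^\gamma : {\bf x}^\alpha \in \mathbb B_C,\, \gamma \in \mathbb Z^{n_C}_{\geq 0}\}$. Reading this as a $k[t_1,\dots,t_{n_C}]$-module gives $\mathbb B_C$ as a free basis of $\tilde A_M$, hence flatness of $\pi$. The $\proj$ statement follows because the lift preserves ${\bf x}$-homogeneity.

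For part (ii), define ${\bf a}_\tau \in \mathbb A^{n_C}$ by $({\bf a}_\tau)_i = 0$ exactly when the ray $r_i$ lies in $\tau$, and $({\bf a}_\tau)_i = 1$ otherwise. Substituting into a lift one obtains
\begin{displaymath}
\tilde f_M({\bf a}_\tau, {\bf x}) = \sum_{\alpha \,:\, r_i \cdot \alpha = \mu_M(f)_i \text{ for all } r_i \in \tau} c_\alpha {\bf x}^\alpha,
\end{displaymath}
since a monomial survives exactly when every ${\bf t}$-exponent at a vanishing coordinate of ${\bf a}_\tau$ is already zero. For any $w$ in the relative interior of $\tau$, writing $w$ as a positive combination of its ray generators shows that $w\cdot\alpha$ is maximal on $\supp(f)$ precisely when each $r_i\cdot\alpha$ with $r_i \in \tau$ attains its maximum, so the right-hand side is $\init_\tau(f)$. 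Consequently $\pi^{-1}({\bf a}_\tau) = \proj(S/\init_\tau(I))$. Taking $\tau = \{0\}$ gives ${\bf a}_{\{0\}} = (1,\dots,1)$, where $\tilde f_M({\bf 1}, {\bf x}) = f$; the change of variables $x_i \mapsto {\bf t}^{M e_i} x_i$ then extends this to the statement that every fiber over the open torus $(k^*)^{n_C} \subset \mathbb A^{n_C}$ is isomorphic to $\proj(A)$.

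The main obstacle is the leading-term computation in part (i): one must use compatibility of $<$ with $C$ to verify that lifting does not introduce ${\bf t}$ factors into $\init_<(g)$. Once this is in hand, the rest of (i) is standard Gr\"obner-theoretic bookkeeping, and (ii) amounts to the direct translation between the face lattice of $C$ and the coordinate vanishing loci in $\mathbb A^{n_C}$ encoded by the recipe for ${\bf a}_\tau$.
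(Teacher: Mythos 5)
Your part (i) is essentially the intended argument: invoke the lifted Gr\"obner basis of $\tilde I_M$ with respect to $\ll$, check that for $g$ in the reduced Gr\"obner basis the $<$-leading exponent $\beta$ satisfies $r_i\cdot\beta=\mu_M(g)_i$ for every row (this is exactly the standard description of the closed Gr\"obner cone of $<$ by the inequalities coming from the reduced Gr\"obner basis, so your appeal to compatibility of $<$ with $C$ is legitimate), conclude $\init_\ll(\tilde I_M)=\init_C(I)\cdot S[t_1,\dots,t_{n_C}]$, and read off $\mathbb B_C$ as a free module basis over $k[t_1,\dots,t_{n_C}]$ (note the theorem's ``free $S$-module'' should indeed be read as freeness over the coefficient ring in the $t$-variables, as you do), whence flatness.

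In part (ii) there is a genuine gap. The displayed identity $\tilde f_M({\bf a}_\tau,{\bf x})=\sum_{\alpha:\,r_i\cdot\alpha=\mu_M(f)_i\ \forall r_i\in\tau}c_\alpha{\bf x}^\alpha$ is correct, but the next claim --- that for $w$ in the relative interior of $\tau$ the $w$-maximal exponents of $f$ are \emph{precisely} those maximizing every $r_i\cdot\alpha$ with $r_i\in\tau$ --- is false for a general $f\in I$: it holds only when $\supp(f)$ contains an exponent that simultaneously maximizes all these linear forms. If no such common maximizer exists, the right-hand side is $0$ while $\init_\tau(f)\neq 0$ (already for a principal ideal one can multiply the generator by a suitable polynomial to destroy the common maximizer), so your substitution formula does not compute $\init_\tau(f)$ for arbitrary elements, and the inclusion $\init_\tau(I)\subseteq$ (fiber ideal) does not follow as stated. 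The repair uses exactly the machinery you set up in (i): since the lifts $\tilde g_M$, $g\in G$, form a Gr\"obner basis (hence a generating set) of $\tilde I_M$, it suffices to substitute ${\bf a}_\tau$ into these; for $g\in G$ the leading exponent $\beta$ \emph{is} a common maximizer of all rows of $M$, so $\tilde g_M({\bf a}_\tau,{\bf x})=\init_\tau(g)$, and then the standard fact that $\init_w(I)=(\init_w(g):g\in G)$ for every $w$ in the closed cone $C$ gives both inclusions and hence $\pi^{-1}({\bf a}_\tau)=\proj(S/\init_\tau(I))$. Two smaller points: a face $\tau$ of $C$ contains the lineality space $\mathcal L_I$, so $w$ in its relative interior is a positive combination of the rays only modulo $\mathcal L_I$ (harmless, since initial ideals are invariant under translating $w$ by $\mathcal L_I$, but it should be said), and correspondingly the minimal face is $\mathcal L_I$ rather than $\{0\}$; your torus-rescaling argument for the generic fibers is fine.
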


\begin{Example}\label{example:family}
In our running example $I=(f:=y^2z-x^3+z^3)$ the generator $f$ itself forms a Gröbner basis for every maximal cone in $\GF(I)$ (which is true more generally for hypersurfaces). 
Consider the maximal cone $C\in \GF(I)$ with its ray matrix $M_C$ from Example~\ref{example: standard mono basis}. 
For this construction we may omit the rays of $M_C$ coming from the lineality space. So that $ M=\left(\begin{smallmatrix}2&3&0\\1&0&1\end{smallmatrix}\right)$
Then $\mu_{M}(f)=\binom{6}{3}$ and 
\[
\tilde f_M=f(xt_0^{-1}t_1^{-2}t_2^{-1},yt_0^{-1}t_1^{-3},zt_0^{-1}t_2^{-1})t_0^3t_1^6t_2^3 = y^2zt_2^2-x^3+z^3t_1^6.
\]
Recall the two maximal prime cones $\tau_1,\tau_2\in \trop(I)$ from Example~\ref{exp:wall-crossing}. 
We have
\[
\tilde f_M\vert_{(t_1,t_2)=(0,1)} = y^2z-x^3=\init_{\tau_1}(f) \quad \text{and }\quad \tilde f_M\vert_{(t_1,t_2)=(1,0)} = -x^3+z^3=\init_{\tau_2}(f).
\]
\end{Example}

Notice that Theorem~\ref{thm:NObodies} may be interpreted as a polyhedral version of Theorem~\ref{thm:family}.
Let $C\in \GF(I)$ be the maximal cone and let $\tau_1,\tau_2$ be faces of $C$ that are maximal prime cones in $\trop(I)$.
Denote $X_1:=\proj(A_{\tau_1})$ and $X_2:=\proj(A_{\tau_2})$ and let $\nu_1$ and $\nu_2$ be the associated valuations. Recall that $\Delta(A,\nu_1)$ resp. $\Delta(A,\nu_2)$ is the polytope of the normalization of $X_1$ resp. $X_2$. We have the following diagrams
\[
\xymatrix{
X_1 \ar@{^{(}->}[r]\ar[d] & \proj(\tilde A_M)\ar[d]^\pi & X_2 \ar@{_{(}->}[l]\ar[d]\\
\{{\bf a}_{\tau_1}\}  \ar@{^{(}->}[r] & \mathbb A^{n_C} & \{{\bf a}_{\tau_2}\}  \ar@{_{(}->}[l]
} \quad 
\xymatrix{\Delta(A,\nu_1) & & \Delta(A,\nu_2)\\
& \Delta_{\mathbb B}(A)\subset \mathbb R^{n_C} \ar@{->>}[ul]^{p_1} \ar@{->>}[ur]_{p_1}
}
\]
If $\tau_1$ and $\tau_2$ are adjacent in the sense that $\tau:=\tau_1\cap \tau_2$ is a common facet of both we additionally have access to the wall-crossing formulas $F_{12}$ and $S_{12}$ defined in Equation \eqref{eq:flip and shift}. Let $\Phi_{12}\in \{F_{12},S_{12}\}$:
\begin{equation}\label{diagram}
\xymatrix{
& \Delta_{\mathbb B}(A) \ar@{->>}[dl]^{p_1} \ar@{->>}[dr]_{p_2}\\
\Delta(A,\nu_1)\ar[rr]_{\Phi_{12}}\ar[dr]_{p_{[d-m-1]}} & & \Delta(A,\nu_2)\ar[dl]^{p_{[d-m-1]}}\\
& \Delta(A,\nu_{1,2}) &
}
\end{equation}
The upper triangle of the diagram is not necessarily commutative as the following example shows: 

\begin{Example}\label{exp:algebraic wall-crossing}
Let $\tau_1,\tau_2$ and $C$ be as in the previous examples. We have previously computed the polytopes $\Delta_{\mathbb B}(A)=\conv\left(\left(\begin{smallmatrix}1\\2\\1\end{smallmatrix}\right),\left(\begin{smallmatrix}1\\3\\0\end{smallmatrix}\right),\left(\begin{smallmatrix}1\\0\\1\end{smallmatrix}\right)\right),\Delta(A,\nu_1)=\conv\left({ \binom{1}{2},\binom{1}{3},\binom{1}{0}}\right),$  and $ \Delta(A,\nu_2)=\conv\left(\binom{1}{1},\binom{1}{0},\binom{1}{1}\right)$,
where all the integral points are written in the order of the variables $x,y,z$ that induce them.
Recall the maps $S_{12}$ and $F_{12}$ from Example~\ref{exp:wall-crossing}. We have
\[\begin{split}
S_{12}: \binom{1}{2}\mapsto \binom{1}{2/3}, \ \binom{1}{3}\mapsto \binom{1}{1}, \ \binom{1}{0}\mapsto \binom{1}{0},\\
F_{12}: \binom{1}{2}\mapsto \binom{1}{1/3}, \ \binom{1}{3}\mapsto \binom{1}{0}, \ \binom{1}{0}\mapsto \binom{1}{1}.
\end{split}
\]
So in this case neither $F_{12}$ nor $S_{12}$ make the upper triangle in \eqref{diagram} commute.
\end{Example}

However, the bijections $\mathbb B\leftrightarrow S(A,\nu_i)$ for $i\in \{1,2\}$ induce a map  $A_{12}:S(A,\nu_1)\to S(A,\nu_2)$ that makes the upper triangle in \eqref{diagram} commutative. This map is called the {\bf algebraic wall-crossing} in \cite{EH-NObodies} and it is simply the composition of the bijections.
In special cases the algebraic wall-crossing coincides with the flip map. An example is given in \cite[\S5]{EH-NObodies} where the authors show this is the case for the Grassmannian of planes.
Moreover, in this case the flip map is the {\em Fock--Goncharov tropicalization} (see e.g. \cite[Remark 2.3]{GHK_birat}) of Fomin--Zelevinsky's {\em cluster mutation} (see e.g. \cite{FZ02}) as is shown in \cite[\S4.6]{BMN}.

\begin{Example}
The algebraic wall-crossing $A_{12}:S(A,\nu_1)\to S(A,\nu_2)$ is different from both of them. It is simply a bijection of sets induced by the bijections of $\mathbb B$ with $S(A,\nu_1)$ and $S(A,\nu_2)$. In particular, we have
\[
A_{12}: \binom{1}{2}\mapsto \binom{1}{1}, \ \binom{1}{3}\mapsto \binom{1}{0},\ \binom{1}{0}\mapsto \binom{1}{1}.
\]
\end{Example}

\section{Example: the Grassmannian \texorpdfstring{$\Gr_3(\mathbb C^6)$}{of 3-planes in six-spac ec}}\label{sec:Gr36}
In this section we apply the Proposition~\ref{prop:different trop} to the Grassmannian $\Gr_3(\mathbb C^6)$, or more precise to its homogeneous coordinate ring with respect to the Plücker embedding. 
The {\bf Plücker algebra} $A_{3,6}$ is quotient of the polynomial ring
\[
S:=\mathbb C[p_{ijk}:1\le i<j<k\le 6]
\]
by the {\bf Plücker ideal} $I_{3,6}$ that is generated minimally by the following 34 Plücker relations:
{\small
\begin{align*}
p_{256}p_{346} - p_{246}p_{356} + p_{236}p_{456} &&
p_{156}p_{346} - p_{146}p_{356} + p_{136}p_{456} &&
p_{256}p_{345} - p_{245}p_{356} + p_{235}p_{456} \\
p_{246}p_{345} - p_{245}p_{346} + p_{234}p_{456} &&
p_{236}p_{345} - p_{235}p_{346} + p_{234}p_{356} &&
p_{156}p_{345} - p_{145}p_{356} + p_{135}p_{456} \\
p_{146}p_{345} - p_{145}p_{346} + p_{134}p_{456} &&
p_{136}p_{345} - p_{135}p_{346} + p_{134}p_{356} &&
p_{156}p_{246} - p_{146}p_{256} + p_{126}p_{456} \\
p_{236}p_{245} - p_{235}p_{246} + p_{234}p_{256} &&
p_{156}p_{245} - p_{145}p_{256} + p_{125}p_{456} &&
p_{146}p_{245} - p_{145}p_{246} + p_{124}p_{456} \\
p_{126}p_{245} - p_{125}p_{246} + p_{124}p_{256} &&
p_{156}p_{236} - p_{136}p_{256} + p_{126}p_{356} &&
p_{146}p_{236} - p_{136}p_{246} + p_{126}p_{346} \\
p_{156}p_{235} - p_{135}p_{256} + p_{125}p_{356} &&
p_{145}p_{235} - p_{135}p_{245} + p_{125}p_{345} &&
p_{136}p_{235} - p_{135}p_{236} + p_{123}p_{356} \\
p_{126}p_{235} - p_{125}p_{236} + p_{123}p_{256} &&
p_{146}p_{234} - p_{134}p_{246} + p_{124}p_{346} &&
p_{145}p_{234} - p_{134}p_{245} + p_{124}p_{345} \\
p_{136}p_{234} - p_{134}p_{236} + p_{123}p_{346} &&
p_{135}p_{234} - p_{134}p_{235} + p_{123}p_{345} &&
p_{126}p_{234} - p_{124}p_{236} + p_{123}p_{246} \\
p_{125}p_{234} - p_{124}p_{235} + p_{123}p_{245} &&
p_{136}p_{145} - p_{135}p_{146} + p_{134}p_{156} &&
p_{126}p_{145} - p_{125}p_{146} + p_{124}p_{156} \\
p_{126}p_{135} - p_{125}p_{136} + p_{123}p_{156} &&
p_{126}p_{134} - p_{124}p_{136} + p_{123}p_{146} &&
p_{125}p_{134} - p_{124}p_{135} + p_{123}p_{145} 
\end{align*}
\begin{displaymath}
\begin{split}
p_{126}p_{345} - p_{125}p_{346} + p_{124}p_{356} - p_{123}p_{456}& \quad
p_{136}p_{245} - p_{135}p_{246} + p_{134}p_{256} + p_{123}p_{456}\\
p_{146}p_{235} - p_{135}p_{246} + p_{125}p_{346} + p_{123}p_{456}& \quad
p_{156}p_{234} - p_{134}p_{256} + p_{124}p_{356} - p_{123}p_{456}\\
p_{145}p_{236} - p_{135}p_{246}  + p_{125}p_{346} + p_{156}p_{234}
\end{split}
\end{displaymath}}

The tropicalization of $I_{3,6}$ was computed back in 2004 by Speyer and Sturmfels who found out that there are several maximal prime cones in $\trop(I_{3,6})$, \cite[\S5]{SS04}. We briefly summarize their findings.

\begin{Theorem}
The tropical Grassmannian $\trop(I_{3,6})\subset \mathbb R^{20}/\mathcal L_{I_{3,6}}$ is a four-dimensional fan with 1005 maximal cones, 990 of which are prime.
The 65 rays consist of 
\begin{enumerate}
    \item 20 standard basis elements $e_{ijk}$, $1\le i<j<k\le 6$;
    \item 15 vectors of form $f_{ij}=\sum_{k\not\in \{i,j\}}e_{ijk}$, $1\le i<j\le 6$;
    \item 30 vectors associated with 15 tripartitions $\{\{i_1,i_2\},\{i_3,i_4\,\{i_5,i_6\}\}$ of $[6]$ each defining two rays
    $g_{i_1i_2i_3i_4i_5i_6}:= f_{i_5i_6}+e_{i_3i_4i_5}+e_{i_3i_4i_6}$  \text{and}  $g_{i_1i_2i_5i_6i_3i_4}:= f_{i_3i_4}+e_{i_3i_5i_6}+e_{i_4i_5i_6}$. 
\end{enumerate}
The symmetric group action on the index sets of Plücker coordinates induces a symmetry on $\trop(I_{3,6})$. The maximal cones are grouped in seven orbits, six of which consist of prime cones:
\begin{itemize}
    \item[EEEE] there are 30 simplicial prime cones of type $\{e_{123}, e_{145}, e_{246}, e_{356}\}$;
    \item[EEFF1] there are 90 simplicial prime cones of type $\{e_{123},e_{456}, f_{56}, f_{12}\}$;
    \item[EEFF2] there are 90 simplicial prime cones of type $\{e_{125},e_{345}, f_{12}, f_{34}\}$;
    \item[EFFG] there are 180 simplicial prime cones of type $\{e_{345}, f_{34}, f_{12},g_{123456}\}$;
    \item[EEEG] there are 240 simplicial prime cones of type $\{e_{126},e_{134},e_{356},g_{125634}\}$;
    \item[EEFG] there are 360 simplicial prime cones of type $\{e_{234}, e_{125}, f_{34}, g_{125634}\}$;
    \item[FFFGG] there are 15 non-simplicial non-prime cones of type $\{f_{56}, f_{34}, f_{12},g_{123456},g_{125634}\}$.
\end{itemize}
\end{Theorem}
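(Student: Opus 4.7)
\medskip
\noindent\textbf{Proof proposal.}
The plan is to pin down the dimension and lineality space first, enumerate the rays via a combinatorial interpretation of weight vectors, and then classify the maximal cones by running a local-traversal computation modulo the $S_6$-symmetry, checking primality on orbit representatives one at a time.

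The dimension follows from the result cited at the end of \S\ref{sec:initial ideals and trop}: $\dim\trop(I_{3,6})=\dim_{\rm Krull}(A_{3,6})=10$, since $\Gr_3(\mathbb C^6)$ has dimension nine. The Plücker algebra carries a natural $\mathbb Z^6$-grading with $\deg(p_{ijk})=e_i+e_j+e_k$, and the same argument as in Lemma~\ref{lem:grading and lineality} produces six linearly independent vectors $\ell_i:=\sum_{j<k,\,j,k\neq i}e_{ijk}$ inside $\mathcal L_{I_{3,6}}$. That the lineality has exactly dimension six then follows from identifying it with the character lattice of the natural torus acting on $A_{3,6}$, so $\trop(I_{3,6})/\mathcal L_{I_{3,6}}$ is four-dimensional as claimed.

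To enumerate the rays, I would identify $w\in\trop(I_{3,6})$ with a valuated matroid of rank three on $[6]$, so that $\init_w(I_{3,6})$ contains no monomial iff $w$ defines a tropical linear space. Rays correspond to the combinatorially coarsest such matroids: single-basis matroids yield the twenty $e$-rays, parallel-class matroids yield the fifteen $f$-rays, and the tripartition-induced split matroids produce thirty $g$-rays (two per tripartition). A direct enumeration over subsets of $[6]$ then yields $20+15+30=65$ rays.

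The $1005$ maximal cones are produced by the local traversal algorithm of \cite{ComputingTrop_BJSST}, starting from a single cone of type EEEE and exploiting that $\trop(I_{3,6})$ is connected in codimension one because $I_{3,6}$ is prime. The $S_6$-action cuts down the work: one picks a representative per combinatorial type of admissible $4$-tuple of rays, verifies it spans a cone of $\trop(I_{3,6})$ by checking the corresponding initial ideal is monomial-free, and computes its stabilizer in $S_6$. The orbit sizes $30,90,90,180,240,360,15$ (summing to $1005$) then drop out of the orbit--stabilizer theorem. For the six simplicial orbits, primality on a representative $\tau$ is verified by computing $\init_\tau(I_{3,6})$ from a Gröbner basis at a weight in $\tau^\circ$, checking the resulting ideal is binomial, and identifying it as the toric ideal of the polytope spanned by the columns of $M_\tau$ via Corollary~\ref{cor:NO columns}. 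The main obstacle is the FFFGG orbit: one must exhibit an explicit factorization or a witness pair of non-Plücker binomials showing $\init_\tau(I_{3,6})$ is reducible, which is the most hands-on computation in the proof.
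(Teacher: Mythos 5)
The first thing to say is that the paper contains no proof of this statement: it is quoted from Speyer--Sturmfels \cite[\S 5]{SS04}, where it is established by an explicit, computer-assisted computation of $\trop(I_{3,6})$ carried out modulo the $S_6$-symmetry. Your plan --- fix dimension and lineality, enumerate rays, traverse the maximal cones up to symmetry, and certify primality on orbit representatives --- is essentially the same computational route as the source, and your dimension/lineality paragraph is fine: $\dim_{\rm Krull}(A_{3,6})=10$ together with the six-dimensional lineality space gives the stated four-dimensional quotient.

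Two steps are genuinely gappy as written, however. First, your ray enumeration identifies points of $\trop(I_{3,6})$ with valuated matroids (tropical linear spaces). The monomial-free condition characterizes membership in $\trop(I_{3,6})$, whereas ``satisfies the tropical Pl\"ucker relations'' is only the prevariety (Dressian) condition; the two agree for $(3,6)$, but that agreement is itself a consequence of the Speyer--Sturmfels computation (and it already fails for $\Gr_3(\mathbb C^7)$, where non-realizable valuated matroids appear), so you cannot assume it in order to read off the $65$ rays --- you would have to prove realizability of all rank-$3$ valuated matroids on $[6]$ separately, or extract the rays from the traversal itself; likewise ``rays $=$ coarsest matroids'' needs an actual argument (rays correspond to coarsest matroid subdivisions of the hypersimplex, and producing the list $20+15+30$ is real work, not a direct enumeration over subsets of $[6]$). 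Second, your primality check invokes Corollary~\ref{cor:NO columns}, but that corollary presupposes that the cone is prime, so using it to certify primality is circular. The correct check is direct: compute $\init_\tau(I_{3,6})$ at a weight in $\tau^\circ$, verify it is binomial, and verify it equals the toric ideal (the kernel of the associated monomial map, e.g.\ via a lattice-ideal saturation), while for the FFFGG representative one exhibits $f,g\notin\init_\tau(I_{3,6})$ with $fg\in\init_\tau(I_{3,6})$. Finally, the orbit sizes and the completeness of the seven-orbit classification do not simply ``drop out'' of orbit--stabilizer: they rest on the traversal being exhaustive, which (together with connectivity in codimension one from \cite{ComputingTrop_BJSST}) is exactly the content of the computer calculation being cited.
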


Many families of (full rank homogeneous) valuations are known for $A_{3,6}$ (see e.g. \cite{B-birat,FFL_birat,MoSh} ) and whenever the Plücker coordinates form a Khovanskii basis there is a unique maximal prime cone in $\trop(I_{3,6})$ associated with it by Theorem~\ref{thm:val and trop}.
It is however not true that all \emph{known} valuations on $A_{3,6}$ share the Plücker coordinates as a Khovanskii basis.
For an example, you may want to consider \cite[\S9]{RW17} where Rietsch and Williams exihibit an example of a valuation induced by a \emph{plabic graph}. 
This example can be generalized to higher Grassmannians, see \cite[\S5]{Bos20_fullrank}. 
For more details, including background on how to obtain valuations from plabic graphs we refer the reader to the mentioned references.

\begin{Example}\label{exp:bad plabic val}
Let $\nu_G:A_{3,6}\to \mathbb Z^9$ be the Rietsch--Williams valuation associated with the plabic graph on the left in Table~\ref{tab:3-6}.
The values of Plücker coordinates under $\nu_G$ can be found in Table~\ref{tab:3-6}.
The valuation is Khovanskii-finite, but the Plücker coordinates do not form a Khovanskii basis. Among the vertices of $\Delta(A_{3,6},\nu_G)$ there is one non integral of form
\[
\left(\frac{1}{2},\frac{1}{2},\frac{3}{2},\frac{1}{2},\frac{1}{2},1,\frac{3}{2},1,\frac{1}{2}\right).
\]
It is obtained from the ray of the Newton--Okounkov cone generated by $\nu_G(\bar p_{124}\bar p_{356}-\bar p_{123}\bar p_{456})$.
\end{Example}

The aim of this section is to illustrate how Proposition~\ref{prop:different trop} can be applied to find an appropriate tropicalization where the Khovanskii-finite valuation $\nu_G$ appears as associated with a maximal prime cone.

\begin{table}
    \centering
\begin{tabular}{ll}    
\begin{tikzpicture}
[scale=.4,decoration={markings, 
    mark= at position 0.5 with {\arrow{stealth}}}
] 
\draw (0,0) circle [radius=5];  
\draw[postaction={decorate}] (1,2) -- (-1,2);
\draw[postaction={decorate}] (1,2) -- (2.25,0);
\draw[postaction={decorate}] (2.25,0) -- (1,-2);
\draw[postaction={decorate}] (1,-2)-- (-1,-2);
\draw[postaction={decorate}] (-2.25,0) -- (-1,-2);
\draw[postaction={decorate}] (-1,2) -- (-2.25,0);
\draw[postaction={decorate}] (-1,3.5) -- (-1,2);
\draw[postaction={decorate}] (-1,3.5)-- (1,3.5);
\draw[postaction={decorate}] (1,3.5) -- (1,2);
\draw[postaction={decorate}] (3.5,-1) -- (2.25,0);
\draw[postaction={decorate}] (3.5,-1) -- (2.375,-3);
\draw[postaction={decorate}] (1,-2) -- (2.375,-3);
\draw[postaction={decorate}] (-2.25,0) -- (-3.5,-1);
\draw[postaction={decorate}] (-2.375,-3) -- (-3.5,-1);
\draw[postaction={decorate}] (-1,-2) -- (-2.375,-3);
\draw[postaction={decorate}] (-1,4.9) -- (-1,3.5);
\draw[postaction={decorate}] (1,4.9) -- (1,3.5);
\draw[postaction={decorate}] (4.7,-1.75) -- (3.5,-1);
\draw[postaction={decorate}] (2.375,-3) -- (3.35,-3.75);
\draw[postaction={decorate}] (-3.5,-1) -- (-4.7,-1.75);
\draw[postaction={decorate}] (-2.375,-3) -- (-3.35,-3.75);

\draw[fill] (1,3.5) circle [radius=.175];  
\draw[fill] (-1,2) circle [radius=.175];  
\draw[fill] (2.25,0) circle [radius=.175];  
\draw[fill] (2.375,-3) circle [radius=.175];  
\draw[fill] (-1,-2) circle [radius=.175];  
\draw[fill] (-3.5,-1) circle [radius=.175];  

\draw[fill, white] (-1,3.5) circle [radius=.175];  
\draw (-1,3.5) circle [radius=.175];  
\draw[fill, white] (1,2) circle [radius=.175];  
\draw (1,2) circle [radius=.175];  
\draw[fill, white] (3.5,-1) circle [radius=.175];  
\draw (3.5,-1) circle [radius=.175];
\draw[fill, white] (1,-2) circle [radius=.175];  
\draw (1,-2) circle [radius=.175];  
\draw[fill, white] (-2.25,0) circle [radius=.175];  
\draw (-2.25,0) circle [radius=.175];  
\draw[fill, white] (-2.375,-3) circle [radius=.175];  
\draw (-2.375,-3) circle [radius=.175];  

\node[above] at (-1,4.9) {1};
\node[above] at (1,4.9) {2};
\node[right] at (4.7,-1.75) {3};
\node[right] at (3.35,-3.75) {4};
\node[left] at (-4.7,-1.75) {6};
\node[left] at (-3.35,-3.75) {5};

\node at (0,0) {\scalebox{.3}{ $\yng(2,1)$}};
\node at (0,2.75) {\scalebox{.3}{$\yng(2)$}};
\node at (0,4.25) {\scalebox{.3}{$\yng(3)$}};
\node at (3,1.75) {\scalebox{.3}{$\yng(3,3)$}};
\node at (2.3,-1.5) {\scalebox{.3}{$\yng(3,3,2)$}};
\node at (3.7,-2.25) {\scalebox{.3}{$\yng(3,3,3)$}};
\node at (0,-3.5) {\scalebox{.3}{$\yng(2,2,2)$}};
\node at (-2.3,-1.5) {\scalebox{.3}{$\yng(1,1)$}};
\node at (-3.7,-2.25) {\scalebox{.3}{$\yng(1,1,1)$}};
\node at (-3,1.75) {{$\varnothing$}};

\end{tikzpicture}
&
    \begin{tabular}{c|c c c c c c c c c c}
    & \scalebox{.2}{$\yng(3)$} & \scalebox{.2}{$\yng(3,3)$} & \scalebox{.2}{$\yng(3,3,3)$} & \scalebox{.2}{$\yng(2,2,2)$} & \scalebox{.2}{$\yng(1,1,1)$} &  & \scalebox{.2}{$\yng(2)$} & \scalebox{.2}{$\yng(2,1)$} & \scalebox{.2}{$\yng(3,3,2)$} & \scalebox{.2}{$\yng(1,1)$} \\ \hline
    $\bar p_{123} $  & 0 & 0 & 0 & 0 & 0 & & 0 & 0 & 0 & 0 \\
    $\bar p_{124} $  & 0 & 0 & 1 & 0 & 0 & & 0 & 0 & 0 & 0 \\
    $\bar p_{125} $  & 0 & 0 & 1 & 1 & 0 & & 0 & 0 & 1 & 0 \\
    $\bar p_{126} $  & 0 & 0 & 1 & 1 & 1 & & 0 & 0 & 1 & 0 \\
    $\bar p_{134} $  & 0 & 1 & 1 & 0 & 0 & & 0 & 0 & 1 & 0 \\
    $\bar p_{135} $  & 0 & 1 & 1 & 1 & 0 & & 0 & 0 & 1 & 0 \\
    $\bar p_{136} $  & 0 & 1 & 1 & 1 & 1 & & 0 & 0 & 1 & 0 \\
    $\bar p_{145} $  & 0 & 1 & 2 & 1 & 0 & & 0 & 0 & 1 & 0 \\
    $\bar p_{146} $  & 0 & 1 & 2 & 1 & 1 & & 0 & 0 & 1 & 0 \\
    $\bar p_{156} $  & 0 & 1 & 2 & 2 & 0 & & 0 & 1 & 2 & 1 \\
    $\bar p_{234} $  & 1 & 1 & 1 & 0 & 0 & & 0 & 0 & 1 & 0 \\
    $\bar p_{235} $  & 1 & 1 & 1 & 1 & 0 & & 0 & 0 & 1 & 0 \\
    $\bar p_{236} $  & 1 & 1 & 1 & 1 & 1 & & 0 & 0 & 1 & 0 \\
    $\bar p_{245} $  & 1 & 1 & 2 & 1 & 0 & & 0 & 0 & 1 & 0 \\
    $\bar p_{246} $  & 1 & 1 & 2 & 1 & 1 & & 0 & 0 & 1 & 0 \\
    $\bar p_{256} $  & 1 & 1 & 2 & 2 & 1 & & 0 & 1 & 2 & 1 \\
    $\bar p_{345} $  & 1 & 2 & 2 & 1 & 0 & & 1 & 1 & 2 & 0 \\
    $\bar p_{346} $  & 1 & 2 & 2 & 1 & 1 & & 1 & 1 & 2 & 0 \\
    $\bar p_{356} $  & 1 & 2 & 2 & 2 & 1 & & 1 & 1 & 2 & 1 \\
    $\bar p_{456} $  & 1 & 2 & 3 & 2 & 1 & & 1 & 1 & 2 & 1 
    \end{tabular} 
\end{tabular}    
    \caption{the plabic graph $G$ for $\Gr_3(\mathbb C^6)$ for which $\Delta(A_{3,6},\nu_G)$ is not integral (see \cite[\S9]{RW17}) and the images of Pl\"ucker coordinates under the valuation $\val_{G}$ as in Example~\ref{exp:bad plabic val}.}
    \label{tab:3-6}
\end{table}

\subsection{Cluster embedding}
The Grassmannian $\Gr_3(\mathbb C^6)$ has a cluster structure that was first exhibited by Scott in \cite{Sco06}.
More precisely, the algebra $A_{3,6}$ is a cluster algebra which roughly means that is can be constructed recursively from particular sets of maximal algebraically independent elements, called \emph{seed} by a combinatorial procedure called \emph{mutation} \cite{FZ02}. 
The elements of the seeds are called the \emph{cluster variables}.
There are 22 of them for $A_{3,6}$, 20 of which are Plücker coordinates together with two more generators of degree two denoted by $X$ and $Y$ that are binomials in Plücker coordinates.
In particular, $Y$ agrees with the element $\bar p_{124}\bar p_{356}-\bar p_{123}\bar p_{456}$ from Example~\ref{exp:bad plabic val}.

We change the representation of $A_{3,6}$ to be $\mathbb C[p_{ijk},X,Y:1\le i<j<k\le 6]/J_{3,6}$, where the ideal $J_{3,6}$ is minimally generated by the following 37 quadratic polynomials:
{\small
\begin{align*}
p_{145}p_{236} - {p_{123}}{p_{456}} - X,\quad\quad \quad \quad & \quad
p_{124}p_{356} - {p_{123}}{p_{456}} - Y,\quad\quad \quad\quad & 
p_{136}p_{245} - {p_{126}}{p_{345}} - X,\quad \quad\quad \\
p_{125}p_{346} - {p_{126}}{p_{345}} - Y,\quad \quad\quad \quad & \quad
p_{146}p_{235} - {p_{156}}{p_{234}} - X,\quad \quad\quad \quad & 
p_{134}p_{256} - {p_{156}}{p_{234}} - Y,\quad\quad \quad \\
p_{246}p_{356} - p_{346}p_{256} - p_{236}{p_{456}}, \quad& \quad
p_{245}p_{356} - {p_{345}}p_{256} - p_{235}{p_{456}},\quad&
p_{146}p_{356} - p_{346}{p_{156}} - p_{136}{p_{456}}, \\ 
p_{145}p_{356} - {p_{345}}{p_{156}} - p_{135}{p_{456}},\quad& \quad
p_{245}p_{346} - {p_{345}}p_{246} - {p_{234}}{p_{456}}, \quad& 
p_{235}p_{346} - {p_{345}}p_{236} - {p_{234}}p_{356},\\ 
p_{145}p_{346} - {p_{345}}p_{146} - p_{134}{p_{456}}, \quad& \quad
p_{135}p_{346} - {p_{345}}p_{136} - p_{134}p_{356},\quad &
p_{146}p_{256} - p_{246}{p_{156}} - {p_{126}}{p_{456}}, \\
p_{145}p_{256} - p_{245}{p_{156}} - p_{125}{p_{456}},\quad&\quad
p_{136}p_{256} - p_{236}{p_{156}} - {p_{126}}p_{356}, \quad& 
p_{135}p_{256} - p_{235}{p_{156}} - p_{125}p_{356},\\ 
p_{235}p_{246} - p_{245}p_{236} - {p_{234}}p_{256}, \quad& \quad
p_{145}p_{246} - p_{245}p_{146} - p_{124}{p_{456}},\quad&
p_{136}p_{246} - p_{236}p_{146} - {p_{126}}p_{346}, \\
p_{134}p_{246} - {p_{234}}p_{146} - p_{124}p_{346},\quad&\quad
p_{125}p_{246} - p_{245}{p_{126}} - p_{124}p_{256}, \quad& 
p_{134}p_{245} - {p_{234}}p_{145} - p_{124}{p_{345}},\\ 
p_{135}p_{245} - p_{235}p_{145} - p_{125}{p_{345}}, \quad& \quad
p_{135}p_{236} - p_{235}p_{136} - {p_{123}}p_{356},\quad&
p_{134}p_{236} - {p_{234}}p_{136} - {p_{123}}p_{346}, \\
p_{125}p_{236} - p_{235}{p_{126}} - {p_{123}}p_{256},\quad& \quad
p_{124}p_{236} - {p_{234}}{p_{126}} - {p_{123}}p_{246}, \quad& 
p_{134}p_{235} - {p_{234}}p_{135} - {p_{123}}{p_{345}},\\
p_{124}p_{235} - {p_{234}}p_{125} - {p_{123}}p_{245}, \quad& \quad
p_{135}p_{146} - p_{145}p_{136} - p_{134}{p_{156}},\quad&
p_{125}p_{146} - p_{145}{p_{126}} - p_{124}{p_{156}}, \\
p_{125}p_{136} - p_{135}{p_{126}} - {p_{123}}{p_{156}},\quad& \quad
p_{124}p_{136} - p_{134}{p_{126}} - {p_{123}}p_{146}, \quad& 
p_{124}p_{135} - p_{134}p_{125} - {p_{123}}p_{145},
\end{align*}
\begin{displaymath}
p_{135}p_{246} - {p_{156}}{p_{234}} - Y - {p_{123}}{p_{456}} - X  - {p_{126}}{p_{345}}.
\end{displaymath}}

The tropicalization of the ideal $J_{3,6}$ is not known completely, but the intersection of the tropicalization with a specific maximal cone in its Gröbner fan was computed in \cite[\S4.4]{BMN}. We summarize their findings.

Let $\{e_{123},\dots,e_{456},e_x,e_y\}$ denote the standard basis of $\mathbb R^{22}$ we define (by slight abuse of notation) the elements $f_{ij}$ and $g_{abcdef}$ in $\mathbb R^{22}$ as the same linear combinations of standard basis elements as in $\mathbb R^20$. 
Then the lineality space $\mathcal L_{J_{3,6}}$ is six-dimensional and spanned by
\[
E_i:=\sum_{k,j\not=i}e_{ijk}+e_x+e_y.
\]
The ideal $J_{3,6}$ is invariant under the action of the group $\Sigma:=\langle(123456),(16)(25)(34)\rangle\subset S_6$.
In particular, this action translates to an action on $\trop(J_{3,6})$.

\begin{Theorem}[\S4.4 in \cite{BMN}]
There is a distinguished maximal simplicial cone $C\in \GF(J_{3,6})$ that is invariant under the action of $\Sigma$ with ray generators 
\begin{enumerate}
    \item 6 of form $e_{i,i+1,i+2}$ for $i\in \mathbb Z_6$;
    \item 6 of form $f_{i,i+1}+\left\{\begin{matrix}e_y& i \text{ odd}\\e_x& i \text{ evem}\end{matrix}\right.$;
    \item 2 of form $g_{123456}+e_y$ and $g_{456123}+e_x$;
    \item 2 of form $g_{654321}+e_y$ and $g_{321654}+e_x$.
\end{enumerate}
All $16$ rays are also rays of $\trop(J_{3,6})$ (more precisely of its totally positive part).
The intersection $C\cap \trop(J_{3,6})$ contains 50 maximal simplicial cones of $\trop(J_{3,6})$ that are all prime. Specifically we find
\begin{enumerate}
    \item 6 in the $\Sigma$-orbits of $\{e_{123},e_{156},f_{23}+e_x,f_{56}+e_y\}$ projecting onto cones of type EEFF1;
    \item 12 in the $\Sigma$-orbits of $\{e_{123},e_{456},f_{12}+e_y,f_{56}+e_y\}$ and $\{f_{34}+e_y,f_{16}+e_x,e_{156},e_{234}\}$ projecting onto cones of type EEFF2;
    \item 12 in the $\Sigma$-orbits of $\{e_{123},g_{321654}+e_x,f_{23}+e_x,f_{45}+e_x\}$ and $\{f_{34}+e_y,g_{123456}+e_y,f_{12}+e_y,e_{345}\}$ projecting onto cones of type EFFG;
    \item 4 in the $\Sigma$-orbit of $\{e_{123},g_{123456}+e_y,e_{156},e_{345}\}$ projecting onto cones of type EEEG;
    \item 12 in the $\Sigma$-orbits of $\{g_{456123}+e_x,e_{456},f_{23}+e_x,e_{234}\}$ and $\{g_{654321}+e_y,f_{56}+e_y,e_{456},e_{234}\}$ projecting onto cones of type EEFG;
    \item 4 in the $\Sigma$-orbit of $\{f_{45}+e_x,f_{16}+e_x,f_{23}+e_x,g_{321654}+e_x\}$ projecting onto FFFG type pyramids inside the maximal cones of bipyramid type FFFGG.
\end{enumerate}
Here the projections refer to the projection $\mathbb R^{22}\supset\trop(J_{3,6})\to \trop(I_{3,6})\subset\mathbb R^{20}$.
\end{Theorem}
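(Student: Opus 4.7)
The plan is to proceed by direct computation, using the $\Sigma$-symmetry to reduce the bookkeeping. First I would identify the distinguished cone $C$: a natural candidate is the cone containing an interior weight vector $w_{0}$ built as a $\Sigma$-symmetric positive combination of the sixteen proposed ray generators. I would then compute a reduced Gr\"obner basis of $J_{3,6}$ with respect to $w_{0}$ and verify that $\init_{w_{0}}(J_{3,6})$ is a monomial ideal, which confirms $w_{0}\in C^{\circ}$ for some maximal cone $C$ of $\GF(J_{3,6})$. From the Newton polytope of the Gr\"obner basis I would read off the rays of $C$ modulo the six-dimensional lineality $\mathcal L_{J_{3,6}}$. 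The $\Sigma$-invariance of $C$ follows from the $\Sigma$-invariance of $w_0$ and $J_{3,6}$, so the verification of the full list of $16$ rays reduces to checking the four representatives $e_{123}$, $f_{12}+e_{y}$, $g_{123456}+e_{y}$, $g_{654321}+e_{y}$.

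Next, to produce the fifty maximal prime cones of $\trop(J_{3,6})$ inside $C$, I would pick a weight vector in the relative interior of each orbit representative from the six listed families, compute $\init_{\sigma}(J_{3,6})$, check that it is generated by binomials (and contains no monomials, hence lies in $\trop(J_{3,6})$), and then verify primality, for instance by the Sturmfels criterion on toric ideals or by direct computation in \texttt{Macaulay2}. Summing the orbit sizes $6+12+12+4+12+4=50$ yields the count, and composition with the projection $\pi\colon \mathbb R^{22}\to \mathbb R^{20}$ forgetting the $X,Y$-coordinates identifies each of them with a Speyer--Sturmfels cone of the prescribed type, since the binomials $X=p_{145}p_{236}-p_{123}p_{456}$ and $Y=p_{124}p_{356}-p_{123}p_{456}$ (and the other two pairs) prescribe the values of the $X,Y$ weights in terms of the Pl\"ucker weights.

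The hard part is \emph{completeness}: ruling out any additional maximal prime cone inside $C\cap \trop(J_{3,6})$. My preferred strategy is a fiber analysis along $\pi$. Any maximal prime cone $\tau\subset C\cap \trop(J_{3,6})$ projects to a cone in $\trop(I_{3,6})$, so it suffices to enumerate, for each Speyer--Sturmfels cone contained in $\pi(C)$, the lifts in the $(e_{x},e_{y})$-direction that both lie in $C$ and remain prime. For the simplicial cone types EEEE through EEFG the three binomial relations defining $X$ and $Y$ (and the symmetric partners) pin down the $e_{x}$- and $e_{y}$-weights up to the ambiguity already recorded by the parity of the rays $f_{i,i+1}+e_{x/y}$, giving at most the lifts listed; for the non-prime bipyramid FFFGG the same binomials force a subdivision into the claimed FFFG pyramids since otherwise the initial forms of the two $X,Y$-defining binomials would collapse to monomials. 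Combined with the fifty cones constructed above, this fiber-by-fiber accounting closes the enumeration.

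The main obstacle is therefore the exhaustiveness of this fiber analysis: one must carefully keep track, for each Speyer--Sturmfels cone type in $\pi(C)$, of the admissible $X,Y$-valuations, and verify that no additional prime cone arises from a coincidental cancellation in the non-Pl\"ucker generators of $J_{3,6}$. In practice I would corroborate the argument by a direct Gfan traversal of the subfan of $\GF(J_{3,6})$ supported on $C$, which, given the explicit list of $16$ rays and the $\Sigma$-action, is computationally feasible and provides an independent check of the fifty prime cones.
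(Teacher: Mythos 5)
The survey does not reprove this statement: it is quoted from \S4.4 of \cite{BMN}, where it is established by explicit computer calculation (Gr\"obner bases and initial ideals computed for the cone $C$ and its faces, with the $\Sigma$-action used to cut the work down to orbit representatives). Your plan is the same in spirit --- identify $C$ from a $\Sigma$-symmetric weight vector, verify the $16$ rays on orbit representatives, and certify the $50$ cones by checking that their initial ideals are monomial-free, binomial and prime --- so on the constructive half you are aligned with the source. Where you genuinely diverge is the completeness step. Your primary argument, a fiber analysis along $\pi\colon\mathbb R^{22}\to\mathbb R^{20}$, has a soft spot you should not lean on: the generators of $J_{3,6}$ involving $X$ and $Y$ are \emph{trinomials} such as $p_{145}p_{236}-p_{123}p_{456}-X$ (not binomials), so for $w\in\trop(J_{3,6})$ they only force either a tie between the two Pl\"ucker terms or equality of $w_x$ (resp.\ $w_y$) with their maximum; the $X,Y$-weights are therefore constrained to intervals with prescribed tie patterns rather than ``pinned down'', and it is exactly such coincidental ties that could produce unlisted prime cones --- the very possibility you are trying to exclude. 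Moreover the image of a maximal cone of $\trop(J_{3,6})$ under $\pi$ lands in the support of $\trop(I_{3,6})$ but need not a priori be a maximal cone of the Speyer--Sturmfels fan, so the ``for each cone downstairs, enumerate lifts'' bookkeeping needs more care than you give it.

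The clean route --- and effectively what the cited computation does --- is the one you relegate to a corroborating check: since $\trop(J_{3,6})$, with its Gr\"obner fan structure, is a subfan of $\GF(J_{3,6})$, any maximal cone of $\trop(J_{3,6})$ contained in $C$ is a face of $C$; as $C$ is simplicial with $16$ rays modulo the six-dimensional lineality space, these faces are spanned by $4$-element subsets of the rays, so completeness is a finite check of $\binom{16}{4}=1820$ quadruples (reduced further by $\Sigma$), each tested for monomial-freeness and primality of its initial ideal. If you promote that enumeration from ``independent check'' to the actual completeness argument, and keep the fiber analysis only as a sanity check on the projection types EEFF1 through FFFG, your proof is correct and matches the source; as written, the burden of exhaustiveness rests on the weaker fiber argument.
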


The ideal $J_{3,6}$ has the following advantage over the ideal $I_{3,6}$:

\begin{Corollary}
The tropicalization $\trop(J_{3,6})$ contains maximal prime cones associated with all Khovanskii-finite valuations on  $A_{3,6}$ for which  $\{p_{123},\dots,p_{456},X,Y\}$ is a Khovanskii basis. In particular, this includes all valuations associated with plabic graphs for $\Gr_3(\mathbb C^6)$.
\end{Corollary}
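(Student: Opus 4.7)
The plan is to deduce the corollary directly from Theorem~\ref{thm:val and trop}, then to verify that its hypothesis is indeed satisfied by plabic graph valuations.

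For the first sentence, let $\nu : A_{3,6}\setminus\{0\} \to \mathbb Z^d$ be any Khovanskii-finite valuation admitting $\mathcal B := \{p_{123},\dots,p_{456},X,Y\}$ as a Khovanskii basis. By construction, the presentation of $A_{3,6}$ induced by $\mathcal B$ is the surjection $\mathbb C[p_{ijk},X,Y] \twoheadrightarrow A_{3,6}$ with kernel $J_{3,6}$, since $J_{3,6}$ was defined precisely as the ideal of relations among the $p_{ijk}$, $X$, and $Y$. Applying Theorem~\ref{thm:val and trop} to $\nu$ with this presentation yields a maximal prime cone $\tau \in \trop(J_{3,6})$ with $\nu = \nu_\tau$ and $\gr_\nu(A_{3,6}) \cong \mathbb C[p_{ijk},X,Y]/\init_\tau(J_{3,6})$.

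For the ``in particular'' clause, I need to check that every Rietsch--Williams valuation $\nu_G$ attached to a plabic graph $G$ for $\Gr_3(\mathbb C^6)$ meets the hypothesis. That $\nu_G$ is full rank, homogeneous and has finitely generated value semigroup is standard \cite{RW17,Bos20_fullrank}, so what must be verified is that $\mathcal B$ is a Khovanskii basis for $\nu_G$. Example~\ref{exp:bad plabic val} already explains why this is subtle: Plücker coordinates alone can fail because $\nu_G(p_{124}p_{356}-p_{123}p_{456})$, i.e.\ $\nu_G(Y)$, produces an extremal ray not spanned by the $\nu_G(p_{ijk})$. The corollary asserts that including $X$ and $Y$ always repairs this defect. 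I would argue this by invoking the cluster structure on $A_{3,6}$: the $22$ elements of $\mathcal B$ are precisely the cluster variables of $A_{3,6}$, and every plabic graph valuation $\nu_G$ is compatible with the cluster structure in the sense that the finite set $\{\nu_G(\theta) : \theta \in \mathcal B\}$ spans all extremal rays of the Newton--Okounkov cone $C(A_{3,6},\nu_G)$.

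The main obstacle is exactly this last claim. One concrete route is a case analysis using the classification of the $50$ maximal prime cones of $\trop(J_{3,6}) \cap C$ from the preceding theorem: after identifying the totally positive plabic graph valuations with specific cones on that list, up to the symmetry action of $\Sigma$, one reads off directly that the images of $\mathcal B$ generate the corresponding value semigroups. A more conceptual route is to appeal to a structural result that plabic graph valuations for $\Gr_3(\mathbb C^6)$ send each cluster variable to a vertex of the associated Newton--Okounkov polytope, thereby reducing finite generation of $S(A_{3,6},\nu_G)$ to generation by the $22$ cluster variables. Once such a Khovanskii basis statement is secured, the first step delivers the corollary without further work.
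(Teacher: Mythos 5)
Your reduction of the first sentence to Theorem~\ref{thm:val and trop} is exactly the paper's (implicit) argument: $J_{3,6}$ is by definition the kernel of the presentation of $A_{3,6}$ by the $22$ cluster variables, so any Khovanskii-finite valuation admitting $\mathcal B=\{p_{123},\dots,p_{456},X,Y\}$ as Khovanskii basis yields a maximal prime cone $\tau\in\trop(J_{3,6})$ with $\nu=\nu_\tau$. That half is correct and matches the paper.

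The gap is in your treatment of the ``in particular'' clause. The needed input --- that every plabic graph valuation $\nu_G$ on $A_{3,6}$ has $\mathcal B$ as a Khovanskii basis --- is not proved in the paper either (it is imported from \cite[\S 9]{RW17}, \cite[\S 5]{Bos20_fullrank} and \cite[\S 4.4]{BMN}), but the justifications you sketch would not establish it. For a full-rank valuation with one-dimensional leaves, being a Khovanskii basis means the values $\nu_G(\theta)$, $\theta\in\mathcal B$, generate the value \emph{semigroup} $S(A_{3,6},\nu_G)$, equivalently their images generate $\gr_{\nu_G}(A_{3,6})$ (cf.\ \cite[Lemma 2.10]{KM16}). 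Your two surrogate criteria --- that $\{\nu_G(\theta):\theta\in\mathcal B\}$ spans the extremal rays of the Newton--Okounkov cone, or that each cluster variable maps to a vertex of the Newton--Okounkov polytope --- are strictly weaker: value semigroups in this setting need not be saturated (the paper's Example~\ref{example:val from trop} is exactly such a non-saturated semigroup), so cone- or polytope-level information cannot imply semigroup generation. Your ``concrete route'' also risks circularity: matching $\nu_G$ to one of the $50$ listed cones of $C\cap\trop(J_{3,6})$ via Theorem~\ref{thm:val and trop} presupposes the very Khovanskii-basis property you are trying to verify, so the identification has to come from the external comparison of $\nu_G$ with the Gr\"obner-theoretic valuation (as in \cite[\S 5]{Bos20_fullrank}, \cite[\S 4.4--4.6]{BMN}), at which point the clause is a citation rather than an argument. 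In short: keep your first step, but for the second either cite these results explicitly or prove semigroup generation directly; the convex-geometric formulation you propose does not suffice.
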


In particular, in $\trop(J_{3,6})$ we find a maximal prime cone associated with the valuation from Example~\ref{exp:bad plabic val}. It is identified with the cone whose rays are $\{f_{45}+e_x,f_{16}+e_x,f_{23}+e_x,g_{456123}+e_x\}$.

\footnotesize{
\newcommand{\etalchar}[1]{$^{#1}$}

}


\begin{thebibliography}{BCFKvS00}

\bibitem[AB05]{AB04}
Valery Alexeev and Michel Brion.
\newblock Toric degenerations of spherical varieties.
\newblock {\em Selecta Mathematica}, 10(4):453--478, 2005.

\bibitem[ACGK12]{ACGK_mutation}
Mohammad Akhtar, Tom Coates, Sergey Galkin, and Alexander~M. Kasprzyk.
\newblock Minkowski polynomials and mutations.
\newblock {\em SIGMA Symmetry Integrability Geom. Methods Appl.}, 8:Paper 094,
  17, 2012.

\bibitem[And13]{An13}
Dave Anderson.
\newblock Okounkov bodies and toric degenerations.
\newblock {\em Mathematische Annalen}, 356(3):1183--1202, 2013.

\bibitem[Bat04]{Batyrev_toric}
Victor~V. Batyrev.
\newblock Toric degenerations of {F}ano varieties and constructing mirror
  manifolds.
\newblock In {\em The {F}ano {C}onference}, pages 109--122. Univ. Torino,
  Turin, 2004.

\bibitem[Bay82]{Bay82}
David~Allen Bayer.
\newblock {\em The division algorithm and the {H}ilbert scheme}.
\newblock ProQuest LLC, Ann Arbor, MI, 1982.
\newblock Thesis (Ph.D.)--Harvard University.

\bibitem[BCFKvS00]{BCFvSK}
Victor~V. Batyrev, Ionu\c{t} Ciocan-Fontanine, Bumsig Kim, and Duco van
  Straten.
\newblock Mirror symmetry and toric degenerations of partial flag manifolds.
\newblock {\em Acta Math.}, 184(1):1--39, 2000.

\bibitem[Ber17]{Daniel_trees}
Daniel~Irving Bernstein.
\newblock Completion of tree metrics and rank 2 matrices.
\newblock {\em Linear Algebra Appl.}, 533:1--13, 2017.

\bibitem[BFF{\etalchar{+}}18]{BFFHL}
L.~Bossinger, X.~Fang, G.~Fourier, M.~Hering, and M.~Lanini.
\newblock Toric degenerations of {${\rm Gr}(2, n)$} and {${\rm Gr}(3, 6)$} via
  plabic graphs.
\newblock {\em Ann. Comb.}, 22(3):491--512, 2018.

\bibitem[BFMN20]{BFMN}
Lara Bossinger, Bosco {Fr\'{\i}as-Medina}, Timothy Magee, and Alfredo
  {N\'{a}jera Ch\'{a}vez}.
\newblock Toric degenerations of cluster varieties and cluster duality.
\newblock {\em Compos. Math.}, 156(10):2149--2206, 2020.

\bibitem[BG09]{BrunsGubeladze}
Winfried {Bruns} and Joseph {Gubeladze}.
\newblock {\em {Polytopes, rings, and K-theory}}.
\newblock New York, NY: Springer, 2009.

\bibitem[BJS{\etalchar{+}}07]{ComputingTrop_BJSST}
T.~Bogart, A.~N. Jensen, D.~Speyer, B.~Sturmfels, and R.~R. Thomas.
\newblock Computing tropical varieties.
\newblock {\em J. Symbolic Comput.}, 42(1-2):54--73, 2007.

\bibitem[BLMM17]{BLMM}
Lara Bossinger, Sara Lamboglia, Kalina Mincheva, and Fatemeh Mohammadi.
\newblock Computing toric degenerations of flag varieties.
\newblock In {\em Combinatorial algebraic geometry}, volume~80 of {\em Fields
  Inst. Commun.}, pages 247--281. Fields Inst. Res. Math. Sci., Toronto, ON,
  2017.

\bibitem[BMNC21]{BMN}
L.~Bossinger, F.~Mohammadi, and A.~N\'ajera-Ch\'avez.
\newblock Families of {G}r\"obner degenerations, {G}rassmannians and universal
  cluster algebras.
\newblock {\em SIGMA}, 17(059):46, 2021.

\bibitem[Bos21a]{B-birat}
Lara Bossinger.
\newblock Birational sequences and the tropical {G}rassmannian.
\newblock {\em J. Algebra}, 585:784--803, 2021.

\bibitem[Bos21b]{Bos20_fullrank}
Lara Bossinger.
\newblock {Full-Rank Valuations and Toric Initial Ideals}.
\newblock {\em International Mathematics Research Notices}, 04 2021.
\newblock rnaa071.

\bibitem[BSW20]{BSW_numerical}
Micheal Burr, Frank Sottile, and Elise Walker.
\newblock Numerical homotopies from khovanskii bases.
\newblock {\em arXiv:2008.13055 [math.AG]}, 2020.

\bibitem[Cal02]{Cal02}
Philippe Caldero.
\newblock Toric degenerations of {S}chubert varieties.
\newblock {\em Transformation groups}, 7(1):51--60, 2002.

\bibitem[CLO15]{CLO_commut}
David~A. Cox, John Little, and Donal O'Shea.
\newblock {\em Ideals, varieties, and algorithms}.
\newblock Undergraduate Texts in Mathematics. Springer, Cham, fourth edition,
  2015.
\newblock An introduction to computational algebraic geometry and commutative
  algebra.

\bibitem[CM97]{Collart_toricDeg}
St\'{e}phane Collart and Daniel Mall.
\newblock Toric degenerations of polynomial ideals and geometric localization
  of fans.
\newblock {\em J. Symbolic Comput.}, 24(3-4):443--464, 1997.
\newblock Computational algebra and number theory (London, 1993).

\bibitem[Dol82]{Dolgachev_wtProjVar}
Igor Dolgachev.
\newblock Weighted projective varieties.
\newblock In {\em Group actions and vector fields ({V}ancouver, {B}.{C}.,
  1981)}, volume 956 of {\em Lecture Notes in Math.}, pages 34--71. Springer,
  Berlin, 1982.

\bibitem[EH20]{EH-NObodies}
L.~Escobar and M.~Harada.
\newblock {Wall-crossing for {N}ewton–{O}kounkov bodies and the tropical
  {G}rassmannian}.
\newblock {\em Int. Math. Res. Not.}, page rnaa230, 09 2020.

\bibitem[Eis13]{Eisenbud}
David Eisenbud.
\newblock {\em Commutative Algebra: with a view toward algebraic geometry},
  volume 150.
\newblock Springer Science \& Business Media, 2013.

\bibitem[EKL06]{EinsiedlerKapronovLind_dimTrop}
Manfred Einsiedler, Mikhail Kapranov, and Douglas Lind.
\newblock Non-{A}rchimedean amoebas and tropical varieties.
\newblock {\em J. Reine Angew. Math.}, 601:139--157, 2006.

\bibitem[FFL11]{FFL_PBWtypeA}
Evgeny Feigin, Ghislain Fourier, and Peter Littelmann.
\newblock {PBW} filtration and bases for irreducible modules in type {$A_n$}.
\newblock {\em Transformation Groups}, 16(1):71--89, 2011.

\bibitem[FFL17]{FFL_birat}
Xin Fang, Ghislain Fourier, and Peter Littelmann.
\newblock Essential bases and toric degenerations arising from birational
  sequences.
\newblock {\em Adv. Math.}, 312:107--149, 2017.

\bibitem[FLP18]{FLP}
Xin Fang, Peter Littelmann, and Milena Pabiniak.
\newblock Simplices in {N}ewton-{O}kounkov bodies and the {G}romov width of
  coadjoint orbits.
\newblock {\em Bull. Lond. Math. Soc.}, 50(2):202--218, 2018.

\bibitem[FOOO11]{FOOO_toric}
Kenji Fukaya, Yong-Geun Oh, Hiroshi Ohta, and Kaoru Ono.
\newblock {Toric Degeneration and Nondisplaceable Lagrangian Tori in S2×S2}.
\newblock {\em International Mathematics Research Notices},
  2012(13):2942--2993, 06 2011.

\bibitem[FR16]{FR_Hahn_higherRank}
Tyler Foster and Dhruv Ranganathan.
\newblock Hahn analytification and connectivity of higher rank tropical
  varieties.
\newblock {\em Manuscripta Math.}, 151(3-4):353--374, 2016.

\bibitem[FZ02]{FZ02}
Sergey Fomin and Andrei Zelevinsky.
\newblock Cluster algebras. {I}. {F}oundations.
\newblock {\em J. Amer. Math. Soc.}, 15(2):497--529, 2002.

\bibitem[GHK15]{GHK_birat}
Mark Gross, Paul Hacking, and Sean Keel.
\newblock Birational geometry of cluster algebras.
\newblock {\em Algebr. Geom.}, 2(2):137--175, 2015.

\bibitem[GHKK18]{GHKK14}
Mark Gross, Paul Hacking, Sean Keel, and Maxim Kontsevich.
\newblock Canonical bases for cluster algebras.
\newblock {\em J. Amer. Math. Soc.}, 31(2):497--608, 2018.

\bibitem[Giv97]{Giv_Toda}
Alexander Givental.
\newblock Stationary phase integrals, quantum {T}oda lattices, flag manifolds
  and the mirror conjecture.
\newblock In {\em Topics in singularity theory}, volume 180 of {\em Amer. Math.
  Soc. Transl. Ser. 2}, pages 103--115. Amer. Math. Soc., Providence, RI, 1997.

\bibitem[GL96]{GL96}
Nicolae Gonciulea and Venkatramani Lakshmibai.
\newblock Degenerations of flag and {S}chubert varieties to toric varieties.
\newblock {\em Transformation Groups}, 1(3):215--248, 1996.

\bibitem[GS11]{GrossSiebert_toric}
Mark Gross and Bernd Siebert.
\newblock An invitation to toric degenerations.
\newblock In {\em Surveys in differential geometry. {V}olume {XVI}. {G}eometry
  of special holonomy and related topics}, volume~16 of {\em Surv. Differ.
  Geom.}, pages 43--78. Int. Press, Somerville, MA, 2011.

\bibitem[HH11]{HH_monomial}
J\"{u}rgen Herzog and Takayuki Hibi.
\newblock {\em Monomial ideals}, volume 260 of {\em Graduate Texts in
  Mathematics}.
\newblock Springer-Verlag London, Ltd., London, 2011.

\bibitem[HJL{\etalchar{+}}09]{Hoew_Branching}
Roger Howe, Steven Jackson, Soo~Teck Lee, Eng-Chye Tan, and Jeb Willenbring.
\newblock Toric degeneration of branching algebras.
\newblock {\em Adv. Math.}, 220(6):1809--1841, 2009.

\bibitem[HK14]{Hamilton_Kaehler}
Mark~D. Hamilton and Hiroshi Konno.
\newblock Convergence of {K}\"{a}hler to real polarizations on flag manifolds
  via toric degenerations.
\newblock {\em J. Symplectic Geom.}, 12(3):473--509, 2014.

\bibitem[HK15]{HK}
Megumi Harada and Kiumars Kaveh.
\newblock Integrable systems, toric degenerations and {O}kounkov bodies.
\newblock {\em Invent. Math.}, 202(3):927--985, 2015.

\bibitem[HP18]{HP}
Iva Halacheva and Milena Pabiniak.
\newblock The {G}romov width of coadjoint orbits of the symplectic group.
\newblock {\em Pacific J. Math.}, 295(2):403--420, 2018.

\bibitem[IW20]{IW20}
Nathan Ilten and Milena Wrobel.
\newblock Khovanskii-finite valuations, rational curves, and torus actions.
\newblock {\em J. Comb. Algebra}, 4(2):141--166, 2020.

\bibitem[Kav15]{Kaveh_crystal}
Kiumars Kaveh.
\newblock Crystal bases and {N}ewton-{O}kounkov bodies.
\newblock {\em Duke Math. J.}, 164(13):2461--2506, 2015.

\bibitem[Kav19]{Kaveh_toric}
Kiumars Kaveh.
\newblock Toric degenerations and symplectic geometry of smooth projective
  varieties.
\newblock {\em J. Lond. Math. Soc. (2)}, 99(2):377--402, 2019.

\bibitem[KK12]{KK12}
Kiumars Kaveh and A.~G. Khovanskii.
\newblock Newton-{O}kounkov bodies, semigroups of integral points, graded
  algebras and intersection theory.
\newblock {\em Ann. of Math. (2)}, 176(2):925--978, 2012.

\bibitem[KL17]{KueL}
Alex K\"{u}ronya and Victor Lozovanu.
\newblock Infinitesimal {N}ewton-{O}kounkov bodies and jet separation.
\newblock {\em Duke Math. J.}, 166(7):1349--1376, 2017.

\bibitem[KM05]{KM05}
Mikhail Kogan and Ezra Miller.
\newblock Toric degeneration of {S}chubert varieties and {G}elfand--{T}setlin
  polytopes.
\newblock {\em Advances in Mathematics}, 193(1):1--17, 2005.

\bibitem[KM19]{KM16}
Kiumars Kaveh and Christopher Manon.
\newblock Khovanskii bases, higher rank valuations, and tropical geometry.
\newblock {\em SIAM J. Appl. Algebra Geom.}, 3(2):292--336, 2019.

\bibitem[KMM17]{KMM_dgeneration-cx1}
Kiumars Kaveh, Christopher Manon, and Takuya Murata.
\newblock On degenerations of projective varieties to complexity-one
  t-varieties.
\newblock {\em arXiv preprint arXiv:1708.02698 [math.AG]}, 2017.

\bibitem[KMS15]{KMS_quasisymmetry}
Maria Kateri, Fatemeh Mohammadi, and Bernd Sturmfels.
\newblock A family of quasisymmetry models.
\newblock {\em J. Algebr. Stat.}, 6(1):1--16, 2015.

\bibitem[LM09]{LM09}
Robert Lazarsfeld and Mircea Musta\c{t}\u{a}.
\newblock Convex bodies associated to linear series.
\newblock {\em Ann. Sci. \'Ec. Norm. Sup\'er. (4)}, 42(5):783--835, 2009.

\bibitem[MR88]{MoraRobbiano}
Teo {Mora} and Lorenzo {Robbiano}.
\newblock {The Gr\"obner fan of an ideal}.
\newblock {\em {J. Symb. Comput.}}, 6(2-3):183--208, 1988.

\bibitem[MS18]{MoSh}
Fatemeh Mohammadi and Kristin Shaw.
\newblock Toric degenerations of {G}rassmannians from matching fields.
\newblock {\em arXiv preprint arXiv:1809.01026}, 2018.

\bibitem[Nis15]{Nishinou_Gromov-Witten}
Takeo Nishinou.
\newblock Toric degenerations, tropical curve, and {G}romov-{W}itten invariants
  of {F}ano manifolds.
\newblock {\em Canad. J. Math.}, 67(3):667--695, 2015.

\bibitem[NNU12]{NNU2}
Takeo Nishinou, Yuichi Nohara, and Kazushi Ueda.
\newblock Potential functions via toric degenerations.
\newblock {\em Proc. Japan Acad. Ser. A Math. Sci.}, 88(2):31--33, 2012.

\bibitem[Oko98]{Oko98}
Andre\u\i{} Okounkov.
\newblock Multiplicities and {N}ewton polytopes.
\newblock In {\em Kirillov's seminar on representation theory}, volume 181 of
  {\em Amer. Math. Soc. Transl. Ser. 2}, pages 231--244. Amer. Math. Soc.,
  Providence, RI, 1998.

\bibitem[RW19]{RW17}
K.~Rietsch and L.~Williams.
\newblock Newton-{O}kounkov bodies, cluster duality, and mirror symmetry for
  {G}rassmannians.
\newblock {\em Duke Math. J.}, 168(18):3437--3527, 2019.

\bibitem[Sco06]{Sco06}
Joshua~S. Scott.
\newblock Grassmannians and cluster algebras.
\newblock {\em Proc. London Math. Soc. (3)}, 92(2):345--380, 2006.

\bibitem[SS04]{SS04}
David Speyer and Bernd Sturmfels.
\newblock The tropical {G}rassmannian.
\newblock {\em Advances in Geometry}, 4(3):389--411, 2004.

\bibitem[Stu96]{Stu96}
Bernd Sturmfels.
\newblock {\em Gr\"{o}bner bases and convex polytopes}, volume~8 of {\em
  University Lecture Series}.
\newblock American Mathematical Society, Providence, RI, 1996.

\end{thebibliography}
\end{document}